\documentclass[12pt]{article}
\usepackage{amssymb,amsmath,amsthm,tikz,multirow, mathdots}
\usepackage{wasysym}
\usepackage{enumerate} 
\usepackage{hyperref} 
\usepackage [autostyle, english = british]{csquotes}
\usepackage [english]{babel}
\usepackage{mathtools}
\usepackage{bm}
\usepackage{authblk} 
\usepackage{geometry}

\usepackage{comment} 

\usetikzlibrary{calc,arrows, arrows.meta, math} 

\usetikzlibrary{shapes.geometric}


\newtheorem*{theorem*}{Theorem}

\theoremstyle{definition}
\newtheorem*{definition*}{Definition}
\newtheorem*{case*}{Case}
\newtheorem*{subcase*}{Subcase}
\newtheorem*{subsubcase*}{Subsubcase}

\theoremstyle{plain}
\newtheorem{thm}{Theorem}[section]
\newtheorem{lem}[thm]{Lemma}
\newtheorem{prop}[thm]{Proposition}

\theoremstyle{definition}

\theoremstyle{remark}

\numberwithin{equation}{section}

\newcommand{\newpart}{\subsubsection*}

\newcommand\floor[1]{\lfloor#1\rfloor} 
\newcommand{\AVC}{\text{AVC}} 
\newcommand\sbullet[1][.5]{\mathbin{\vcenter{\hbox{\scalebox{#1}{$\bullet$}}}}} 
\newcommand{\uline}{\rule[0pt]{10pt}{0.4pt}}

\newcommand{\quotes}[1]{``#1''} 

\newcommand{\arcThroughThreePoints}[4][]{
\coordinate (middle1) at ($(#2)!.5!(#3)$);
\coordinate (middle2) at ($(#3)!.5!(#4)$);
\coordinate (aux1) at ($(middle1)!1!90:(#3)$);
\coordinate (aux2) at ($(middle2)!1!90:(#4)$);
\coordinate (center) at ($(intersection of middle1--aux1 and middle2--aux2)$);
\draw[#1] 
 let \p1=($(#2)-(center)$),
      \p2=($(#4)-(center)$),
      \n0={veclen(\p1)},       
      \n1={atan2(\y1,\x1)}, 
      \n2={atan2(\y2,\x2)},
      \n3={\n2>\n1?\n2:\n2+360}
    in (#2) arc(\n1:\n3:\n0);
}

\newcommand{\bvert}{\vrule width 2pt}

\newcommand\Pentagon{\scaleobj{1}{\pentagon}}

\providecommand{\keywords}[1]{\noindent \textit{Keywords:} #1}
\providecommand{\subject}[1]{\noindent \textit{Mathematics Subject Classification:} #1}

\title{Dihedral Tilings of the Sphere by Regular Polygons and Quadrilaterals II: Regular Polygons with High Gonality and Rhombi}
\author[1]{Ho Man CHEUNG}
\author[2]{Hoi Ping LUK}
\affil[2]{The Hong Kong University of Science \& Technology} 
\affil[1]{email: hmcheungae@connect.ust.hk}
\affil[2]{email: hoi@connect.ust.hk}

\begin{document}
\maketitle

\begin{abstract} We classify the dihedral edge-to-edge tilings of the sphere by regular polygons with gonality $m\ge5$ and rhombi. \\

\keywords{Classification, Spherical tilings, Dihedral tilings, Spherical polygons, Division of spaces} \\

\subject{05B45, 52C20, 51M09, 51M20}
\end{abstract}

\section{Introduction}

This paper is the second of the series to classify dihedral tilings of the sphere, where one prototile is a regular polygon and the other is a rhombus. The two prototiles in the first \cite{luk2} of the series are one square and one rhombus. The two prototiles in this paper are one regular polygon ($m$-gon with $m\ge5$ and edge combination $x^m$ and angles $\alpha$) and one rhombus (with edge combination $x^4$ and angles $\beta,\gamma$). The prototiles are illustrated in Figure \ref{Fig-a5-a4-angles} where the regular polygon is shaded and the rhombus is unshaded. Throughout this paper, the shaded tiles are always regular $m$-gons. We assume that the degree of a vertex is $\ge3$.

\begin{figure}[h!] 
\centering
\begin{tikzpicture}

\tikzmath{
\s=1;
\r=0.8;
\g=4;
\th=360/\g;
\x=\r*cos(\th/2);
\R=0.8;
\G=5;
\ph=360/\G;
\hx=360/6;
}

\begin{scope}[]

\foreach \a in {0,...,3} {

\draw[rotate=\a*\th]
	(1.5*\th:\r) -- (0.5*\th:\r)
;

\node at (\th+\th*\a: 0.95*\r) {\small $x$}; 

}

\foreach \a in {0,1} {

\node at (1.5*\th+\a*2*\th:0.625*\r) {\small $\beta$};
\node at (0.5*\th+\a*2*\th:0.65*\r) {\small $\gamma$};

}

\end{scope}

\begin{scope}[xshift=3*\s cm] 

\fill[gray!40]
	(90:\R) -- (90+\ph:\R) -- (90+2*\ph:\R) -- (90+3*\ph:\R) -- (90+4*\ph:\R) -- cycle
;

\foreach \a in {0,...,4} {

\draw[rotate=\a*\ph]
	(90-1*\ph:\R) -- (90:\R)
;

\node at (90+\a*\ph:0.7*\R) {\small $\alpha$};

\node at (\th-0.5*\ph+\ph*\a: 1.15*\r) {\small $x$}; 

}

\end{scope}

\begin{scope}[xshift=6*\s cm] 

\fill[gray!40]
	(90:\R) -- (90+\hx:\R) -- (90+2*\hx:\R) -- (90+3*\hx:\R) -- (90+4*\hx:\R) -- (90+5*\hx:\R) -- cycle
;

\foreach \a in {0,...,5} {

\draw[rotate=\a*\hx]
	(90-1*\hx:\R) -- (90:\R)
;

\node at (90+\a*\hx:0.7*\R) {\small $\alpha$};

\node at (\th-0.5*\hx+\hx*\a: 1.15*\r) {\small $x$}; 

}

\end{scope}

\node at (8*\s, 0) {\Large $\cdots$};

\end{tikzpicture}
\caption{Rhombus and regular polygons}
\label{Fig-a5-a4-angles}
\end{figure}
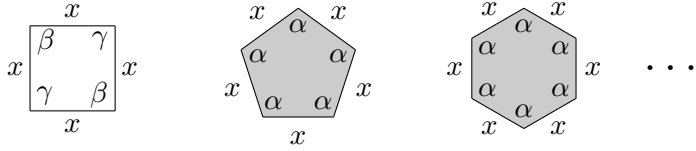


The main result is given below, where $f$ denotes the number of tiles. 

\begin{theorem*} The dihedral tilings of the sphere by regular polygons with gonality $m\ge5$ and rhombi are 
\begin{enumerate}[I.]
\item Earth map type: one infinite family of tilings with $f=8c-2$, where $c\ge2$ and $m=5$, and vertices $\{ \beta^2\gamma, \alpha\beta\gamma^c  \}$; 
\item Prism type: one infinite family of tilings with $f=m+2$, and vertex $\{ \alpha\beta\gamma \}$;
\item Archimedean type: 
\begin{itemize}
\item three triangular fusions of the snub dodecahedron with $m=5$, and $f=52$, and vertices $\{ \alpha\beta^2, \alpha\beta\gamma^2 \}$;
\item one quadrilateral subdivision of the truncated icosahedron with $m=5$, $f=72$, and vertices $\{ \beta^3, \alpha\beta\gamma^2 \}$.
\end{itemize}
\end{enumerate}
\end{theorem*}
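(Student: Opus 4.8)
The plan is to run the standard programme for classifying dihedral spherical tilings: extract the global numerical constraints, enumerate the admissible vertex types, enumerate the admissible anglewise vertex combinations (\AVC{}s), and then realise or exclude each one. I begin with the bookkeeping. Let $f_1$ be the number of regular $m$-gons and $f_2$ the number of rhombi, so $f=f_1+f_2$; edge-to-edge incidence gives $2E=mf_1+4f_2$, Euler's formula then yields $V=2+\tfrac{m-2}{2}f_1+f_2$, and the standing hypothesis $2E\ge 3V$ gives $\tfrac{6-m}{2}f_1+f_2\ge 6$, so rhombi are always present and, for $m\ge 6$, $f_2\ge 6+\tfrac{m-6}{2}f_1$. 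I also record that over all vertices $\alpha$ occurs exactly $mf_1$ times while $\beta$ and $\gamma$ each occur exactly $2f_2$ times (hence equally often), that each vertex has angle sum $2\pi$, and that positivity of areas forces $\tfrac{m-2}{m}\pi<\alpha<\pi$, $0<\beta,\gamma<\pi$, $\beta+\gamma>\pi$. Since $\beta\ne\gamma$ (the square is the prototile pair of Part I), I may relabel so that $\beta<\gamma$, whence $\gamma>\tfrac{\pi}{2}$.

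Next I enumerate the admissible vertex types $\alpha^{a}\beta^{b}\gamma^{c}$, i.e. the solutions of $a\alpha+b\beta+c\gamma=2\pi$ with $a+b+c\ge 3$ realisable in the above ranges for some $m\ge 5$. From $\alpha>\tfrac{m-2}{m}\pi$ one gets $a\le 3$, with $a\le 2$ as soon as $m\ge 6$ and $a=3$ possible only when $m=5$; from $\gamma>\tfrac{\pi}{2}$ one gets $c\le 3$; and $\beta+\gamma>\pi$ forbids a vertex carrying two $\beta$'s together with two $\gamma$'s, so $\min(b,c)\le 1$. This leaves a short master list, which I split into the $\alpha$-free types (essentially $\beta^{b}$, $\gamma^{3}$, $\beta^{b}\gamma$, $\beta\gamma^{2}$) and the $\alpha$-bearing ones (for $m\ge 6$ roughly $\alpha^{2}\gamma$, $\alpha^{2}\beta^{b}$, $\alpha\gamma^{2}$, $\alpha\beta^{b}$, $\alpha\beta^{b}\gamma$, with $\alpha^{3}$ and $\alpha^{3}\beta^{b}$ added when $m=5$); in particular no vertex carries $\alpha$ together with more than one $\gamma$.

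Then comes the \AVC{} analysis. The set of vertex types occurring in a tiling must survive three filters: (i) the union of their defining linear equations admits a solution $(\alpha,\beta,\gamma)$ in the admissible range for some $m\ge 5$ --- since three or more distinct such equations typically leave at most finitely many angle triples, most candidate collections are eliminated here, or collapse to $\beta=\gamma$, or force $m$ outside its range; (ii) the three angle-count identities together with Euler's formula must have a non-negative integer solution for $f_1$, $f_2$, $V$ and the multiplicity of each vertex type; (iii) an actual tiling must exist. I march through the survivors of (i)--(ii): for each, pin down $f_1$ and $f_2$ from (ii), then argue by propagation --- fix a rhombus, read off from the \AVC{} the possible types of its four corner-vertices, propagate around it and to its neighbours, and use that each $m$-gon contributes exactly $m$ copies of $\alpha$ all of which sit on $\alpha$-vertices --- which either yields a contradiction or pins down the global incidence structure, hence the tiling, uniquely (here I would also borrow the combinatorial lemmas of Part I). The expected outcome: for every $m\ge 6$ the only surviving \AVC{} is $\{\alpha\beta\gamma\}$, giving the prism-type family with $f=m+2$; and for $m=5$ one obtains in addition (after possibly re-interchanging $\beta$ and $\gamma$) $\{\alpha\beta^{2},\alpha\beta\gamma^{2}\}$ with $f_1=12$, $f_2=40$, $f=52$ (the triangular fusions of the snub dodecahedron), $\{\beta^{3},\alpha\beta\gamma^{2}\}$ with $f_1=12$, $f_2=60$, $f=72$ (the quadrilateral subdivision of the truncated icosahedron), and $\{\beta^{2}\gamma,\alpha\beta\gamma^{c}\}$ (the earth-map family, indexed by $c$). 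Realisability of these survivors I settle by explicit construction: the prism by joining two regular $m$-gon caps with a twisted ring of $m$ rhombi and checking that the single closing-up equation is solvable for each $m$; the two Archimedean families by building the spherical tilings combinatorially equivalent, respectively, to the snub dodecahedron with suitable pairs of triangular faces fused into rhombi and to the truncated icosahedron with each hexagonal face cut into three quadrilaterals, and verifying that these close up with the required congruent prototiles and that, up to symmetry, there are precisely three admissible triangular fusions; and the earth-map family by exhibiting its longitudinal band structure and the integer parameter $c$ governing the band length.

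The principal difficulty, I expect, is twofold. First, making the vertex-type and \AVC{} enumeration genuinely exhaustive and uniform in the unbounded parameter $m$: every non-prism \AVC{} must be disposed of for all $m\ge 6$ simultaneously, blending range and linear-algebra contradictions with the counting identities. Second --- and this is the heavier part --- the $m=5$ analysis: the case tree is much larger (more admissible $\alpha$-vertices, arbitrarily long strings of the smaller rhombus angle), so several \AVC{}s survive filters (i)--(ii) and must be eliminated by delicate local propagation, and the surviving Archimedean-type tilings must be identified with the snub dodecahedron and the truncated icosahedron and enumerated exactly; establishing that there are precisely three snub-dodecahedron fusions is likely the single most delicate point.
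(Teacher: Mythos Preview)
Your overall programme --- enumerate admissible vertex types, assemble candidate \AVC{}s, filter by linear relations and counting, then propagate --- is the right shape, and is close to what the paper does. But there is a genuine gap in your filter (i): you only require that the linear vertex equations have a solution $(\alpha,\beta,\gamma)$ in the admissible angle ranges. You never invoke the constraint that the $m$-gon and the rhombus share a common edge length $x$. Spherical trigonometry on each prototile gives two expressions for $\cos x$, and equating them yields
\[
\cot^2\tfrac{1}{2}\alpha \;+\; \cos\tfrac{2}{m}\pi\,\csc^2\tfrac{1}{2}\alpha \;=\; \cot\tfrac{1}{2}\beta\,\cot\tfrac{1}{2}\gamma,
\]
which is the paper's key non-linear relation. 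This identity does heavy lifting that your purely linear filter cannot replicate: it is what forces the smaller rhombus angle to be strictly less than $\alpha$ (a fact you simply do not have from the area inequalities alone), and it is what kills several \AVC{}s that pass your filters (i)--(ii). For instance, $\{\alpha^3,\beta^2\gamma\}$ and $\{\alpha^2\beta,\beta^2\gamma\}$ (paper's convention) are both consistent with the angle ranges and the counting identities, but substitution into the edge-length relation gives an immediate contradiction; likewise $\{\alpha\beta^2,\alpha\gamma^3\}$ and similar. The same relation is also what rules out $2\beta+\gamma=2\pi$ for all $m\ge 6$ in one stroke, and what pins down the exact numerical angle values needed to finish the propagation in the $\beta^3$ and $\alpha\beta^2$ cases.

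Without this constraint, your case tree at $m=5$ will not close: several spurious \AVC{}s will survive (i)--(ii), and you will be unable to eliminate them by propagation alone since the obstruction is metric, not combinatorial. You should insert the edge-length identity as a further filter alongside (i), and use it early to establish that the smaller rhombus angle is the global minimum among $\alpha,\beta,\gamma$ --- this sharply restricts the list of degree~$3$ vertices and drives the rest of the case split.
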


The family of earth map type are derived from the monohedral tilings $E_{\square}4$ in \cite{cly}. The first picture of Figure \ref{Fig-a5-a4-EMTs} shows a part of $E_{\square}4$ with $\gamma^3$ at one end and $\gamma^2$ at the other. Let $\mathcal{T}$ denote the general version of it consisting of $2c-1$ rhombi with $\gamma^c$ at one end and $\gamma^{c-1}$ at the other (the first picture shows $c=3$). The earth map type tilings are constructed by four copies of $\mathcal{T}$ and $2$ regular pentagons. The four $\mathcal{T}$'s are glued together between the two pentagons as illustrated in the second picture.

\begin{figure}[h!]
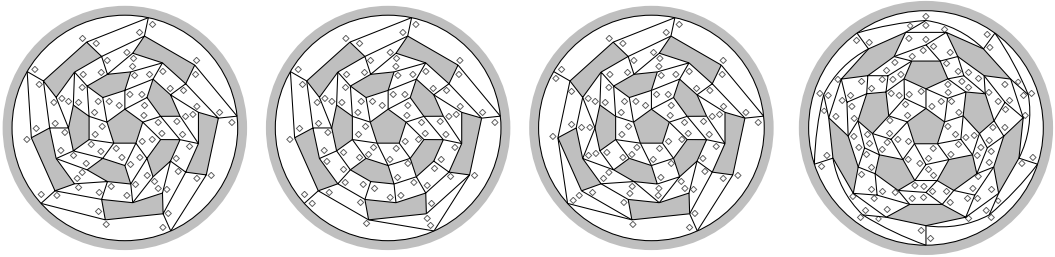
 
\centering

\caption{The four Archimedean type tilings, by regular pentagons and rhombi, $\diamond=\beta$}
\label{Fig-a5-a4-Archimedean-Tilings}
\end{figure}

The tilings of Archimedean type in Figure \ref{Fig-a5-a4-Archimedean-Tilings}. The first three tilings are triangular fusions of the snub dodecahedron and the fourth one is a quadrilateral subdivision of a (deformed) truncated icosahedron (or commonly known as the Adidas Telstar football).

The three triangular fusions of the snub dodecahedron are explained in Figure \ref{Fig-a5-a4-snub-dodecahedron}. The first picture illustrates the snub dodecahedron, which is a dihedral tiling of the sphere by regular triangles and regular pentagons. In general, if the triangles in a dihedral tiling can be grouped into adjacent pairs, then by fusing all the pairs we get quadrilaterals. In the snub dodecahedron, all triangles are regular. Therefore the fusion gives  congruent rhombi. The dashed line in Figure \ref{Fig-a5-a4-snub-dodecahedron} represent the choice of adjacent pairs. The second, third and fourth picture are all possible ways to group the pairs.

\begin{figure}[h!]
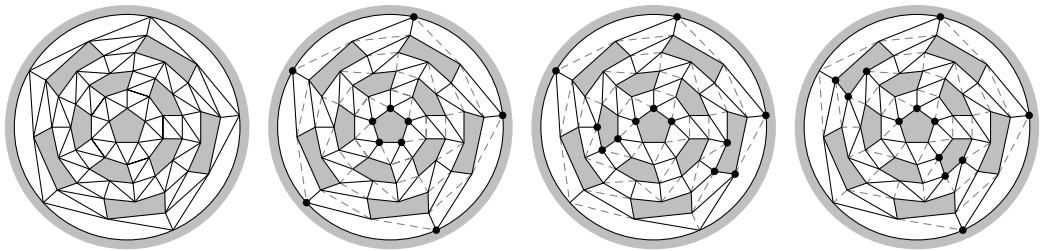
 
\centering

\caption{Three triangular fusions of the snub dodecahedron}
\label{Fig-a5-a4-snub-dodecahedron}
\end{figure}

To distinguish the last three tilings in Figure \ref{Fig-a5-a4-snub-dodecahedron}, we denote the vertices at the pentagons with exterior edge configuration \quotes{dashed-solid-solid} by $\bullet$'s. To highlight the difference, we only display them at pentagons with at least three such vertices. The distribution of $\bullet$'s shows that the the second tiling is different from the third and the fourth in Figure \ref{Fig-a5-a4-snub-dodecahedron}. The third and the fourth tiling are differentiated in Figure \ref{Fig-a5-a4-diff-snub-dodecahedron}. We look at the (shortest) path through rhombi from the middle $\bullet$ of a trio to the opposite edge to a middle $\bullet$ of another trio. In the left picture, from the trio at the centre pentagon to the specified edge of its immediate neighbouring pentagon, the path goes through three rhombi. In the right picture, the corresponding path goes through two rhombi. 


\begin{figure}[h!] 
\centering
\begin{tikzpicture}

\tikzmath{
\s=1;
\r=0.5;
\rr=0.1*\r;
\th=360/5;
}

\begin{scope}[xshift=3.5*\s cm] 

\fill[gray!50]
	(0,0) circle (3.25*\r)
;

\fill[white]
	(0,0) circle (3*\r)
;

\fill[gray!50]
	(90:0.5*\r) -- (90+\th:0.5*\r) -- (90+2*\th:0.5*\r) -- (90+3*\th:0.5*\r) -- (90+4*\th:0.5*\r)
;

\foreach \a in {0,...,4} {

\fill[gray!50, rotate=\a*\th]
	(90-\th:1*\r) -- (\th/6:1.5*\r) -- (0.5*\th:1.5*\r) -- (5*\th/6:1.5*\r)  -- (90-0.5*\th:1*\r) -- cycle
;

\fill[gray!50, rotate=\a*\th]
	(90-\th/12:2*\r) --  (5*\th/6:2*\r)  --  (0.5*\th:2*\r)  -- (42:2.5*\r) --  (90-\th/6:2.5*\r)  -- cycle
;

}

\fill[teal!60!blue, opacity=0.6]
	(90:0.5*\r) -- (90:1*\r) -- (90+0.5*\th:1*\r) -- (-\th/6+2*\th:1.5*\r) -- (-\th/6+2*\th:2*\r) -- (\th/6+2*\th:1.5*\r) -- (90+\th:1*\r) -- (90+\th:0.5*\r) -- cycle
;

	(90-\th:0.5*\r) -- (90-0.5*\th:1*\r) 
	(90:0.5*\r) -- (90+0.5*\th:1*\r) 
	(90+\th:0.5*\r) -- (90+1.5*\th:1*\r) 
	(90+2*\th:0.5*\r) -- (90+2*\th:1*\r) 
	(90-2*\th:0.5*\r) -- (90-2.5*\th:1*\r) 
	(90-2*\th:0.5*\r) -- (90-1.5*\th:1*\r) 
	(90:1*\r) -- (90-0.5*\th:1*\r) 
	(90-\th:1*\r) -- (90-1.5*\th:1*\r) 
	(90+\th:1*\r) -- (90+0.5*\th:1*\r) 
	(90-2*\th:1*\r) -- (5*\th/6-2*\th:1.5*\r) 
	(90+2*\th:1*\r) -- (5*\th/6+2*\th:1.5*\r) 
	(\th/6:1.5*\r) -- (-\th/6:1.5*\r) 
	(\th/6+\th:1.5*\r) -- (-\th/6+\th:1.5*\r) 
	(\th/6+2*\th:1.5*\r) -- (-\th/6+2*\th:1.5*\r) 
	(5*\th/6-1*\th:2*\r) -- (0.5*\th-1*\th:1.5*\r) 
	(\th/6:1.5*\r) -- (\th/6:2*\r) 
	(5*\th/6:2*\r) -- (0.5*\th:1.5*\r) 
	(\th/6:2*\r) -- (0.5*\th:2*\r) 
	(5*\th/6+\th:2*\r) -- (0.5*\th+\th:1.5*\r) 
	(\th/6+\th:1.5*\r) -- (\th/6+\th:2*\r) 
	(\th/6+\th:2*\r) -- (0.5*\th+\th:2*\r) 
	(\th/6+2*\th:1.5*\r) -- (\th/6+2*\th:2*\r) 
	(\th/6+2*\th:2*\r) -- (0.5*\th+2*\th:2*\r) 
	(5*\th/6+2*\th:2*\r) -- (0.5*\th+2*\th:1.5*\r) 
	(90-\th/12+2*\th:1.5*\r) --  (5*\th/6+2*\th:2*\r) 
	(0.5*\th+3*\th:1.5*\r) -- (\th/6+3*\th:2*\r) 
	(5*\th/6+3*\th:2*\r) -- (0.5*\th+3*\th:1.5*\r) 
	(90-\th/12+3*\th:1.5*\r) --  (5*\th/6+3*\th:2*\r) 
	(0.5*\th-1*\th:1.5*\r) -- (\th/6-1*\th:2*\r) 
	(42:2.5*\r) -- (\th/12:2.5*\r) 
	(42+\th:2.5*\r) -- (\th/12+\th:2.5*\r) 
	(42+2*\th:2.5*\r) -- (\th/12+2*\th:2.5*\r) 
	(0.5*\th+3*\th:2*\r) -- (\th/12+3*\th:2.5*\r) 
	(0.5*\th-1*\th:2*\r) -- (\th/12-1*\th:2.5*\r) 
	(90-\th/6-2*\th:2.5*\r) -- (90-\th/6-2*\th:3*\r) 
	(42:2.5*\r) -- (90-\th/6:3*\r) 
	(42+\th:2.5*\r) -- (90-\th/6+\th:3*\r) 
	(42+2*\th:2.5*\r) -- (90-\th/6+2*\th:3*\r) 
	(42-\th:2.5*\r) -- (90-\th/6-\th:3*\r) 
	(42+3*\th:2.5*\r) -- (\th/12+3*\th:3*\r) 
;

\draw[]
	(90+2*\th:0.5*\r) -- (90+2.5*\th:1*\r) 
	(42+3*\th:2.5*\r) -- (90-\th/6+3*\th:3*\r)
;

\foreach \a in {0,1,2,3} {

\draw[rotate=\a*\th]
	(90-2*\th:0.5*\r) -- (90-2*\th:1*\r)
	(90-\th:0.5*\r) -- (90-1.5*\th:1*\r)
	(90-\th/6-\th:2.5*\r) -- (90-\th/6-\th:3*\r)
	(42-\th:2.5*\r) -- (\th/12-\th:3*\r)
;


}

\foreach \a in {0,1,2} {

\draw[rotate=\a*\th]
	(90-\th:1*\r) -- (5*\th/6-\th:1.5*\r)
	(90-\th/12-\th:1.5*\r) --  (5*\th/6-\th:2*\r) 
	(0.5*\th:1.5*\r) -- (\th/6:2*\r) 
	(0.5*\th:2*\r) -- (\th/12:2.5*\r) 
;








}





%

\foreach \a in {0,1} {

\draw[rotate=\a*\th]
	(90+2*\th:1*\r) -- (90+1.5*\th:1*\r) 
	(\th/6+3*\th:2*\r) -- (0.5*\th+3*\th:2*\r) 
	(\th/6+3*\th:1.5*\r) -- (\th/6+3*\th:2*\r) 
	(\th/6+3*\th:1.5*\r) -- (-\th/6+3*\th:1.5*\r)
	(42+3*\th:2.5*\r) -- (\th/12+3*\th:2.5*\r) 
;






}

\foreach \a in {0,...,4}
{

\tikzset{rotate=\a*\th}

\draw
	(90:0.5*\r) -- (90-\th:0.5*\r)  
	(90-0.5*\th:1*\r) -- (90-\th:1*\r) 
	(\th/6:1.5*\r) -- (0.5*\th:1.5*\r) 
	(0.5*\th:1.5*\r) -- (5*\th/6:1.5*\r)  
	(90-0.5*\th:1*\r) -- (5*\th/6:1.5*\r) 
	(90:1*\r) -- (90-\th/12:1.5*\r) 
	%
	(0.5*\th:1.5*\r) -- (0.5*\th:2*\r) 
	(-\th/6:2*\r) -- (\th/6:2*\r) 
	(0.5*\th:2*\r) -- (5*\th/6:2*\r) 
	(90-\th/6:2.5*\r) -- (42:2.5*\r) 
	(0.5*\th:2*\r) -- (42:2.5*\r) 
	(\th/6:2*\r) -- (\th/12:2.5*\r) 
	%
	(-\th/6:1.5*\r) -- (-\th/6:2*\r) 
	%
	%
	%
;


}

\draw[-stealth, white]
	(90:0.5*\r) to[out=150, in=30] ($(90+\th:1*\r) !1/2! (\th/6+2*\th:1.5*\r)$)
;

\draw
	(0,0) circle (3*\r)
;

\fill 
	(90:0.5*\r) circle (\rr)
	(90-\th:0.5*\r) circle (\rr)
	(90+\th:0.5*\r) circle (\rr)
	(90+1.5*\th:1*\r) circle (\rr)
	(5*\th/6+2*\th:1.5*\r) circle (\rr)
	(0.5*\th+2*\th:1.5*\r) circle (\rr)
	(-\th/6:2*\r) circle (\rr)
	(0.5*\th-1*\th:2*\r) circle (\rr)
	(42-\th:2.5*\r) circle (\rr)
	(\th/12:3*\r) circle (\rr)
	(\th/12+\th:3*\r) circle (\rr)
	(\th/12+2*\th:3*\r) circle (\rr)
;

\end{scope} 

\begin{scope}[xshift=7*\s cm] 

\fill[gray!50]
	(0,0) circle (3.25*\r)
;

\fill[white]
	(0,0) circle (3*\r)
;

\fill[gray!50]
	(90:0.5*\r) -- (90+\th:0.5*\r) -- (90+2*\th:0.5*\r) -- (90+3*\th:0.5*\r) -- (90+4*\th:0.5*\r)
;

\foreach \a in {0,...,4} {

\fill[gray!50, rotate=\a*\th]
	(90-\th:1*\r) -- (\th/6:1.5*\r) -- (0.5*\th:1.5*\r) -- (5*\th/6:1.5*\r)  -- (90-0.5*\th:1*\r) -- cycle
;

\fill[gray!50, rotate=\a*\th]
	(90-\th/12:2*\r) --  (5*\th/6:2*\r)  --  (0.5*\th:2*\r)  -- (42:2.5*\r) --  (90-\th/6:2.5*\r)  -- cycle
;

}

\fill[teal!60!blue, opacity=0.6]
	(90:0.5*\r) --  (90-0.5*\th:1*\r) -- (90-\th:1*\r) -- (5*\th/6-\th:1.5*\r) -- (90-1.5*\th:1*\r) -- (90-\th:0.5*\r) -- cycle
;

	(90-\th:0.5*\r) -- (90-0.5*\th:1*\r) 
	(90:0.5*\r) -- (90+0.5*\th:1*\r) 
	(90+\th:0.5*\r) -- (90+1.5*\th:1*\r) 
	(90+2*\th:0.5*\r) -- (90+2*\th:1*\r) 
	(90-2*\th:0.5*\r) -- (90-2.5*\th:1*\r) 
	(90-2*\th:0.5*\r) -- (90-1.5*\th:1*\r) 
	(90:1*\r) -- (90-0.5*\th:1*\r) 
	(90-\th:1*\r) -- (90-1.5*\th:1*\r) 
	(90+\th:1*\r) -- (90+0.5*\th:1*\r) 
	(90-2*\th:1*\r) -- (5*\th/6-2*\th:1.5*\r) 
	(90+2*\th:1*\r) -- (5*\th/6+2*\th:1.5*\r) 
	(\th/6:1.5*\r) -- (-\th/6:1.5*\r) 
	(\th/6+\th:1.5*\r) -- (-\th/6+\th:1.5*\r) 
	(\th/6+2*\th:1.5*\r) -- (-\th/6+2*\th:1.5*\r) 
	(5*\th/6-1*\th:2*\r) -- (0.5*\th-1*\th:1.5*\r) 
	(\th/6:1.5*\r) -- (\th/6:2*\r) 
	(5*\th/6:2*\r) -- (0.5*\th:1.5*\r) 
	(\th/6:2*\r) -- (0.5*\th:2*\r) 
	(42:2.5*\r) -- (\th/12:2.5*\r) 
	(5*\th/6+\th:2*\r) -- (0.5*\th+\th:1.5*\r) 
	(\th/6+\th:1.5*\r) -- (\th/6+\th:2*\r) 
	(\th/6+\th:2*\r) -- (0.5*\th+\th:2*\r) 
	(42+\th:2.5*\r) -- (\th/12+\th:2.5*\r) 
	(42:2.5*\r) -- (90-\th/6:3*\r) 
	(42+\th:2.5*\r) -- (90-\th/6+\th:3*\r) 
	(42-\th:2.5*\r) -- (90-\th/6-\th:3*\r) 
	(\th/6+2*\th:1.5*\r) -- (\th/6+2*\th:2*\r) 
	(42+2*\th:2.5*\r) -- (\th/12+2*\th:3*\r) 
	(0.5*\th+2*\th:2*\r) -- (\th/12+2*\th:2.5*\r) 
	(0.5*\th+2*\th:1.5*\r) -- (0.5*\th+2*\th:2*\r) 
	(90-\th/6+2*\th:2.5*\r) -- (90-\th/6+2*\th:3*\r) 
	(-\th/6+3*\th:1.5*\r) -- (-\th/6+3*\th:2*\r) 
	(\th/6+3*\th:1.5*\r) -- (\th/6+3*\th:2*\r)  
	(0.5*\th+3*\th:1.5*\r) -- (0.5*\th+3*\th:2*\r) 
	(0.5*\th+3*\th:2*\r) -- (\th/12+3*\th:2.5*\r) 
	(-\th/6-1*\th:1.5*\r) -- (-\th/6-1*\th:2*\r) 
	(\th/6-1*\th:1.5*\r) -- (\th/6-1*\th:2*\r)  
	(\th/6-\th:2*\r) -- (0.5*\th-\th:2*\r) 
	(42-1*\th:2.5*\r) -- (\th/12-1*\th:2.5*\r)  
	(42+3*\th:2.5*\r) -- (90-\th/6+3*\th:3*\r) 
;

\draw[]
	(90+2*\th:0.5*\r) -- (90+2.5*\th:1*\r) 
	(42+2*\th:2.5*\r) -- (90-\th/6+2*\th:3*\r)
;

\foreach \a in {0,1,2,3} {

\draw[rotate=\a*\th]
	(90-2*\th:0.5*\r) -- (90-2*\th:1*\r)
	(90-\th:0.5*\r) -- (90-1.5*\th:1*\r)
	(90-\th/6-2*\th:2.5*\r) -- (90-\th/6-2*\th:3*\r)
	(42-2*\th:2.5*\r) -- (\th/12-2*\th:3*\r) 
;


}

\foreach \a in {0,1,2} {

\draw[rotate=\a*\th]
	(90-\th:1*\r) -- (5*\th/6-\th:1.5*\r)
	(90-\th/12-\th:1.5*\r) --  (5*\th/6-\th:2*\r) 
	(0.5*\th-\th:1.5*\r) -- (0.5*\th-\th:2*\r) 
	(0.5*\th:1.5*\r) -- (\th/6:2*\r) 
	(-\th/6:1.5*\r) -- (-\th/6:2*\r) 
	(0.5*\th-\th:2*\r) -- (\th/12-\th:2.5*\r)
;







}

\foreach \a in {0,1} {

\draw[rotate=\a*\th]
	(90+2*\th:1*\r) -- (90+1.5*\th:1*\r) 
	%
	%
	(90-\th/12+2*\th:1.5*\r) --  (5*\th/6+2*\th:2*\r)
	(0.5*\th+3*\th:1.5*\r) -- (\th/6+3*\th:2*\r) 
	(\th/6+3*\th:1.5*\r) -- (-\th/6+3*\th:1.5*\r)
	(5*\th/6+2*\th:2*\r) -- (0.5*\th+2*\th:1.5*\r)
	(\th/6+2*\th:2*\r) -- (0.5*\th+2*\th:2*\r) 
	(42+2*\th:2.5*\r) -- (\th/12+2*\th:2.5*\r) 
;








%
}






\foreach \a in {0,...,4}
{

\draw[rotate=\a*\th]
	(90:0.5*\r) -- (90-\th:0.5*\r)  
	(90-0.5*\th:1*\r) -- (90-\th:1*\r) 
	(\th/6:1.5*\r) -- (0.5*\th:1.5*\r) 
	(0.5*\th:1.5*\r) -- (5*\th/6:1.5*\r)  
	(90-0.5*\th:1*\r) -- (5*\th/6:1.5*\r) 
	(90:1*\r) -- (90-\th/12:1.5*\r) 
	%
	%
	(-\th/6:2*\r) -- (\th/6:2*\r) 
	(0.5*\th:2*\r) -- (5*\th/6:2*\r) 
	(90-\th/6:2.5*\r) -- (42:2.5*\r) 
	(0.5*\th:2*\r) -- (42:2.5*\r) 
	(\th/6:2*\r) -- (\th/12:2.5*\r) 
	%
	%
	%
	%
;


}

\draw
	(0,0) circle (3*\r)
;

\draw[-stealth, white]
	(90:0.5*\r) to[out=10, in=120] ($(5*\th/6-\th:1.5*\r) !1/2! (90-1.5*\th:1*\r)$) 
;

\fill 
	(90:0.5*\r) circle (\rr)
	(90-\th:0.5*\r) circle (\rr)
	(90+\th:0.5*\r) circle (\rr)
	(90-2*\th:1*\r) circle (\rr)
	(0.5*\th-1*\th:1.5*\r) circle (\rr)
	(90-\th/12-2*\th:1.5*\r) circle (\rr)
	(-\th/6+2*\th:2*\r) circle (\rr)
	(\th/6+2*\th:2*\r) circle (\rr) 
	(\th/12+2*\th:2.5*\r) circle (\rr)
	(\th/12:3*\r) circle (\rr)
	(\th/12+\th:3*\r) circle (\rr)
	(\th/12-\th:3*\r) circle (\rr)
;

\end{scope} 

\end{tikzpicture}
\caption{Difference between two triangular fusions of the snub dodecahedron}
\label{Fig-a5-a4-diff-snub-dodecahedron}
\end{figure}
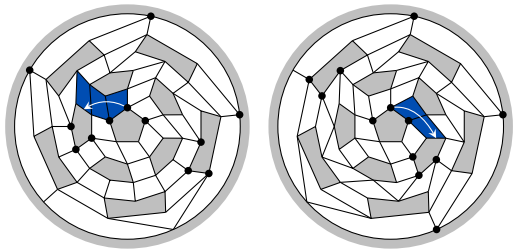

It is interesting to note that, the three triangular fusions of the snub dodecahedron can also be derived from the $1$-factors of the graph of dodecahedron. The explanation of such connection can be seen at the end of the proof of Proposition \ref{Prop-a5-a4-albe2}.

The fourth tiling in Figure \ref{Fig-a5-a4-Archimedean-Tilings} can be derived from a truncated icosahedron. Each hexagon in Figure \ref{Fig-a5-a4-truncated-icosahedron} is divided into three kites (by red dotted lines), and form a one parameter family of tiling with regular pentagon and kite. For one special value, the kite becomes rhombus.

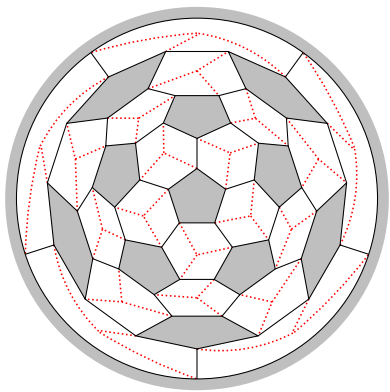
\begin{figure}[h!]
\centering
\begin{tikzpicture}[>=latex,scale=1]

\tikzmath{
\r=1;
\th=360/5;
}

\fill[gray!50] (0,0) circle (2.55*\r);
	
\fill[white] (0,0) circle (2.4*\r);

\fill[gray!50]
	(90:0.4*\r) -- (90+\th:0.4*\r) -- (90+2*\th:0.4*\r) -- (90-2*\th:0.4*\r) -- (90-\th:0.4*\r) -- cycle
;

\foreach \a in {0,...,4}{

\fill[gray!50, rotate=\a*\th]
	(90:0.8*\r) -- (66:1.1*\r) -- (78:1.4*\r) -- (102:1.4*\r) -- (114:1.1*\r) -- cycle
;

\fill[gray!50, rotate=\a*\th]
	(66:1.6*\r) -- (78:2*\r) -- (54:2*\r) -- (30:2*\r) -- (42:1.6*\r)		
;

}


	(0,0) -- (18:0.4) -- (90:0.4)
	(66:1.1) -- (78:1.4) -- (102:1.4) -- (114:1.1) -- (90:0.8)
	(66:1.6) -- (78:2) -- (54:2) -- (30:2) -- (42:1.6)		
	(54:2.4) arc (54:126:2.4) -- (126:2.6) arc (126:54:2.6);

\foreach \a in {0,...,4} {

\tikzset{rotate=72*\a}

\draw[red, densely dotted, line width=0.6]
	(18:0.4*\r) -- (54:0.75*\r) -- (90:0.8*\r)
	(54:0.75*\r) -- (42:1.1*\r) 
	(66:1.1*\r) -- (54:1.32*\r) -- (30:1.4*\r)
	(54:1.32*\r) -- (66:1.6*\r)
	(114:1.6*\r) -- (90:1.7*\r) -- (78:2*\r)
	(90:1.7*\r) -- (78:1.4*\r)
	(90:2.175*\r) -- (102:2*\r)
;

\arcThroughThreePoints[red, densely dotted, line width=0.6]{90:2.2*\r}{118:2.3*\r}{126:2.4*\r};
\arcThroughThreePoints[red, densely dotted, line width=0.6]{54:2*\r}{72:2.1*\r}{90:2.2*\r};

}

\foreach \a in {0,...,4}
{
\tikzset{rotate=72*\a}

\draw
	(18:0.4*\r) -- (90:0.4*\r) -- (90:0.8*\r) -- (66:1.1*\r) -- (42:1.1*\r) -- (18:0.8*\r)
	(66:1.1*\r) -- (78:1.4*\r) -- (102:1.4*\r) -- (114:1.1*\r)
	(78:1.4*\r) -- (66:1.6*\r) -- (42:1.6*\r) -- (30:1.4*\r)
	(66:1.6*\r) -- (78:2*\r) -- (102:2*\r) -- (114:1.6*\r)
	(78:2*\r) -- (54:2*\r) -- (30:2*\r)
	(54:2*\r) -- (54:2.4*\r)
;

	(90:0.4*\r) -- (54:0.75*\r) -- (66:1.1*\r)
	(54:0.75*\r) -- (18:0.8*\r)
	(42:1.1*\r) -- (54:1.32*\r) -- (78:1.4*\r)
	(54:1.32*\r) -- (42:1.6*\r)
	(66:1.6*\r) -- (90:1.7*\r) -- (102:2*\r)
	(90:1.7*\r) -- (102:1.4*\r)
	(78:2*\r) -- (90:2.2*\r)
	(54:2.4*\r) to[out=165,in=10] 
	(90:2.2*\r) to[out=190,in=50] (126:2*\r);

}

\draw[] (0,0) circle (2.4*\r);
	
\end{tikzpicture}
\caption{A quadrilateral subdivion of the truncated icosahedron (football)}
\label{Fig-a5-a4-truncated-icosahedron}
\end{figure}

Throughout this paper, a pentagon refers to a regular pentagon and an $m$-gon refers to a regular $m$-gon with $m\ge6$ unless otherwise specified.

This paper is organised as follows. In Section \ref{Sec-basic}, we explain the basic terminology and tools. In Section \ref{Sec-a5-a4-tilings}, we classify the tilings by pentagons and rhombi. In Section \ref{Sec-a6+-a4-tilings}, we classify the tilings by $m$-gons with $m\ge6$ and rhombi. The key to classification is to find all the vertices in the tilings. The sporadic tilings are obtained in Propositions \ref{Prop-a5-a4-be3}, \ref{Prop-a5-a4-albe2}. The first infinite family is obtained in Proposition \ref{Prop-a5-a4-be2ga} and the second infinite family is obtained in Propositions \ref{Prop-a5-a4-albega}, \ref{Prop-am-a4-albega}. 

\section{Basics} \label{Sec-basic}

The {\em vertex angle sum} of a vertex $\alpha^a\beta^b\gamma^c$, consisting of $a$ copies of $\alpha$ and $b$ copies of $\beta$ and $c$ copies of $\gamma$, is
\begin{align}\label{rational-VertexAngSum}
a \alpha + b \beta + c \gamma = 2\pi.
\end{align} 
In a vertex notation, $a,b,c$ are assumed to be $>0$ unless otherwise specified. That is, we only express the angles appearing at a vertex whenever possible. For example, $\alpha\beta^2$ is a vertex with $a=1, b=2$ and $c=0$. The notation $\alpha\beta^2\cdots$ means a vertex with at least one $\alpha$ and two $\beta$'s, i.e., $a\ge1$ and $b\ge2$. The angle combination in $\cdots$ is called the {\em remainder} of the vertex. The value of the remainder is denoted by $R$. For example, $R(\alpha\beta^2) = 2\pi - \alpha - 2\beta$.

There are various constraints on the angle combinations at vertices in a tiling. Examples of such constraints are the vertex angle sum and the quadrilateral angle sum. A collection of all vertices in a tiling satisfying various constraints is called an {\em anglewise vertex combination} ($\AVC$). The following $\AVC$ is from \eqref{Eq-AVC-be2ga-begac-albegac},
\begin{align*}
\AVC = \{ \beta^2\gamma, \beta\gamma^c, \alpha\beta\gamma^c \}.
\end{align*}
The generic $c$ may take different values at different vertex. The tilings constructed from \eqref{Eq-AVC-be2ga-begac-albegac} do not have $\beta\gamma^c$. We use \quotes{$\equiv$} in place of \quotes{$=$} to denote the set of all vertices which actually appear in a tiling. For example, we have the following for the tilings in Figure \ref{Fig-a5-a4-Tilings-be2ga-albegac}
\begin{align*}
\AVC \equiv \{ \beta^2\gamma, \alpha\beta\gamma^c \}.
\end{align*}

To obtain the vertices, it is necessary and convenient to have notations for studying various angle arrangements. For example, $\alpha_1\gamma_2\cdots$ denotes the vertex where $T_1$ contributes $\alpha$ and $T_2$ contributes $\gamma$ in the first picture of Figure \ref{Fig-adj-am-a4}. To emphasize $\alpha_1$ is adjacent to $\gamma_2$ along an edge \quotes{ $\vert$ }, we use $\alpha_1 \vert \gamma_2 \cdots$ to denote the vertex. In addition, the same picture shows that $\alpha\vert\gamma\cdots$ is a vertex if and only if $\alpha\vert\beta\cdots$ is a vertex. Similarly, $T_1, T_2$ in the second picture of Figure \ref{Fig-adj-am-a4} show that $\gamma\vert\gamma\cdots$ is a vertex if and only if $\beta\vert\beta\cdots$ is also a vertex. For a full vertex, such as $\alpha^3$ in the third picture, we use $\vert \alpha \vert \alpha \vert \alpha \vert$ to denote its angle arrangement.

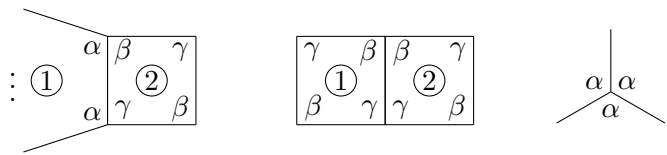
\begin{figure}[h!] 
\centering
\begin{tikzpicture}

\tikzmath{
\s=1;
\r=1;
\g=5;
\ph=360/\g;
\x=\r*cos(\ph/2);
\y=\r*sin(\ph/2);
\rr=2*\y/sqrt(2);
\h=4;
\th=360/\h;
\xx=\rr*cos(\th/2);
}

\begin{scope}

\foreach \a in {0,1,4} {

\draw[rotate=\a*\ph]
	(0.5*\ph:\r) -- (-0.5*\ph:\r)
;

}

\foreach \a in {0,...,3} {

\tikzset{shift={(\x+\xx,0)}}

\draw[rotate=\a*\th]
	(1.5*\th:\rr) -- (0.5*\th:\rr)
;

}

\foreach \a in {0,1} {

\tikzset{shift={(\x+\xx,0)}}

\node at (1.5*\th+\a*2*\th:0.65*\rr) {\small $\beta$};
\node at (0.5*\th+\a*2*\th:0.65*\rr) {\small $\gamma$};

}

\foreach \a in {0,4} {

\node at (0.5*\ph+\a*\ph:0.75*\r) {\small $\alpha$};

}

\node at (175:0.45*\r) {$\vdots$};

\node[inner sep=1,draw,shape=circle] at (0,0) {\small $1$};
\node[inner sep=1,draw,shape=circle] at (\x+\xx,0) {\small $2$};

\end{scope}

\begin{scope}[xshift=4.5*\s cm] 

\foreach \aa in {-1,1} {

\tikzset{shift={(\aa*\xx,0)}, xscale=\aa}

\foreach \a in {0,...,3} {

\draw[rotate=\th*\a]
	(0.5*\th:\rr) -- (1.5*\th:\rr)
;

}

\foreach \a in {0,2} {

\node at (1.5*\th+\th*\a: 0.625*\rr) {\small $\beta$}; 
\node at (0.5*\th+\th*\a: 0.65*\rr) {\small $\gamma$}; 

}

}

\node[inner sep=1,draw,shape=circle] at (-\xx,0) {\small $1$};
\node[inner sep=1,draw,shape=circle] at (\xx,0) {\small $2$};

\end{scope} 

\begin{scope}[xshift=7.5*\s cm, yshift=-0.15*\s cm] 

\foreach \a in {0,1,2} {

\draw[rotate=120*\a]
	(0:0) -- (90:\rr) 
;

\node at (270+\a*120:0.3*\rr) {\small $\alpha$};

}

\end{scope} 

\end{tikzpicture}
\caption{The arrangements of $\alpha \vert \gamma$ and $\gamma\vert\gamma$ and $\beta\vert\beta$ and $\alpha^3$}
\label{Fig-adj-am-a4}
\end{figure}


The following lemma follows from a well-known fact on triangle-free graphs on the sphere with vertex degree $\ge3$.

\begin{lem}\label{Lem-deg3} There is a degree $3$ vertex in a dihedral tiling by quadrilaterals and $m$-gons where $m\ge5$.
\end{lem}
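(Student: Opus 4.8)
The plan is a short contradiction argument from Euler's formula, exploiting the fact that the two prototiles are a quadrilateral and an $m$-gon with $m\ge 5$, so every tile has at least four edges.

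Write $V,E,F$ for the numbers of vertices, edges and tiles of the tiling of the sphere (so $F=f$), and suppose towards a contradiction that every vertex has degree $\ge 4$. Euler's formula gives $V-E+F=2$. Double counting edge--tile incidences (with multiplicity) yields $2E=\sum_{\text{tiles }T}(\#\text{ edges of }T)\ge 4F$, since each tile is a quadrilateral or an $m$-gon with $m\ge5$; hence $F\le E/2$. Double counting edge--vertex incidences yields $2E=\sum_{v}\deg v\ge 4V$ under our hypothesis, hence $V\le E/2$. Substituting, $2=V-E+F\le \tfrac{E}{2}-E+\tfrac{E}{2}=0$, which is absurd. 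Therefore some vertex has degree $<4$; as every vertex has degree $\ge 3$ throughout the paper, that vertex has degree exactly $3$.

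This is just the well-known fact that a triangle-free graph embedded on the sphere has a vertex of degree $\le 3$ (the role of ``triangle-free'' being played here by ``every tile has $\ge 4$ edges''). There is no genuine obstacle; one only needs to take the incidence sums with multiplicity, so that the identities $2E=\sum_v\deg v=\sum_T(\#\text{ edges of }T)$ survive any degenerate adjacencies, and to apply Euler's formula to the whole sphere rather than to a disk. It is essential that no tile is a triangle: for instance the snub dodecahedron is a tiling by regular triangles and regular pentagons all of whose vertices have degree $5$.
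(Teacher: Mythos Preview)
Your argument is correct: it is exactly the standard Euler-formula proof of the cited fact. The paper itself does not give a proof of this lemma at all---it simply asserts that it follows from the well-known fact that a triangle-free planar (spherical) graph with minimum degree $\ge 3$ has a vertex of degree $3$---so your write-up supplies precisely the routine verification the paper omits.
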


\begin{lem}[Counting Lemma, {\cite[Lemma 2.3]{luk2}}] \label{Lem-Counting} In a dihedral tiling of the sphere by regular $m$-gons and rhombi, If at every vertex the number of $\beta$ is no more than the number of $\gamma$, then at every vertex these two numbers are equal.
\end{lem}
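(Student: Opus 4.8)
The plan is a global double-counting argument over all angle occurrences in the tiling. For a vertex $v$ of the tiling, write $b(v)$ for the number of $\beta$'s and $c(v)$ for the number of $\gamma$'s appearing at $v$. The total number of $\beta$-occurrences in the tiling is $\sum_v b(v)$, and likewise the total number of $\gamma$-occurrences is $\sum_v c(v)$; here the sums range over all vertices, and each rhombus corner contributes its one angle to exactly one such summand.

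The key observation is that each rhombus contributes exactly two $\beta$'s and exactly two $\gamma$'s to these totals. Indeed, a rhombus is a spherical quadrilateral all of whose sides have the same length $x$, so its opposite angles are equal; hence its four corner angles read $\beta,\gamma,\beta,\gamma$ in cyclic order. Therefore, if $f_4$ denotes the number of rhombi in the tiling, then
\[
\sum_v b(v) \;=\; 2 f_4 \;=\; \sum_v c(v),
\]
since the $m$-gons contribute only $\alpha$'s and hence nothing to either count. (The regular $m$-gons are irrelevant to this balance; only the rhombi matter.)

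Now combine this equality with the hypothesis. By assumption $b(v)\le c(v)$ at every vertex $v$, so $c(v)-b(v)\ge 0$ for all $v$; yet $\sum_v \bigl(c(v)-b(v)\bigr)=0$ by the displayed identity. A sum of nonnegative terms that vanishes forces every term to vanish, so $b(v)=c(v)$ for every vertex $v$, as claimed. There is essentially no obstacle here beyond being careful about two points: first, that ``rhombus'' genuinely forces the $\beta\gamma\beta\gamma$ pattern (so the per-tile contribution is exactly $2$ and $2$, not merely $2$ in total), and second, that the vertex-wise sums of $b$ and $c$ really do recover the tile-wise totals, which is immediate since every rhombus corner sits at exactly one vertex.
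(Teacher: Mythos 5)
Your double-counting argument is correct: since each rhombus contributes exactly two $\beta$'s and two $\gamma$'s (and the regular $m$-gons contribute neither), the vertex-wise totals of $\beta$ and $\gamma$ both equal $2f_4$, and the pointwise inequality $b(v)\le c(v)$ together with equality of the sums forces $b(v)=c(v)$ everywhere. The paper does not reprove this lemma but imports it from the cited reference, and your argument is the standard proof of such counting lemmas in this literature, so there is nothing further to flag.
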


The angle sum of the regular $m$-gon is $m\alpha > (m-2)\pi$ and the rhombus angle sum is $2\beta+2\gamma > 2\pi$. Then we get
\begin{align*}
\alpha > (1 - \tfrac{2}{m})\pi, \quad
\beta+\gamma > \pi.
\end{align*}
Up to symmetry between $\beta, \gamma$, we may assume $\beta>\gamma$. This implies $\beta>\frac{1}{2}\pi$.

In a dihedral tiling by regular $m$-gons and rhombi, there is a pair of adjacent tiles consisted of one $m$-gon and one rhombus in the first picture of Figure \ref{Fig-adj-am-a4}. The next lemma follows from this observation.

\begin{lem}\label{Lem-albe-alga} In a dihedral tiling by regular $m$-gons and rhombi, $\alpha\beta\cdots, \alpha\gamma\cdots$ are vertices. 
\end{lem}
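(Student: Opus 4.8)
The plan is to exhibit one edge shared between a regular $m$-gon and a rhombus and then read off the two desired vertices at its two endpoints. First I would argue that such an edge exists: the tiling is dihedral, so both prototiles occur, and since the tiles tile the connected sphere their adjacency graph is connected; consequently some $m$-gon $T_1$ and some rhombus $T_2$ must share an edge $e$. This is exactly the local picture already drawn in the first picture of Figure \ref{Fig-adj-am-a4}, and indeed the paragraph preceding the lemma states it outright, so one could simply cite that observation.

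Next I would look at the two endpoints $v, v'$ of $e$. At each endpoint $T_1$ contributes the angle $\alpha$, since every interior angle of a regular $m$-gon equals $\alpha$. The rhombus $T_2$ contributes one of its own angles at each of $v, v'$; because the angles of the rhombus occur in the cyclic order $\beta, \gamma, \beta, \gamma$ around its boundary, the two corners of $T_2$ lying on the single edge $e$ are consecutive corners and therefore carry different angles — one is $\beta$ and the other is $\gamma$ (here one uses $\beta \neq \gamma$, which holds since $\beta > \gamma$). Hence one endpoint of $e$ is a vertex of the form $\alpha\beta\cdots$ and the other is a vertex of the form $\alpha\gamma\cdots$, which is precisely what is claimed.

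The argument has essentially no obstacle: the only step requiring justification is the existence of an adjacent $m$-gon--rhombus pair, and that follows immediately from connectedness of the tiling (or is simply taken from the remark just before the lemma). The remaining input is the elementary fact that consecutive vertices of a rhombus have unequal angles, together with the fact that a regular $m$-gon contributes $\alpha$ at each of its corners.
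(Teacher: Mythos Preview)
Your proposal is correct and takes essentially the same approach as the paper: the paper simply observes (in the paragraph immediately preceding the lemma) that an adjacent $m$-gon/rhombus pair as in the first picture of Figure~\ref{Fig-adj-am-a4} must exist, and states that the lemma follows from this observation. Your write-up makes explicit the two easy steps the paper leaves implicit---connectedness gives the adjacent pair, and consecutive rhombus corners carry $\beta,\gamma$ respectively---so it is a faithful expansion of the paper's one-line argument.
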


An immediate consequence of Lemma \ref{Lem-albe-alga} is that one of $\alpha^a\beta^b, \alpha\beta\gamma\cdots$ is a vertex. Then we have one of the inequalities, $2\alpha + \beta \le 2\pi$ and $\alpha+2\beta \le 2\pi$ and $\alpha+\beta+\gamma\le2\pi$. Combined with $\alpha>(1 - \tfrac{2}{m})\pi$ and $\beta>\gamma$ and $\beta+\gamma>\pi$, we know $\alpha, \gamma<\pi$. Hence the regular $m$-gon is convex. 

By \cite[Lemma 5.2]{cl} or \cite[Lemma 18]{cly}, we have 
\begin{align}
\cos x = \cot \tfrac{1}{2}\beta \cot \tfrac{1}{2}\gamma.
\end{align}
On the other hand, the spherical cosine law for angles on the regular $m$-gon implies
\begin{align}
\cos x = \cot^2 \tfrac{1}{2}\alpha + \frac{\cos \frac{2}{m}\pi}{ \sin^2 \frac{1}{2}\alpha }. 
\end{align}
Combining the two identities, we get
\begin{align} \label{Eq-cot-al-be-ga}
\cot^2 \tfrac{1}{2}\alpha + \cos \tfrac{2}{m}\pi \csc^2 \tfrac{1}{2}\alpha = \cot \tfrac{1}{2}\beta \cot \tfrac{1}{2}\gamma,
\end{align}
Then for $m\ge5$ and $\gamma \in (0,\pi)$, we get $\cot\frac{1}{2}\beta >0$, which implies $\beta<\pi$. Meanwhile, by $m\ge5$ and $\beta>\gamma$ and \eqref{Eq-cot-al-be-ga}, we get $\cot  \tfrac{1}{2}\beta \cot \tfrac{1}{2}\gamma > \cot^2 \frac{1}{2}\alpha$. By $\alpha, \beta, \gamma \in (0,\pi)$ and $\cot \theta$ is strictly decreasing on $(0, \pi)$, this implies $\alpha>\gamma$. So $\gamma$ is the smallest angle. Hence we have $\alpha,\beta>\gamma$ throughout this paper. 



By Lemma \ref{Lem-deg3}, there is a degree $3$ vertex. By $\alpha\beta\cdots$, we have one of the inequalities $2\alpha+\beta \le 2\pi$ and $\alpha+2\beta \le 2\pi$ and $\alpha+\beta+\gamma\le 2\pi$. Then $\alpha, \beta > \gamma$ imply
\begin{align}\label{Eq-al+be+ga<=2pi}
\alpha+\beta+\gamma\le 2\pi.
\end{align}
It further implies that $\gamma^3, \alpha\gamma^2, \beta\gamma^2$ are not vertices. For $\alpha^3, \beta^3, \alpha^2\beta, \alpha\beta^2$, the inequality $\beta \ge \alpha$ implies $\beta \ge \frac{2}{3}\pi \ge \alpha$. Combined with $\alpha>(1 - \tfrac{2}{m})\pi$, we get $m<6$. This means that $\alpha^3, \beta^3, \alpha^2\beta, \alpha\beta^2$ are not vertices for $m$-gons with $m\ge6$. Hence one of the following degree $3$ vertices must appear in a dihedral tiling for the given gonality $m\ge5$,
\begin{align}
\label{Eq-a5-a4-deg3-list}
&m=5: \quad
\alpha^3, \alpha^2\gamma, \beta^3, \alpha^2\beta, \alpha\beta^2, \beta^2\gamma, \alpha\beta\gamma; \\
\label{Eq-am-a4-deg3-list}
&m \ge 6: \quad \alpha^2\gamma,
\beta^2\gamma, \alpha\beta\gamma.
\end{align}

\section{Tilings by Rhombi and Pentagons} \label{Sec-a5-a4-tilings}

\begin{prop}\label{Prop-a5-a4-albega} The dihedral tiling with vertex $\alpha\beta\gamma$ is in Figure \ref{Fig-a5-a4-Tiling-albega}.
\end{prop}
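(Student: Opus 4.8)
The plan is to determine the complete vertex list first, show it reduces to $\alpha\beta\gamma$ alone, then count the tiles by Euler's formula, and finally reconstruct the tiling and recognise it as the prism of Figure \ref{Fig-a5-a4-Tiling-albega}.

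Since $\alpha\beta\gamma$ is a vertex, $\alpha+\beta+\gamma=2\pi$. For $m=5$ we have $\alpha>\tfrac35\pi$, and together with $\beta>\tfrac12\pi$ this gives $\alpha+\beta>\pi$; hence any vertex carrying both an $\alpha$ and a $\beta$ has angle sum at least $\alpha+\beta+\gamma=2\pi$, with equality only for $\alpha\beta\gamma$ itself. So every other vertex is a pure $\alpha\gamma$-vertex $\alpha^a\gamma^c$ or a pure $\beta\gamma$-vertex $\beta^b\gamma^c$. At $\alpha\beta\gamma$ the number of $\beta$'s equals the number of $\gamma$'s, so by the Counting Lemma (Lemma \ref{Lem-Counting}) it suffices to exclude a vertex with strictly more $\beta$'s than $\gamma$'s. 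Such a vertex must be pure $\beta\gamma$, say $\beta^b\gamma^c$ with $b>c\ge0$; from $\beta>\tfrac12\pi$ we get $b\le3$, and running through the finitely many cases using $\alpha+\beta+\gamma=2\pi$, $\beta>\tfrac12\pi$ and $\alpha<\pi$, every one is impossible except $\beta^3$ (which forces $\beta=\tfrac23\pi$) and $\beta^2\gamma$ (which forces $\alpha=\beta$). These two, together with $\alpha^3$, are exactly the entries of the degree-$3$ list \eqref{Eq-a5-a4-deg3-list} whose associated tilings (Propositions \ref{Prop-a5-a4-be3}, \ref{Prop-a5-a4-albe2}, \ref{Prop-a5-a4-be2ga}) do not have $\alpha\beta\gamma$ as a vertex, so here none of them is a vertex. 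The Counting Lemma then forces every vertex to have equally many $\beta$'s and $\gamma$'s; combined with $\beta+\gamma>\pi$ (so the only candidate $\beta^b\gamma^b$ is $\beta\gamma$, which is not a vertex since $\beta+\gamma<2\pi$) and the exclusion of $\alpha^3$, this kills all pure vertices. Hence $\AVC\equiv\{\alpha\beta\gamma\}$.

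Now every vertex is $\alpha\beta\gamma$, so each has degree $3$ and carries exactly one pentagon-corner and two rhombus-corners. Writing $f_1,f_2$ for the numbers of pentagons and rhombi and $V,E$ for the numbers of vertices and edges, counting corners gives $V=5f_1=2f_2$, counting edge--face incidences gives $2E=5f_1+4f_2$, and the degree condition gives $2E=3V$; substituting these into $V-E+F=2$ yields $V=10$, $f_1=2$, $f_2=5$, so $f=7=m+2$. To reconstruct, take a pentagon $P$: each of its five vertices carries a single $\alpha$, so no edge of $P$ is shared with the other pentagon, hence $P$ is surrounded by a ring of five distinct rhombi, and since there are only five rhombi the second pentagon is surrounded by the same ring --- this is the combinatorial type of the prism. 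A short local check (along each edge of $P$ the adjacent rhombus contributes $\beta$ at one endpoint and $\gamma$ at the other, and consecutive corners of a rhombus alternate between $\beta$ and $\gamma$) shows the ring closes consistently with no parity obstruction, and the type is realised by the spherical prism whose two regular pentagons and five congruent rhombi have angles satisfying \eqref{Eq-cot-al-be-ga} with $m=5$ and $\alpha+\beta+\gamma=2\pi$, which is the tiling in Figure \ref{Fig-a5-a4-Tiling-albega}.

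I expect the main obstacle to be the first step: excluding the higher-degree pure vertices such as $\alpha\gamma^c$, $\beta\gamma^c$, $\alpha^2\gamma^c$ without an exhausting enumeration. The Counting Lemma is what makes this short, but it must be dovetailed with the surrounding degree-$3$ case analysis so that the few degenerate angle coincidences it does not eliminate directly ($\beta=\tfrac23\pi$ and $\alpha=\beta$) are seen to be covered by the neighbouring propositions; once the $\AVC$ is settled, the tile count and the reconstruction are routine.
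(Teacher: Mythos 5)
Your overall strategy (pin down $\alpha\beta\cdots=\alpha\beta\gamma$, reduce everything else to a handful of pure vertices, apply the Counting Lemma, then count tiles and reconstruct the prism) is sound in outline and would be shorter than the paper's argument, but the step where you eliminate $\beta^3$, $\beta^2\gamma$ and $\alpha^3$ is circular. Propositions \ref{Prop-a5-a4-be3}, \ref{Prop-a5-a4-albe2} and \ref{Prop-a5-a4-be2ga} (and likewise Proposition \ref{Prop-a5-a4-al3} for $\alpha^3$) are proved \emph{after} and \emph{on the strength of} Proposition \ref{Prop-a5-a4-albega}: their proofs explicitly say ``by Proposition \ref{Prop-a5-a4-albega} we may assume no $\alpha\beta\gamma$'', or use $\beta>\alpha>\gamma$, which is only justified because the case $\alpha\ge\beta$ forces $\alpha\beta\gamma$ and is delegated to the present proposition. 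Even ignoring the ordering, those propositions only classify tilings \emph{under the hypothesis that $\alpha\beta\gamma$ is not a vertex}; the fact that the tilings they produce lack $\alpha\beta\gamma$ does not show that $\beta^3$ or $\beta^2\gamma$ cannot coexist with $\alpha\beta\gamma$. Nor can these vertices be killed by angle arithmetic: $\beta^3$ together with $\alpha\beta\gamma$ (giving $\beta=\tfrac23\pi$, $\alpha+\gamma=\tfrac43\pi$) and $\beta^2\gamma$ together with $\alpha\beta\gamma$ (giving $\alpha=\beta$) are both numerically compatible with \eqref{Eq-cot-al-be-ga}. The paper instead removes them \emph{inside} this proof by local combinatorial deductions: the $\alpha\vert\gamma$ chain around the pentagon forces $\alpha\vert\gamma\cdots=\alpha\beta\gamma$ (killing $\alpha^a\gamma^c$), propagation of $\alpha^2\cdots=\alpha^3$ reproduces the monohedral dodecahedron (killing $\alpha^3$), and assuming some $\beta\vert\gamma$ vertex of the rhombus ring is $\beta^2\gamma$ produces a rhombus with two adjacent $\beta$'s (killing $\beta^2\gamma$, and with it $\beta^3$, $\gamma^c$, $\beta\gamma^c$). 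You need an argument of this kind in place of the citations.

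Two smaller points. First, once $\AVC\equiv\{\alpha\beta\gamma\}$ is actually established, your Euler count and ring reconstruction are fine and essentially match the paper's endgame. Second, the proposition implicitly includes existence: the paper devotes a Geometric Realisation subsection to producing the spherical prism (an Intermediate Value Theorem argument locating the upper pentagon's vertices on a latitude so that the connecting arcs have length $x$), whereas you merely assert that the combinatorial type ``is realised''; that assertion also needs a proof.
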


The tiling is given by the prism with $2$ pentagons and $5$ rhombi. The tiling is the first one in Figure \ref{Fig-am-a4-Tilings-prism}.


We remark that $\alpha\ge\beta>\gamma$ implies $\alpha\beta\cdots=\alpha\beta\gamma$ for the following reason. If $\alpha\ge\beta$, then by $\beta+\gamma>\pi$ we get $R(\alpha\beta) \le R(\beta^2)<2\gamma$. By $\alpha,\beta>\gamma$, we have $\alpha\beta\cdots=\alpha\beta\gamma$. Since Lemma \ref{Lem-albe-alga} implies that $\alpha\beta\cdots$ is a vertex, for the studies where $\alpha\beta\gamma$ is not a vertex, it suffices to consider $\beta > \alpha > \gamma$. 

\begin{proof} By $\alpha\beta\gamma$, we have $R(\alpha\beta)=\gamma$. Then by $\alpha, \beta>\gamma$, we get $\alpha\beta\cdots=\alpha\beta\gamma$. Meanwhile, by $\beta+\gamma > \pi$, we also have $R(\beta^2)<2\gamma$. Then we get $\beta^2\cdots=\beta^3, \beta^2\gamma$. 

By $\alpha\beta\cdots=\alpha\beta\gamma$ and $\beta^2\cdots=\beta^3, \beta^2\gamma$, we get $\beta\cdots=\alpha\beta\gamma, \beta^3, \beta^2\gamma, \beta\gamma^c$. The other vertices consist of only $\alpha, \gamma$, which are $\alpha^a, \alpha^a\gamma^c, \gamma^c$. By $\alpha>\frac{3}{5}\pi$, we further get $\alpha^a=\alpha^3$. Hence we have the vertices below
\begin{align*}
\alpha^3, \beta^3, \beta^2\gamma, \alpha\beta\gamma, \gamma^c, \alpha^a\gamma^c, \beta\gamma^c.
\end{align*}

The arrangement $\alpha\vert\gamma$ determines tiles $T_1, T_2$ in the first picture of Figure \ref{Fig-a5-a4-Tiling-albega}. Then $\alpha_1\beta_2\cdots=\alpha\beta\gamma$ determines $T_3$. By the same argument, we further determine $T_4, T_5, T_6$, which implies $\alpha\vert\gamma\cdots=\alpha\beta\gamma$. Then $\alpha^a\gamma^c$ is not a vertex and $\alpha^2\cdots=\alpha^3$.

\begin{figure}[h!] 
\centering
\begin{tikzpicture}

\tikzmath{
\s=1;
\r=0.85;
\g=5;
\ph=360/\g;
\ps=360/6;
}

\begin{scope}[]

\foreach \p in {0,...,4} {

\draw[rotate=\p*\ph]
	(90-1*\ph:\r) -- (90:\r)
	(90:\r) -- (90:2*\r)
	(90-1*\ph:2*\r) -- (90:2*\r)
;

\node at (90+\p*\ph:0.7*\r) {\small $\alpha$};

\node at (90-0.15*\ph+\p*\ph:1.175*\r) {\small $\beta$};
\node at (90+0.15*\ph+\p*\ph:1.1*\r) {\small $\gamma$};

\node at (90-0.09*\ph+\p*\ph:1.65*\r) {\small $\gamma$};
\node at (90+0.1*\ph+\p*\ph:1.615*\r) {\small $\beta$};


}

\foreach \aa in {-1, 1} {

\draw[xscale=\aa]
	(90-\ph:2*\r) -- ([shift={(90-\ph:2*\r)}]45:0.5*\r)
	(90-\ph:2*\r) -- ([shift={(90-\ph:2*\r)}]-45:0.5*\r)
;

\node at (90- \aa*0.9*\ph:2.1*\r) {\small $\gamma$};
\node at (90- \aa*1.15*\ph:2*\r) {\small $\gamma$};

\node at (90-\aa*\ph:2.35*\r) {\small $\vdots$};

}

\node at (90:2.25*\r) {\small $\beta$};
\node at (90-2*\ph:2.25*\r) {\small $\beta$};
\node at (90+2*\ph:2.25*\r) {\small $\beta$};

\node[inner sep=1,draw,shape=circle] at (0,0) {\small $1$};
\node[inner sep=1,draw,shape=circle] at (90+0.5*\ph:1.25*\r) {\small $2$};
\node[inner sep=1,draw,shape=circle] at (90+1.5*\ph:1.25*\r) {\small $3$};
\node[inner sep=1,draw,shape=circle] at (90+2.5*\ph:1.25*\r) {\small $4$};
\node[inner sep=1,draw,shape=circle] at (90+3.5*\ph:1.25*\r) {\small $5$};
\node[inner sep=1,draw,shape=circle] at (90+4.5*\ph:1.25*\r) {\small $6$};

\node[inner sep=1,draw,shape=circle] at (90:2.75*\r) {\small $7$};
\node[inner sep=1,draw,shape=circle] at (90+2.5*\ph:2*\r) {\small $8$};

\end{scope}

\begin{scope}[xshift=4.5*\s cm, yshift=1*\s cm] 

\foreach \p in {2,3,4,5} {

\draw[rotate=\p*\ps]
	(90-1*\ps:\r) -- (90:\r)
;

}

\foreach \p in {-1,0,1} {

\draw[rotate=\p*\ps]
	(270:\r) -- (270:2*\r)
;

\node at (282.5+\p*\ps:1.15*\r) {\small $\alpha$};
\node at (257.5+\p*\ps:1.15*\r) {\small $\alpha$};

}

\node at (270:0.7*\r) {\small $\alpha$};
\node at (270-\ps:0.7*\r) {\small $\alpha$};
\node at (270+\ps:0.7*\r) {\small $\alpha$};

\node[inner sep=1,draw,shape=circle] at (0,0) {\small $1$};
\node[inner sep=1,draw,shape=circle] at (270+0.5*\ph:2*\r) {\small $2$};
\node[inner sep=1,draw,shape=circle] at (270-0.5*\ph:2*\r) {\small $3$};

\end{scope} 

\begin{scope}[xshift=9*\s cm] 

\foreach \p in {0,...,4} {

\draw[rotate=\p*\ph]
	(90-1*\ph:\r) -- (90:\r)
	(90:\r) -- (90:2*\r)
	(90-1*\ph:2*\r) -- (90:2*\r)
;

\node at (90+\p*\ph:0.7*\r) {\small $\alpha$};

\node at (90-0.15*\ph+\p*\ph:1.175*\r) {\small $\beta$};
\node at (90+0.15*\ph+\p*\ph:1.1*\r) {\small $\gamma$};

\node at (90-0.09*\ph+\p*\ph:1.65*\r) {\small $\gamma$};
\node at (90+0.1*\ph+\p*\ph:1.615*\r) {\small $\beta$};

\node at (90+\p*\ph:2.2*\r) {\small $\alpha$};

}

\end{scope}

\end{tikzpicture}
\caption{Construction of the tiling with $\alpha\beta\gamma$}
\label{Fig-a5-a4-Tiling-albega}
\end{figure}
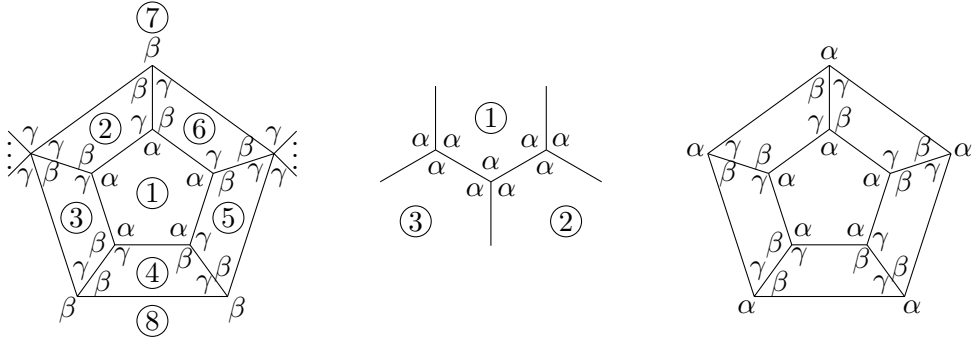

By $\alpha^2\cdots=\alpha^3$, starting at an $\alpha^3$, we get tiles $T_1, T_2, T_3$ in the second picture of Figure \ref{Fig-a5-a4-Tiling-albega} and the starting vertex is $\alpha_1\alpha_2\alpha_3$. Then by $\alpha^2\cdots=\alpha^3$, we also know the two adjacent vertices $\alpha_1\alpha_2\cdots, \alpha_1\alpha_3\cdots = \alpha^3$. Repeating this process, we determine a monohedral tiling given by the dodecahedron. 

So $\alpha^3$ is not a vertex of dihedral tiling and hence it remains to discuss the following,
\begin{align*}
\beta^3, \beta^2\gamma, \alpha\beta\gamma, \gamma^c, \beta\gamma^c.
\end{align*}
Since $\alpha$ appears at some vertex, we know that $\alpha\beta\gamma$ is a vertex and $\alpha\beta\cdots=\alpha\gamma\cdots=\alpha\beta\gamma$.

Starting at an $\alpha\beta\gamma$, by $\alpha\beta\cdots=\alpha\gamma\cdots=\alpha\beta\gamma$ we determine $T_1, T_2, ..., T_6$ in the first picture of Figure \ref{Fig-a5-a4-Tiling-albega}. If the undetermined $\beta_2\gamma_6\cdots=\beta^2\gamma$, then we determine $T_7$. By $\beta_3\gamma_2\gamma_7\cdots, \beta_6\gamma_5\gamma_7\cdots=\beta\gamma^c$, we further determine $T_8$ with two adjacent $\beta$'s, a contradiction. This means that none of the undetermined $\beta\vert\gamma\cdots$ at $T_2, ..., T_6$ is $\beta^2\gamma$. Hence they must be $\alpha\beta\gamma$. This determines a tiling in the third picture of Figure \ref{Fig-a5-a4-Tiling-albega} with
\begin{align*}
\AVC \equiv \{ \alpha\beta\gamma \}.
\end{align*}
We remark that $\beta^3, \beta^2\gamma, \gamma^c, \beta\gamma^c$ are not vertices for the dihedral tiling. 

\subsubsection*{Geometric Realisation}

It remains to show that the tiling exists. It suffices to show that for the two regular pentagons with edge length $x$ placed equal distance (denoted by $h$) away from the equator in the north and south hemispheres respectively, one can subdivide the cylinder into five rhombi with length $x$.

Consider a lune defined by the north pole $N$ and the south pole $S$ and $A_1, A_2, B_1,$ $B_2$ where the arcs $A_1A_2, B_1B_2$ have lengths $x$ and $A_1, A_2$ (and respectively $B_1, B_2$) are equal distance from the equator (dotted line) in Figure \ref{Fig-a5-a4-Geom-albega-Lune}, i.e., $NA_1 = NA_2=SB_1=SB_2=r$ for some $r \in (0,\frac{1}{2}\pi)$. Then we have $h=\pi-2r$.

\begin{figure}[h!] 
\centering
\begin{tikzpicture}[>=latex,scale=1]

\begin{scope}[xshift=0cm] 

\tikzmath{
\r=2; \R=sqrt(3);
\a=72; \aa=\a/2;
\xP=0; \yP=\R;
\aOP=60; 
\aPOne=\aOP;
\mPOne=tan(90+\aPOne);
\aPTwo=\aOP;
\mPTwo=tan(-90-\aPTwo);
\xCQ=-1.15; \yCQ=0.8;
\xCOne = 1; \yCOne=0;
\xCTwo = -1; \yCTwo=0;
\b=120; \bb=\b/2;
\aOPP=120; 
\aPThree=\aOPP;
\mPThree=tan(90+\aPThree);
\aPFour=\aOPP;
\mPFour=tan(-90-\aPFour);
\xCQQ=-4.5; \yCQQ=-0.2;
}

\coordinate (O) at (0,0);
\coordinate (C1) at (1,0);
\coordinate (C2) at (-1,0);


	(O) circle ({sqrt(3)});
	
	(C2) circle (\r)
	(C1) circle (\r);

\draw[gray!50]
	([shift={(-60:2)}]-1,0) arc (-60:60:2);

\draw[gray!50]
	([shift={(-60+180:2)}]1,0) arc (-60+180:60+180:2);

\pgfmathsetmacro{\xPOne}{ ( 2*((\mPOne)*(\yCOne)+(\xCOne)) - sqrt( ( 2*( (\mPOne)*(\yCOne)+\xCOne )  )^2 - 4*( (\mPOne)^2 + 1 )*( (\xCOne)^2 + (\yCOne)^2 - \r^2 ) ) )/( 2*( (\mPOne)^2+1 ) ) };
\pgfmathsetmacro{\yPOne}{ \mPOne*\xPOne };

\pgfmathsetmacro{\xPTwo}{ ( 2*((\mPTwo)*(\yCTwo)+(\xCTwo) ) + sqrt( ( 2*( (\mPTwo)*\yCTwo+\xCTwo )  )^2 - 4*( (\mPTwo)^2 + 1 )*( (\xCTwo)^2 + (\yCTwo)^2 - \r^2 ) ) )/( 2*( (\mPTwo)^2+1 ) ) };
\pgfmathsetmacro{\yPTwo}{ \mPTwo*\xPTwo };

\pgfmathsetmacro{\xPPOne}{ ( 2*((\mPOne)*(\yCOne)+(\xCOne)) + sqrt( ( 2*( (\mPOne)*(\yCOne)+\xCOne )  )^2 - 4*( (\mPOne)^2 + 1 )*( (\xCOne)^2 + (\yCOne)^2 - \r^2 ) ) )/( 2*( (\mPOne)^2+1 ) ) };
\pgfmathsetmacro{\yPPOne}{ \mPOne*\xPPOne };

\pgfmathsetmacro{\xPPTwo}{ ( 2*((\mPTwo)*(\yCTwo)+(\xCTwo) ) - sqrt( ( 2*( (\mPTwo)*\yCTwo+\xCTwo )  )^2 - 4*( (\mPTwo)^2 + 1 )*( (\xCTwo)^2 + (\yCTwo)^2 - \r^2 ) ) )/( 2*( (\mPTwo)^2+1 ) ) };
\pgfmathsetmacro{\yPPTwo}{ \mPTwo*\xPPTwo };

\pgfmathsetmacro{\xPQRef}{\xPTwo};
\pgfmathsetmacro{\yPQRef}{\yPTwo};

\pgfmathsetmacro{\dPRefP}{ sqrt( (\xPQRef - \xP)^2 + (\yPQRef - \yP)^2 ) };
\pgfmathsetmacro{\aCPRef}{ acos( (2*\r^2 - (\dPRefP)^2 )/(2*\r^2) ) };
\pgfmathsetmacro{\l}{ \aCPRef/\a };
\pgfmathsetmacro{\rQ}{ sqrt( \R^2 + (\xCQ)^2  )  };
\pgfmathsetmacro{\aPCQ}{ acos( (\xCQ)/(\rQ) ) }
\pgfmathsetmacro{\aQ}{ -( 360 -  2*\aPCQ )*(1-\l) };


\pgfmathsetmacro{\xPThree}{ ( 2*((\mPThree)*(\yCOne)+(\xCOne)) - sqrt( ( 2*( (\mPThree)*(\yCOne)+\xCOne )  )^2 - 4*( (\mPThree)^2 + 1 )*( (\xCOne)^2 + (\yCOne)^2 - \r^2 ) ) )/( 2*( (\mPThree)^2+1 ) ) };
\pgfmathsetmacro{\yPThree}{ \mPThree*\xPThree};

\pgfmathsetmacro{\xPFour}{ ( 2*((\mPFour)*(\yCTwo)+(\xCTwo) ) + sqrt( ( 2*( (\mPFour)*\yCTwo+\xCTwo )  )^2 - 4*( (\mPFour)^2 + 1 )*( (\xCTwo)^2 + (\yCTwo)^2 - \r^2 ) ) )/( 2*( (\mPFour)^2+1 ) ) };
\pgfmathsetmacro{\yPFour}{ \mPFour*\xPFour};

\pgfmathsetmacro{\xPPThree}{ ( 2*((\mPThree)*(\yCOne)+(\xCOne)) + sqrt( ( 2*( (\mPThree)*(\yCOne)+\xCOne )  )^2 - 4*( (\mPThree)^2 + 1 )*( (\xCOne)^2 + (\yCOne)^2 - \r^2 ) ) )/( 2*( (\mPThree)^2+1 ) ) };
\pgfmathsetmacro{\yPPThree}{ \mPThree*\xPPThree };

\pgfmathsetmacro{\xPPFour}{ ( 2*((\mPFour)*(\yCTwo)+(\xCTwo) ) - sqrt( ( 2*( (\mPFour)*\yCTwo+\xCTwo )  )^2 - 4*( (\mPFour)^2 + 1 )*( (\xCTwo)^2 + (\yCTwo)^2 - \r^2 ) ) )/( 2*( (\mPFour)^2+1 ) ) };
\pgfmathsetmacro{\yPPFour}{ \mPFour*\xPPFour };

\pgfmathsetmacro{\xPQRef}{\xPTwo};
\pgfmathsetmacro{\yPQRef}{\yPTwo};


\coordinate (CQ) at (\xCQ, \yCQ);


\coordinate (P) at (0,{sqrt(3)});

\coordinate[rotate around={\aQ:(CQ)}] (Q) at (P);


\coordinate (PP) at (0,{-sqrt(3)});

\coordinate (P1) at (\xPOne, \yPOne);
\coordinate (P2) at (\xPTwo, \yPTwo);

\coordinate (PP1) at (\xPPOne,\yPPOne);
\coordinate (PP2) at (\xPPTwo,\yPPTwo);

\coordinate (P3) at (\xPThree, \yPThree);
\coordinate (P4) at (\xPFour, \yPFour);

\coordinate (PP3) at (\xPPThree,\yPPThree);
\coordinate (PP4) at (\xPPFour,\yPPFour);

\coordinate (X) at (0.25*\xPTwo, \yPTwo);

\node at (P3) {$\cdot$};




	(P1) -- (PP1)
	(P2) -- (PP2)
;

\arcThroughThreePoints[dashed]{P2}{PP}{P};



\arcThroughThreePoints[]{P2}{PP1}{P1};

\arcThroughThreePoints[]{P3}{PP3}{P4};

\arcThroughThreePoints[dashed, gray!50]{X}{PP3}{P3};

\arcThroughThreePoints[dashed, gray!50]{X}{PP}{P};

\arcThroughThreePoints[dashed]{P1}{PP}{P3};

\draw[gray]
	(P2) -- (P3)
; 

\draw[gray!50, dashed]
	(P1) -- (P2)
;

\node [shape = circle, fill = black, minimum size = 0.12 cm, inner sep=0pt] at (P1) {};
\node [shape = circle, fill = black, minimum size = 0.12 cm, inner sep=0pt] at (P2) {};
\node [shape = circle, fill = black, minimum size = 0.12 cm, inner sep=0pt] at (P3) {};
\node [shape = circle, fill = black, minimum size = 0.12 cm, inner sep=0pt] at (P4) {};
\node [shape = circle, draw=gray, fill = white, minimum size = 0.12 cm, inner sep=0pt] at (X) {};

\draw[gray, dotted]
	(180:\R) -- (0:\R)
;

\draw[gray!50] ([shift=(90:\R)]212:0.15) arc (212:286:0.15)
;

\node at (90: 2.0) {\small $N$};
\node at (270: 2.0) {\small $S$};
\node at (155: 1.35) {\small $A_1$};
\node at (25: 1.35) {\small $A_2$};

\node at (205: 1.35) {\small $B_1$};
\node at (-25: 1.35) {\small $B_2$};

\node at ([shift={(Q)}]-0.4,-0.15) {\small \textcolor{gray}{$X$}};

\node at (60: 1.5) {\small $r$};
\node at (180: 1.2) {\small $h$};
\node at (270-30: 1.5) {\small \textcolor{gray}{$r$}};
\node at (75: 1.15) {\small \textcolor{gray}{$r$}};


\node at (95: 1.4) {\footnotesize \textcolor{gray}{$\xi$}};
\node at (125: 1.05) {\footnotesize $\theta'$};
\node at (155: 0.9) {\footnotesize $\theta$};
\node at (200: 0.9) {\footnotesize $\psi$};
\node at (42: 0.75) {\footnotesize $\varphi$};

\end{scope}

\end{tikzpicture}
\caption{Geometric realisation of tiles}
\label{Fig-a5-a4-Geom-albega-Lune}
\end{figure}
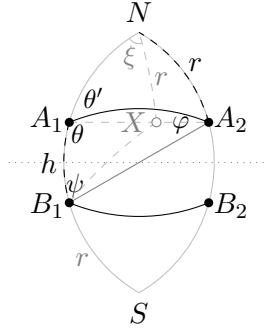

Let $\lambda (A_1A_2)$ denote segment of latitude defined by $A_1, A_2$. For $h<x$, we show that there is an $X_1 \in \lambda(A_1A_2)$ such that $B_1X_1= x$. By $r<\frac{1}{2}\pi$, we have $\theta' < \frac{1}{2}\pi$ and therefore $\theta > \frac{1}{2}\pi$. For $h<x<\pi$ and $A_2B_1<\pi$, the triangle $\triangle A_1A_2B_1$ is a standard triangle and hence $x>h$ implies $\psi>\phi$. Meanwhile, the arc $A_2B_1$ divides the lune into two identical halves. By the rotational symmetry of the subdivided lune, we know that $\psi < \frac{1}{2}\pi$. Then we have $\theta>\frac{1}{2}\pi>\psi>\phi$ and hence $A_2B_1 > x > h$. For $X \in \lambda(A_1, A_2)$, by $h=\pi-2r$ and cosine law on $\triangle B_1NX$, we get $\cos B_1X = \cos(h+r)\cos r + \sin(h+r) \sin r \cos \xi = - \cos^2 r + \sin^2 r \cos \xi$. For each fixed $r$, the length of $B_1X$ is a continuous function of $\xi$ such that $B_1X=h$ when $\xi=0$ and $B_1X = B_1A_2$ when $\xi=\frac{2}{m}\pi$ where $m=5$. Intermediate Value Theorem implies that there is a point $X_1 \in \lambda (A_1A_2)$ (given by $\xi_1 \in (0, \frac{2}{m}\pi)$) such that $B_1X_1=x$. By repeating the same process for each pair of vertices $B_i, B_{i+1}$ for the pentagon in the lower hemisphere, we locate the vertices $X_1, ..., X_5$ for the pentagon in the upper hemisphere and the rhombi are defined by edges $B_1X_1, B_2X_2, ..., B_5X_5$. Since the interiors of each lune are disjoint, the $B_iX_i$'s do not intersect. Therefore we can in fact obtain simple rhombi.

Note that the same construction works for all regular polygons with gonality $m\ge3$. Cosine law on $\triangle NA_1A_2$ gives $\cos x = \cos^2 r + \sin^2 r \cos \frac{2}{m}\pi$. For $\pi-2r = h < x$, it further implies $r > \cot^{-1} \sin \frac{1}{m}\pi$. Hence for each $m$, the construction works for any choice of $r \in (\cot^{-1} \sin \frac{\pi}{m}, \frac{1}{2}\pi)$.
\end{proof}

\begin{prop}\label{Prop-a5-a4-al2ga} There is no dihedral tiling with vertex $\alpha^2\gamma$.
\end{prop}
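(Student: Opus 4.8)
The plan is to suppose, for contradiction, that the tiling (necessarily with $m=5$) has a vertex $\alpha^2\gamma$, so $2\alpha+\gamma=2\pi$, and then to split according to Lemma~\ref{Lem-albe-alga}. First I would record two routine reductions. Since $\beta>\gamma$ we have $2\alpha+\beta=2\pi+(\beta-\gamma)>2\pi$; since $\alpha>\gamma$ we have $3\alpha>2\pi$; and $2\alpha+2\gamma=2\pi+\gamma>2\pi$; hence $\alpha^2\cdots=\alpha^2\gamma$. Next, Lemma~\ref{Lem-albe-alga} says $\alpha\beta\cdots$ is a vertex, and writing it as $\alpha^a\beta^b\gamma^c$ and using $\alpha>\tfrac{3}{5}\pi$, $\beta>\tfrac{1}{2}\pi$, $\beta+\gamma>\pi$ and $\alpha+\gamma=2\pi-\alpha>\pi$, one checks that necessarily $a=1$, $b\le2$, and the only surviving possibilities are $\alpha\beta\gamma$ and $\alpha\beta^2$. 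These two cases drive the rest of the argument.

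If $\alpha\beta\gamma$ is a vertex then $\alpha+\beta+\gamma=2\pi=2\alpha+\gamma$ forces $\alpha=\beta$, and by Proposition~\ref{Prop-a5-a4-albega} the tiling is the prism, whose $\AVC$ is $\{\alpha\beta\gamma\}$ --- which contradicts that $\alpha^2\gamma$ is a vertex. So I may assume $\alpha\beta\gamma$ is not a vertex, and therefore $\alpha\beta\cdots=\alpha\beta^2$. Then $\alpha+2\beta=2\pi=2\alpha+\gamma$ gives $2\beta=\alpha+\gamma$, so $\gamma<\beta<\alpha$, and $\beta+\gamma>\pi$ forces $\alpha<\tfrac{4}{5}\pi$. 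A short computation (a vertex $\alpha\gamma^c$ would force $\alpha=\tfrac{2(c-1)}{2c-1}\pi$, which lies outside $(\tfrac{2}{3}\pi,\tfrac{4}{5}\pi)$ for every integer $c\ge2$) then shows that in this case $\alpha\gamma\cdots=\alpha^2\gamma$ as well.

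The core of the proof is then a local walk around the boundary of one pentagon. Pick a pentagon $P$ carrying an $\alpha^2\gamma$ vertex $w_0$; since $\alpha^2\gamma$ consists of two pentagon-corners and one rhombus-$\gamma$-corner, $P$ has one pentagon--pentagon edge and one pentagon--rhombus edge at $w_0$. Label the vertices of $P$ as $w_0,\dots,w_4$ and its edges as $f_i=w_iw_{i+1}$ (indices mod $5$), chosen so that $f_4=P\vert P'$ is the pentagon--pentagon edge at $w_0$ and $f_0=P\vert R$ the pentagon--rhombus edge there, with $R$ contributing $\gamma$ at $w_0$. Transporting vertex types around $\partial P$ --- using that on a pentagon--rhombus edge the rhombus contributes $\gamma$ at one end and $\beta$ at the other (the first picture of Figure~\ref{Fig-adj-am-a4}), and that on a pentagon--pentagon edge both ends are $\alpha^2\cdots=\alpha^2\gamma$ --- one obtains: $w_1$ (far end of $f_0$) is $\alpha\beta\cdots=\alpha\beta^2$, forcing a rhombus $R_1$ with $f_1=P\vert R_1$; $w_2$ (far end of $f_1$) is $\alpha\gamma\cdots=\alpha^2\gamma$, forcing a pentagon $P_3$ with $f_2=P\vert P_3$; and, $f_2$ and $f_4$ being pentagon--pentagon edges, $w_3$ and $w_4$ are both $\alpha^2\gamma$ vertices. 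At each of $w_3,w_4$ the remaining $P$-edge $f_3=w_3w_4$ is then forced to be a pentagon--rhombus edge, so the rhombus $R_3$ lying on $f_3$ has a $\gamma$-corner at \emph{both} endpoints of its own edge $f_3$ --- impossible, since consecutive corners of a rhombus alternate $\beta,\gamma$. This contradiction finishes the proof.

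I expect the only delicate point to be the bookkeeping in the walk: correctly identifying, at each $w_i$, which of $P$'s two incident edges is pentagon--pentagon versus pentagon--rhombus, and reading off the correct vertex type at the far end of each edge. The case split, the angle-sum reductions, and the arithmetic excluding $\alpha\gamma^c$ are all short; the agreeable feature is that once $\alpha\gamma\cdots=\alpha^2\gamma$ is established, simply closing up the boundary of a single pentagon already yields the contradiction, with no global count or Euler-type argument needed.
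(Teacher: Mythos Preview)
Your proof is correct, but it takes a genuinely different route from the paper's. The paper's argument is three lines: from $\alpha>\gamma$ and $\alpha^2\gamma$ one gets $\alpha>\tfrac{2}{3}\pi$; then the remark following Proposition~\ref{Prop-a5-a4-albega} (if $\alpha\beta\gamma$ is not a vertex, one may assume $\beta>\alpha$) gives $\beta>\alpha>\tfrac{2}{3}\pi$; and finally $\alpha+\beta+\gamma>2\alpha+\gamma=2\pi$ contradicts \eqref{Eq-al+be+ga<=2pi}. In other words, the paper never enters your $\alpha\beta^2$ sub-case at all --- the remark disposes of it via the order relation $\beta>\alpha$ before any combinatorics is needed.

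You instead keep the $\alpha\beta^2$ sub-case alive, deduce $\beta<\alpha$ from the two vertex angle sums, pin down $\alpha\gamma\cdots=\alpha^2\gamma$, and kill the configuration by the five-step walk around $\partial P$. That walk is correct: the key checks are that $\alpha^2\cdots=\alpha^2\gamma$ (so every pentagon--pentagon edge forces $\alpha^2\gamma$ at both ends) and that $\alpha\gamma\cdots=\alpha^2\gamma$ (so the edge $f_2$ is pentagon--pentagon), after which the two edges $f_2,f_4$ force $\gamma$ at \emph{both} ends of $f_3$ from the rhombus side. Your approach is longer but entirely self-contained --- it does not lean on the remark's inference that $\alpha\ge\beta$ forces $\alpha\beta\cdots=\alpha\beta\gamma$ --- and gives a nice local picture of why the configuration cannot close up. The paper's approach buys brevity; yours buys transparency.
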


\begin{proof} By $\alpha>\gamma$ and $\alpha^2\gamma$, we get $\alpha>\frac{2}{3}\pi>\gamma$. Lemma \ref{Lem-albe-alga} implies that $\alpha\beta\cdots$ is a vertex. As a consequence of Proposition \ref{Prop-a5-a4-albega} (see the remark), we have $\beta > \alpha>\gamma$. Combined with $\alpha>\frac{2}{3}\pi$, we have $\alpha+\beta+\gamma> 2\alpha + \gamma = 2\pi$, contradicting \eqref{Eq-al+be+ga<=2pi}. 
\end{proof}

\begin{prop}\label{Prop-a5-a4-al3} There is no dihedral tiling with vertex $\alpha^3$.
\end{prop}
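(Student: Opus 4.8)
The plan is to show that, once $\alpha^3$ is a vertex, every vertex carrying two or more $\alpha$'s is exactly $\alpha^3$; by the propagation argument already used in the proof of Proposition \ref{Prop-a5-a4-albega}, this forces the tiling to be the dodecahedron, a monohedral tiling, contradicting the dihedral hypothesis.

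First I would fix the numerology. From $3\alpha=2\pi$ we get $\alpha=\tfrac23\pi$, and $\alpha>(1-\tfrac2m)\pi$ then forces $m<6$, so $m=5$. By Proposition \ref{Prop-a5-a4-albega} the only dihedral tiling with vertex $\alpha\beta\gamma$ is the prism, whose $\AVC$ contains no $\alpha^3$; hence $\alpha^3$ and $\alpha\beta\gamma$ cannot both be vertices, so $\alpha\beta\gamma$ is not a vertex and, by the remark preceding Proposition \ref{Prop-a5-a4-albega}, $\beta>\alpha=\tfrac23\pi$. Two remainder computations follow. Since $R(\alpha^2)=\tfrac23\pi=\alpha<\beta$, a vertex with two $\alpha$'s is $\alpha^3$ or $\alpha^2\gamma^k$ with $k\gamma=\tfrac23\pi$, and $\gamma<\alpha$ forces $k\ge2$ in the latter case; so the proposition reduces to excluding $\alpha^2\gamma^k$ for $k\ge2$. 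And since $R(\alpha\beta)=\tfrac43\pi-\beta<\tfrac23\pi<\alpha,\beta$, we have $\alpha\beta\cdots=\alpha\beta\gamma^j$; here $j=0$ is impossible because $\beta<\pi$ and $j=1$ because $\alpha\beta\gamma$ is not a vertex, so Lemma \ref{Lem-albe-alga} supplies a vertex $\alpha\beta\gamma^j$ with $j\ge2$, i.e.\ $\beta+j\gamma=\tfrac43\pi$.

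The core step is the exclusion of $\alpha^2\gamma^k$, $k\ge2$. Supposing it occurs, $\gamma=\tfrac{2\pi}{3k}$, and substituting into $\beta+j\gamma=\tfrac43\pi$ gives $\beta=\tfrac{2\pi(2k-j)}{3k}$. The inequalities $\beta<\pi$, $\beta>\alpha$, $\beta+\gamma>\pi$ translate to $j>\tfrac k2$, $j<k$, $j<\tfrac{k+2}2$ respectively; the first and third confine $j$ to the open interval $(\tfrac k2,\tfrac k2+1)$, which contains an integer only for $k$ odd, whereupon $j=\tfrac{k+1}2$ and (since $j\ge2$) $k\ge3$. This yields the explicit angles $\gamma=\tfrac{2\pi}{3k}$, $\beta=\pi-\tfrac{\pi}{3k}$, so that $\tfrac\beta2=\tfrac\pi2-\tfrac\pi{6k}$ and $\tfrac\gamma2=\tfrac\pi{3k}=2\cdot\tfrac\pi{6k}$. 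I would then evaluate \eqref{Eq-cot-al-be-ga} with $m=5$, $\alpha=\tfrac23\pi$: its left side is $\cot^2\tfrac\pi3+\cos\tfrac{2\pi}5\csc^2\tfrac\pi3=\tfrac13+\tfrac{\sqrt5-1}3=\tfrac{\sqrt5}3$, while, with $u=\tfrac\pi{6k}$, its right side is $\cot\tfrac\beta2\cot\tfrac\gamma2=\tan u\,\cot 2u=\tfrac{1-\tan^2 u}2\le\tfrac12<\tfrac{\sqrt5}3$, a contradiction. Hence $\alpha^2\gamma^k$ is not a vertex and $\alpha^2\cdots=\alpha^3$.

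To conclude, starting at an $\alpha^3$ vertex and iterating $\alpha^2\cdots=\alpha^3$ as in the proof of Proposition \ref{Prop-a5-a4-albega}, the tiling is the dodecahedron, a monohedral tiling by pentagons, contradicting the dihedral hypothesis. The only subtlety I anticipate is the integer bookkeeping that pins down $(k,j)$ from the three inequalities; after that the identity $\tan u\,\cot 2u=\tfrac12(1-\tan^2 u)$ makes the clash with $\tfrac{\sqrt5}3>\tfrac12$ immediate.
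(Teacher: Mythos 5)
Your proof is correct, but it reaches the contradiction by a genuinely different route from the paper. The paper works on the $\beta$-side: it first shows $\beta^2\cdots=\beta^2\gamma$, kills $\beta^2\gamma$ by substituting $\alpha=\frac{2}{3}\pi$, $\beta=\pi-\frac{1}{2}\gamma$ into \eqref{Eq-cot-al-be-ga}, concludes that every $\beta$-vertex is $\beta\gamma^c$ or $\alpha\beta\gamma^c$, and then invokes the Counting Lemma to force $\beta\cdots=\alpha\beta\gamma$, which is excluded by Proposition \ref{Prop-a5-a4-albega} -- so $\beta$ can appear at no vertex at all. You instead work on the $\alpha$-side: you show the only alternative to $\alpha^2\cdots=\alpha^3$ is $\alpha^2\gamma^k$, eliminate it by pairing it with the forced vertex $\alpha\beta\gamma^j$ and doing the integer bookkeeping that pins down $j=\frac{k+1}{2}$, and then propagate $\alpha^2\cdots=\alpha^3$ into the dodecahedron exactly as in the proof of Proposition \ref{Prop-a5-a4-albega}, contradicting dihedrality. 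Amusingly, your $(k,j)$ analysis lands on $\beta=\pi-\frac{\pi}{3k}=\pi-\frac{1}{2}\gamma$, i.e.\ precisely the relation $2\beta+\gamma=2\pi$ that the paper refutes, and your inequality $\tan u\cot 2u=\frac{1}{2}(1-\tan^2 u)\le\frac{1}{2}<\frac{\sqrt5}{3}$ is the same computation as the paper's $\tan^2\frac{1}{4}\gamma<0$ in disguise; so both arguments ultimately hinge on the same evaluation of \eqref{Eq-cot-al-be-ga}, reached from opposite directions. The paper's version is shorter because the Counting Lemma absorbs the combinatorics; yours avoids the Counting Lemma entirely at the cost of the case analysis on $k$ and $j$, and reuses the dodecahedron propagation already established. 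Both are sound; your appeals to Proposition \ref{Prop-a5-a4-albega} (to rule out $\alpha\beta\gamma$ and hence obtain $\beta>\alpha$) and to $\beta<\pi$ are exactly the facts the paper makes available at this point.
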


\begin{proof} If $\alpha^3$ is a vertex, then we have $\alpha = \frac{2}{3}\pi$ and $\alpha^2\beta$ is not a vertex. Combined with $\beta>\alpha>\gamma$ and $\beta+\gamma>\pi$, we have $R(\beta^2)<\alpha, 2\gamma$. So $\beta^2\cdots=\beta^2\gamma$. 

Suppose both $\alpha^3, \beta^2\gamma$ are vertices. We get
\begin{align*}
\alpha=\tfrac{2}{3}\pi, \quad
\beta = \pi - \tfrac{1}{2}\gamma. 
\end{align*}
Substituting the above into \eqref{Eq-cot-al-be-ga}, we get
\begin{align*}
\tan^2 \tfrac{1}{4}\gamma = 1 - \tfrac{2}{3}(4\cos \tfrac{2}{5}\pi + 1) < 0,
\end{align*}
a contradiction. Then $\beta^2\gamma$ is not a vertex. This further implies that $\beta^2\cdots$ is not a vertex.

By $\beta>\alpha>\gamma$ and $\alpha > \frac{3}{5}\pi$ and $\beta+\gamma>\pi$ and no $\beta^2\cdots$, we get $\beta\cdots=\beta\gamma^c, \alpha\beta\gamma^c$. Counting Lemma implies $\beta\cdots=\alpha\beta\gamma$, which is not a vertex, a contradiction. 
\end{proof}

\begin{prop}\label{Prop-a5-a4-be3} The dihedral tiling with vertex $\beta^3$ is the fourth tiling in Figure \ref{Fig-a5-a4-Archimedean-Tilings}.
\end{prop}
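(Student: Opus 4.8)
The plan is to fix the three angles, read off the complete list of vertices (the $\AVC$), pin down the combinatorial type by a counting argument, reconstruct the tiling, and finally verify geometric realisability.

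First I would pin down the angles. From the vertex $\beta^3$ we get $\beta=\tfrac23\pi$, and $\beta+\gamma>\pi$ gives $\gamma>\tfrac13\pi$. We already know $\alpha>\gamma$, and I would rule out $\alpha\ge\beta$: by the remark after Proposition~\ref{Prop-a5-a4-albega} together with Lemma~\ref{Lem-albe-alga}, $\alpha\ge\beta$ would make $\alpha\beta\gamma$ a vertex, forcing the prism of Proposition~\ref{Prop-a5-a4-albega}, whose $\AVC\equiv\{\alpha\beta\gamma\}$ contains no $\beta^3$. Hence $\beta>\alpha>\gamma$, and then $\alpha<\beta=\tfrac23\pi$ shows $\alpha\beta\gamma$ is not a vertex (otherwise $\gamma=2\pi-\alpha-\beta>\beta$). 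So the vertex $\alpha\beta\cdots$ supplied by Lemma~\ref{Lem-albe-alga} is longer; since $R(\alpha\beta)=\tfrac43\pi-\alpha<\tfrac{11}{15}\pi$ while $3\gamma>\pi$, exactly two further angles occur there, and checking the finitely many completions against $\alpha>\tfrac35\pi$ and $\gamma>\tfrac13\pi$ leaves only $\alpha\beta\cdots=\alpha\beta\gamma^2$. Thus $\alpha+2\gamma=\tfrac43\pi$, whence $\gamma\in(\tfrac13\pi,\tfrac{11}{30}\pi)$ and $\alpha\in(\tfrac35\pi,\tfrac23\pi)$; feeding these bounds into $a\alpha+b\beta+c\gamma=2\pi$ (e.g.\ substituting $\alpha=\tfrac43\pi-2\gamma$ and comparing the coefficient of $\gamma$) shows $\AVC=\{\beta^3,\alpha\beta\gamma^2\}$.

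Next I would count. Writing $n,r$ for the numbers of pentagons and rhombi and $p,q$ for the numbers of $\beta^3$ and $\alpha\beta\gamma^2$ vertices, and using that every $\alpha$ sits at an $\alpha\beta\gamma^2$ vertex, counting $\alpha$-, $\beta$- and $\gamma$-copies over all vertices gives $5n=q$, $3p+q=2r$ and $2q=2r$; with Euler's formula ($F=n+r$, $V=p+q$, $2E=5n+4r$) this forces $n=12$, $r=60$, $p=20$, $q=60$, $f=72$. Then comes the reconstruction, which I expect to be the main obstacle. No two pentagons are adjacent (no vertex carries two $\alpha$'s), and the three rhombi around each $\beta^3$ vertex form a hexagonal patch. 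A key point is that no rhombus can have \emph{both} of its $\beta$-corners at $\beta^3$ vertices: the common $\gamma$-corner of the two resulting patches would then see at least $3\gamma$, impossible at an $\alpha\beta\gamma^2$ vertex. Hence the $60$ rhombi partition into $20$ patches, every boundary vertex of a patch is an $\alpha\beta\gamma^2$ vertex, and (since Euler forces every such vertex to have degree $3$) the tiling collapses to an edge-to-edge tiling of the sphere by $12$ pentagons and $20$ hexagons with every vertex of type $5.6.6$, i.e.\ the truncated icosahedron. Finally the subdivision of each hexagon is forced: at an $\alpha\beta\gamma^2$ vertex the $\beta$-rhombus and a $\gamma$-rhombus cannot lie in the same patch (an edge would otherwise run through a rhombus's interior), so the two $\gamma$-rhombi share a patch and each hexagon's apex is joined precisely to its three ``$\gamma^2$-type'' boundary vertices; checking that this assignment is unique up to the symmetry of the tiling produces exactly the fourth tiling of Figure~\ref{Fig-a5-a4-Archimedean-Tilings}. (Alternatively, as in the proof of Proposition~\ref{Prop-a5-a4-albega}, one may start from a $\beta^3$ vertex and propagate the neighbourhoods tile by tile using the $\AVC$.)

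For existence, the angles must satisfy $\beta=\tfrac23\pi$, $\alpha+2\gamma=\tfrac43\pi$ and \eqref{Eq-cot-al-be-ga} with $m=5$; substituting $\alpha=\tfrac43\pi-2\gamma$, the difference of the two sides of \eqref{Eq-cot-al-be-ga} changes sign on $(\tfrac13\pi,\tfrac{11}{30}\pi)$, so there is an admissible value of $\gamma$, and the explicit quadrilateral subdivision of the deformed truncated icosahedron in Figure~\ref{Fig-a5-a4-truncated-icosahedron} realises the tiling on the sphere, completing the proof.
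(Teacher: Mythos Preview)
Your derivation of the $\AVC$ is essentially the paper's, though where you argue with the interval bounds $\alpha\in(\tfrac35\pi,\tfrac23\pi)$, $\gamma\in(\tfrac13\pi,\tfrac{11}{30}\pi)$ the paper simply substitutes $\beta=\tfrac23\pi$, $\alpha+2\gamma=\tfrac43\pi$ into \eqref{Eq-cot-al-be-ga}, obtains the numerical values $\alpha\approx0.61881\pi$, $\gamma\approx0.35726\pi$, and then reads off $\AVC=\{\beta^3,\alpha\beta\gamma^2\}$ directly.  Your ``comparing the coefficient of $\gamma$'' step tacitly uses that, after imposing \eqref{Eq-cot-al-be-ga}, no further rational relation among $\alpha,\beta,\gamma$ holds; the paper's numerical route avoids that subtlety.

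The genuine divergence is in the reconstruction.  The paper never counts: once $\AVC=\{\beta^3,\alpha\beta\gamma^2\}$ is known, it argues locally (Figure~\ref{Fig-a5-a4-AAD-albegac-alga}) that the arrangement $\alpha\vert\gamma\cdots$ forces five rhombi around every pentagon in the pattern $\beta\vert\alpha\vert\gamma$, and then that each $\gamma\vert\gamma\cdots$ forces an adjacent $\beta\vert\beta\cdots=\beta^3$; propagating outward yields the tiling.  Your route is global and structural: Euler plus the angle counts give $12$ pentagons, $60$ rhombi, $20$ $\beta^3$'s, $60$ $\alpha\beta\gamma^2$'s; the ``no rhombus has both $\beta$'s at $\beta^3$'' observation lets you collapse the $60$ rhombi into $20$ hexagonal patches and recognise the truncated icosahedron; finally you argue the spoke pattern in each hexagon is forced.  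This is correct and more conceptual --- it explains \emph{why} the answer is the football subdivision --- but two steps are thinner than they look.  First, that every rhombus has \emph{at least} one $\beta$ at a $\beta^3$ vertex (so the patches really cover) needs the count $3\cdot20=60=$ number of rhombi, which you should state.  Second, the uniqueness of the $A$/$B$ (``$\gamma^2$-type'' versus ``$\beta$-type'') alternation on the hexagons is not immediate: it is a global consistency condition on the hexagon--adjacency graph (the dodecahedron graph), and one must check that propagation around each pentagonal $5$-cycle closes up, which it does, leaving two global choices related by a reflection.  Filling those in, your argument goes through; the paper's local propagation is shorter but gives less structural insight.
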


\begin{proof} By $\beta^3$, we get $\beta=\frac{2}{3}\pi$. Combined with $\alpha>\tfrac{3}{5}\pi$ and $\beta>\gamma$ and $\beta+\gamma>\pi$, we get the following,
\begin{align*}
\alpha> \tfrac{3}{5}\pi, \quad
\beta=\tfrac{2}{3}\pi, \quad
\gamma>\tfrac{1}{3}\pi.
\end{align*}

Lemma \ref{Lem-albe-alga} implies that $\alpha\beta\cdots$ is a vertex. The above inequalities imply $R(\alpha\beta)<\frac{11}{15}\pi<\alpha+\gamma, 3\gamma$. Meanwhile, by Proposition \ref{Prop-a5-a4-albega}, we may assume no $\alpha\beta\gamma$. Combined with $\beta>\alpha>\gamma$ and $\beta^3$, we get $\alpha\beta\cdots=\alpha\beta\gamma^2$. Then $\beta^3, \alpha\beta\gamma^2$ imply
\begin{align*}
\alpha + 2\gamma = \tfrac{4}{3}\pi, \quad
\beta = \tfrac{2}{3}\pi.
\end{align*}
Substituting the above into \eqref{Eq-cot-al-be-ga}, we get
\begin{align*}
\alpha \approx 0.61881\pi, \quad
\beta = \tfrac{2}{3}\pi, \quad
\gamma \approx 0.35726\pi, \quad
x \approx 0.12943\pi.
\end{align*}
The above angle values determine all vertices. Hence we get
\begin{align}\label{Eq-AVC-b3-albega2}
\AVC = \{ \beta^3, \alpha\beta\gamma^2 \}.
\end{align}
From the above, we have $\alpha\vert\gamma\cdots=\alpha\beta\gamma^2$. 

The arrangement of $\alpha\vert\gamma\cdots$ determines tiles $T_1, T_2$ in Figure \ref{Fig-a5-a4-AAD-albegac-alga}. Then $\alpha_1\beta\cdots=\alpha\beta\gamma^c$ determines $T_3$. Repeating the same argument we further determine $T_4,T_5, T_6$. Hence $\alpha\vert\gamma\cdots = \beta \vert \alpha \vert \gamma\cdots =\alpha\beta\gamma^2$. 

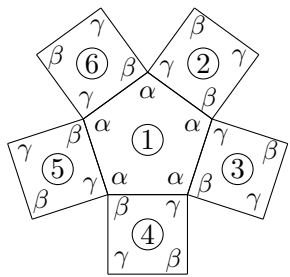
\begin{figure}[h!] 
\centering
\begin{tikzpicture}

\tikzmath{
\s=1;
\r=0.9;
\g=5;
\ph=360/\g;
\x=\r*cos(\ph/2);
\y=\r*sin(\ph/2);
\rr=2*\y/sqrt(2);
\h=4;
\th=360/\h;
\xx=\rr*cos(\th/2);
}

\begin{scope}

\foreach \p in {0,...,4} {

\draw[rotate=\p*\ph]
	(90-1*\ph:\r) -- (90:\r)
;

\foreach \q in {0,...,3} {

\tikzset{shift={(90-0.5*\ph+\p*\ph:\x+\xx)}}

\draw[rotate=-0.5*\ph+\p*\ph+\q*\th]
	(1.5*\th:\rr) -- (0.5*\th:\rr)
;

}

\foreach \q in {0,2} {

\tikzset{shift={(90-0.5*\ph+\p*\ph:\x+\xx)}}

\node at (1.5*\th+\q*\th-0.5*\ph+\p*\ph:0.65*\rr) {\footnotesize $\beta$};
\node at (0.5*\th+\q*\th-0.5*\ph+\p*\ph:0.65*\rr) {\footnotesize $\gamma$};

}

\node at (90+\p*\ph:0.7*\r) {\footnotesize $\alpha$};

}

\node[inner sep=1,draw,shape=circle] at (0,0) {\small $1$};
\node[inner sep=1,draw,shape=circle] at (90-0.5*\ph:\x+\xx) {\small $2$};
\node[inner sep=1,draw,shape=circle] at (90+3.5*\ph:\x+\xx) {\small $3$};
\node[inner sep=1,draw,shape=circle] at (90+2.5*\ph:\x+\xx) {\small $4$};
\node[inner sep=1,draw,shape=circle] at (90+1.5*\ph:\x+\xx) {\small $5$};
\node[inner sep=1,draw,shape=circle] at (90+0.5*\ph:\x+\xx) {\small $6$};

\end{scope}

\end{tikzpicture}
\caption{The arrangement of $\alpha\vert\gamma$}
\label{Fig-a5-a4-AAD-albegac-alga}
\end{figure}

By $\alpha\vert\gamma\cdots=\beta\vert\alpha\vert\gamma\cdots=\alpha\beta\gamma^2$ around the pentagon and $\gamma\vert\gamma\cdots$ implying $\beta\vert\beta\cdots=\beta^3$, we therefore determine the fourth tiling in Figure \ref{Fig-a5-a4-Archimedean-Tilings}.
\end{proof}

\begin{prop} \label{Prop-a5-a4-albe2} There are three dihedral tilings with vertex $\alpha\beta^2$ in Figures \ref{Fig-a5-a4-Tiling-albe2-albega2-Pen5albegaga}, \ref{Fig-a5-a4-Tilings-albe2-albega2-Pen3albegaga}. 
\end{prop}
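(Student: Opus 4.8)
The plan is to pin down the angles and the $\AVC$, then to rule out the degree‑$5$ vertex $\alpha\gamma^{4}$ by a short local argument, and finally to run an edge‑to‑edge propagation that identifies the combinatorics with that of the snub dodecahedron, the choices of triangle‑pairing being the $1$‑factors of the dodecahedron graph.

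First I would nail down the numerology. Since $\alpha\beta^{2}$ is a vertex, $\alpha+2\beta=2\pi$; as noted in Section \ref{Sec-basic} this already forces $m=5$, and together with $\alpha>\tfrac{3}{5}\pi$, $\beta>\alpha>\gamma$ (Proposition \ref{Prop-a5-a4-albega}), $\beta+\gamma>\pi$ and \eqref{Eq-al+be+ga<=2pi} it confines the angles to $\tfrac{3}{5}\pi<\alpha<\tfrac{2}{3}\pi<\beta<\tfrac{7}{10}\pi$ with $\gamma$ correspondingly pinched. From $\alpha+2\beta=2\pi$ one reads off $R(\alpha\beta)=\beta$ and $R(\beta^{2})=\alpha$; checking which combinations of the available angles fill these remainders gives $\alpha\beta\cdots=\alpha\beta^{2}$ or $\alpha\beta\gamma^{2}$, and $\beta^{2}\cdots=\alpha\beta^{2}$ (the competitors $\beta^{3}$, $\beta^{2}\gamma^{c}$ are killed by $\beta+\gamma>\pi$ and the bounds). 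Because a rhombus has two $\gamma$‑corners, $\gamma$ occurs, so $\alpha\gamma\cdots$ is a vertex by Lemma \ref{Lem-albe-alga}; running through the remaining candidates ($\alpha\gamma^{3}$, $\alpha\gamma^{4}$, $\alpha^{2}\gamma^{2}$) and discarding those whose implied angle relations are inconsistent with \eqref{Eq-cot-al-be-ga}, or which — using $\beta^{2}\cdots=\alpha\beta^{2}$ and the $\beta\vert\beta\Leftrightarrow\gamma\vert\gamma$ dichotomy of Figure \ref{Fig-adj-am-a4} — produce an $\alpha^{2}\cdots$ vertex, leaves $\alpha\beta\gamma^{2}$. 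Hence $\alpha\beta\gamma^{2}$ is a vertex, so $\alpha+2\beta=2\pi=\alpha+\beta+2\gamma$ forces $\beta=2\gamma$; substituting $\beta=2\gamma$, $\alpha=2\pi-4\gamma$ into \eqref{Eq-cot-al-be-ga} leaves one transcendental equation for $\gamma$, whose root is the (irrational) snub‑dodecahedron angle triple, and these values admit only $\alpha\beta^{2}$, $\alpha\beta\gamma^{2}$ and $\alpha\gamma^{4}$ as vertices. (One also checks, as in Propositions \ref{Prop-a5-a4-al2ga}, \ref{Prop-a5-a4-al3} together with $2\alpha+\beta<2\pi$ and the pinned values, that $\alpha^{2}\cdots$ is never a vertex; in particular no two pentagons are ever adjacent.)

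The key step is to show $\alpha\gamma^{4}$ never occurs, so that $\AVC\equiv\{\alpha\beta^{2},\alpha\beta\gamma^{2}\}$. Suppose $v=\alpha\gamma^{4}$. Its five tiles are a pentagon $P$ and four rhombi, and each edge at $v$ lying between two consecutive rhombi is a $\gamma\vert\gamma$ edge, so by Figure \ref{Fig-adj-am-a4} its other endpoint is a $\beta\vert\beta$ vertex, which by $\beta^{2}\cdots=\alpha\beta^{2}$ is a degree‑$3$ vertex $\alpha\beta^{2}$ carrying a pentagon. Now take a rhombus $R$ lying strictly between two others at $v$ (there are two of these): both of its $\beta$‑corners are such degree‑$3$ vertices, so at each of them $R$ shares an edge with a pentagon, and these two edges of $R$ meet at the remaining, $\gamma$‑, corner of $R$. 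The two pentagons cannot coincide (else they would surround that corner, a degree‑$2$ vertex), so that corner is of type $\alpha^{2}\cdots$ — impossible. Hence $\AVC\equiv\{\alpha\beta^{2},\alpha\beta\gamma^{2}\}$.

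Finally I would classify. Since $\alpha^{2}\cdots$ is not a vertex, every pentagon is bordered by five distinct rhombi, each handing one $\beta$ and one $\gamma$ to the two ends of the edge it shares with the pentagon, and the only vertices are the degree‑$3$ $\alpha\beta^{2}$ and the degree‑$4$ $\alpha\beta\gamma^{2}$. Propagating from any $\alpha\beta^{2}$ vertex — its $\beta\vert\beta$ edge forces a $\gamma\vert\gamma$ edge, which in turn forces new $\beta\vert\beta$ edges — shows the neighbourhood of each $\alpha\beta^{2}$ vertex is rigid. A straightforward Euler‑plus‑corner‑count argument (each rhombus has two $\beta$‑ and two $\gamma$‑corners, and each vertex exactly one $\alpha$) then forces $12$ pentagons, $40$ rhombi, $f=52$, with $20$ vertices of type $\alpha\beta^{2}$ and $40$ of type $\alpha\beta\gamma^{2}$. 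Splitting every rhombus along its $\beta\beta$‑diagonal — which has length $x$ because $\beta=2\gamma$ makes each half an equiangular, hence equilateral, triangle — turns such a tiling into a dihedral tiling by regular pentagons and regular triangles with every vertex of type $3^{4}.5$, i.e. the snub dodecahedron; so our tilings are precisely its triangular fusions, the pairings of the $80$ triangles into edge‑adjacent pairs, equivalently the $1$‑factors of the dodecahedron graph. The main obstacle is exactly this endgame: proving the local rigidity really does close up to the snub‑dodecahedron combinatorics with nothing else possible, and that up to the symmetry of the sphere there are exactly three such pairings/$1$‑factors — these being the tilings of Figures \ref{Fig-a5-a4-Tiling-albe2-albega2-Pen5albegaga} and \ref{Fig-a5-a4-Tilings-albe2-albega2-Pen3albegaga}, distinguished as in the discussion around Figures \ref{Fig-a5-a4-snub-dodecahedron} and \ref{Fig-a5-a4-diff-snub-dodecahedron}.
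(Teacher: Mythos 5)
Your first half tracks the paper's argument: the bounds $\tfrac{3}{5}\pi<\alpha<\tfrac{2}{3}\pi<\beta<\tfrac{7}{10}\pi$, the reduction of $\alpha\gamma\cdots$ via \eqref{Eq-cot-al-be-ga}, and in particular your exclusion of $\alpha\gamma^4$ is a correct (and nicely spelled out) version of the paper's ``reversed deduction'': the middle rhombus at $\alpha\gamma^4$ has both $\beta$-corners at $\beta\vert\beta$ vertices, hence at $\alpha\beta^2$'s, and the two resulting pentagons collide at its opposite $\gamma$-corner to force the non-vertex $\alpha^2\gamma\cdots$. Two points, however, need attention.

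First, your elimination of $\alpha^2\gamma^2$ is under-argued. The pair $\{\alpha\beta^2,\alpha^2\gamma^2\}$ \emph{is} consistent with \eqref{Eq-cot-al-be-ga} (it pins $\alpha\approx0.63636\pi$, $\gamma\approx0.36364\pi$, with $\alpha+\gamma=\pi$), so it cannot be discarded numerically, and Propositions \ref{Prop-a5-a4-al2ga}, \ref{Prop-a5-a4-al3} only exclude $\alpha^2\gamma$ and $\alpha^3$, not $\alpha^2\gamma^2$. Saying it ``produces an $\alpha^2\cdots$ vertex'' is circular. What is actually needed is the local argument of Figure \ref{Fig-a5-a4-AAD-albe2-algac-gagaga/algaal-alal}: the arrangement $\alpha\vert\gamma\vert\alpha$ together with $\alpha\beta\cdots=\alpha\beta^2$ forces a $\gamma^3\cdots$ vertex, which is not in that $\AVC$; and $\alpha\vert\alpha$ forces an $\alpha\vert\gamma\vert\alpha$, so $\alpha^2\gamma^2$ dies.

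Second, and more seriously, the classification endgame is a genuine gap. You reduce the count to two unproved assertions: (i) that an edge-to-edge tiling by regular pentagons and equilateral triangles in which every vertex is $3^4.5$ must be the snub dodecahedron, and (ii) that the dodecahedron graph has exactly three $1$-factors up to the (chiral) symmetry group of the snub dodecahedron. Neither is supplied, and (ii) is precisely the combinatorial content of the proposition; the dodecahedron has $125$ perfect matchings and the orbit count is not something one can wave at. The paper avoids both issues by a direct propagation: it first shows each pentagon carries either five or exactly three \emph{consecutive} vertices of arrangement $\vert\alpha\vert\beta\vert\gamma\vert\gamma\vert$ (Figures \ref{Fig-a5-a4-AAD-albe2-albega2-Pen-albe2}, \ref{Fig-a5-a4-AAD-albe2-albega2-one-albegaga}), the first case closing up to the tiling of Figure \ref{Fig-a5-a4-Tiling-albe2-albega2-Pen5albegaga}, and in the second case it determines $Q_1,\dots,Q_{40}$ one tile at a time from $\alpha\beta\gamma\cdots=\alpha\beta\gamma^2$, with a single binary choice at $P_{12}$ yielding the two tilings of Figure \ref{Fig-a5-a4-Tilings-albe2-albega2-Pen3albegaga}. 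The $1$-factor correspondence appears in the paper only as an a posteriori remark, not as the engine of the proof. To complete your approach you would have to either prove (i) and (ii), or replace them by such a propagation argument.
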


Each of these tilings has $40$ rhombi and $12$ pentagons. They are the first three tilings of Figure \ref{Fig-a5-a4-Archimedean-Tilings}.

\begin{proof} By $\beta>\alpha>\gamma$ and $\alpha\beta^2$ and $\beta+\gamma>\pi$, we get $\pi > \beta > \frac{2}{3}\pi>\alpha > \frac{3}{5}\pi$ and $2\gamma>\alpha$. Then $ \beta  > \alpha > \frac{3}{5}\pi$ and $\alpha\beta^2$ imply $\frac{7}{10}\pi > \beta$. By $\alpha > \frac{3}{5}\pi$ and $2\gamma>\alpha$, we also get $\gamma>\frac{1}{2}\alpha>\frac{3}{10}\pi$. By $3\gamma>\frac{7}{10}\pi > \beta$ and $\alpha\beta^2$, we also have $3\alpha+\gamma, \alpha+\beta+3\gamma>2\pi$. This further implies that $\alpha^3\gamma\cdots$ is not a vertex and $\beta^2\cdots=\alpha\beta^2$ and $\alpha\beta\gamma\cdots=\alpha\beta\gamma^2$. We summarise the inequalities below,
\begin{align*}
\alpha > \tfrac{3}{5}\pi, \quad
\beta > \tfrac{2}{3}\pi, \quad
\gamma > \tfrac{3}{10}\pi.
\end{align*}

Lemma \ref{Lem-albe-alga} implies that $\alpha\gamma\cdots$ is a vertex. By the above inequalities and $\alpha\beta\gamma\cdots=\alpha\beta\gamma^2$ and no $\alpha^3\gamma\cdots$, we get $\alpha\gamma\cdots= \alpha^2\gamma^2, \alpha^2\gamma^3, \alpha\gamma^3, \alpha\gamma^4, \alpha\gamma^5, \alpha\beta\gamma^2$. Moreover, $\alpha\beta^2$ and one of $\alpha\gamma^3, \alpha\gamma^5, \alpha^2\gamma^3$ imply that there is no solution to \eqref{Eq-cot-al-be-ga} for $\alpha \in (\frac{3}{5}\pi, \frac{2}{3}\pi)$. So $\alpha\gamma^3, \alpha\gamma^5, \alpha^2\gamma^3$ is not a vertex. Hence we have
\begin{align*} 
\alpha\gamma\cdots= \alpha^2\gamma^2, \alpha\gamma^4, \alpha\beta\gamma^2.
\end{align*}
Any vertex above combined with $\alpha\beta^2$ uniquely determine angle values via \eqref{Eq-cot-al-be-ga} below
\begin{align*}
&\alpha^2\gamma^2:&
&\alpha \approx 0.63636\pi,&
&\beta \approx 0.68182\pi,&
&\gamma \approx 0.36364\pi.& \\
&\alpha\gamma^4 / \alpha\beta\gamma^2:&
&\alpha \approx 0.62526\pi,&
&\beta \approx 0.68737\pi,&
&\gamma \approx 0.34369\pi.&
\end{align*}
Note that either one of $\alpha\gamma^4, \alpha\beta\gamma^2$ combined with $\alpha\beta^2$ yield the same angle values. The corresponding angle values above further imply that the vertices are one of the following combinations,
\begin{align*}
\AVC &= \{ \alpha\beta^2, \alpha^2\gamma^2 \}; \\
\AVC &= \{ \alpha\beta^2, \alpha\gamma^4, \alpha\beta\gamma^2 \}. 
\end{align*}


We further divide the discussion into the cases below.

\begin{case*}[$\AVC = \{ \alpha\beta^2, \alpha^2\gamma^2 \}$] By $\AVC = \{ \alpha\beta^2, \alpha^2\gamma^2 \}$, we know that $\gamma^3\cdots$ are not vertices and $\alpha\beta\cdots=\alpha\beta^2$ and $\alpha^2\cdots=\alpha\gamma\cdots=\alpha^2\gamma^2$.

The arrangement of $\alpha\vert\alpha$ determines tiles $T_1, T_2$ in the first picture of Figure \ref{Fig-a5-a4-AAD-albe2-algac-gagaga/algaal-alal}. Up to mirror symmetry, $\alpha^2\cdots=\alpha^2\gamma^2$ determines $T_3, T_4$. Then $\alpha_2\beta_3\cdots, \alpha_2\beta_4\cdots=\alpha\beta^2$ determine $T_5, T_6$ respectively. This implies $\gamma_5 \vert \alpha_2 \vert \gamma_6$. It means that there is an $\alpha^2\gamma^2= \vert \alpha\vert \gamma \vert \alpha \vert \gamma \vert$.

\begin{figure}[h!] 
\centering
\begin{tikzpicture}

\tikzmath{
\s=1;
}

\begin{scope}[yshift=0.5*\s cm] 

\tikzmath{
\r=0.8;
\g=5;
\ph=360/\g;
\x=\r*cos(\ph/2);
\y=\r*sin(\ph/2);
\rr=2*\y/sqrt(2);
\h=4;
\th=360/\h;
\xx=\rr*cos(\th/2);
}

\foreach \aa in {-1,1} {

\tikzset{shift={(\aa*\x,0)}, xscale=-\aa}

\foreach \a in {0,...,4} {

\draw[rotate=\a*\ph]
	(0.5*\ph:\r) -- (-0.5*\ph:\r)
;

\node at (0.5*\ph+\a*\ph:0.7*\r) {\small $\alpha$};

}

}

\foreach \a in {-1,1} {

\tikzset{shift={(\x,0)}}

\draw[rotate=\a*\ph]
	(0:\r) -- (0:1.5*\r)
;

}

\foreach \a in {0,1} {

\draw[rotate=\a*2*\th]
	(0,\y) -- (0, 2*\y)
;

}

\draw[shift={(\x,0)}]
	(0:\r) -- (10:1.5*\r)
	(0:\r) -- (-10:1.5*\r)
;

\node at (0.2*\r, 1.5*\y) {\small $\gamma$};
\node at (1*\r, 2*\y) {\small $\beta$};

\node at (1.4*\r, 1.8*\y) {\small $\beta$};
\node at (2.3*\x, 0.4*\y) {\small $\gamma$};

\node at (1.35*\r, -1.75*\y) {\small $\beta$};
\node at (2.3*\x, -0.4*\y) {\small $\gamma$};

\node at (0.2*\r, -1.5*\y) {\small $\gamma$};
\node at (1*\r, -2.1*\y) {\small $\beta$};

\node[inner sep=1,draw,shape=circle] at (-\x,0) {\small $1$};
\node[inner sep=1,draw,shape=circle] at (\x,0) {\small $2$};
\node[inner sep=1,draw,shape=circle] at (0.75*\x,1.75*\x) {\small $3$};
\node[inner sep=1,draw,shape=circle] at (0.75*\x,-1.75*\x) {\small $4$};
\node[inner sep=1,draw,shape=circle] at (2.5*\x,1*\x) {\small $5$};
\node[inner sep=1,draw,shape=circle] at (2.5*\x,-1*\x) {\small $6$};

\end{scope} 

\begin{scope}[xshift=5*\s cm] 

\tikzmath{
\r=1.2;
\gon=4;
\th=360/\gon;
\x=\r*cos(\th/2);
}

\foreach \a in {0,...,3} {
\draw[rotate=\th*\a] 
	(0:0) -- (0.5*\th:\r)
; 
}

\foreach \a in {0,...,2} {
\draw[rotate=\th*\a] 
	(0.5*\th:\r) -- (0:2*\x)
	(-0.5*\th:\r) -- (0:2*\x)
;
}

\draw[]
	(90:2*\x) -- ([shift=(90:2*\x)]45: 0.5*\r)
	(90:2*\x) -- ([shift=(90:2*\x)]135: 0.5*\r)
;

\node at (180:0.35*\x) {\small $\gamma$}; 
\node at (180:1.7*\x) {\small $\gamma$};
\node at (1.625*\th:0.85*\r) {\small $\beta$};
\node at (2.375*\th:0.85*\r) {\small $\beta$};

\node at (90:0.35*\x) {\small $\gamma$}; 
\node at (90:1.7*\x) {\small $\gamma$}; 
\node at (0.625*\th:0.85*\r) {\small $\beta$}; 
\node at (1.375*\th:0.85*\r) {\small $\beta$};

\node at (0:0.35*\x) {\small $\gamma$}; 
\node at (0:1.7*\x) {\small $\gamma$};
\node at (0.375*\th:0.85*\r) {\small $\beta$};
\node at (-0.375*\th:0.85*\r) {\small $\beta$};

\node at (100:2*\x) {\small $\alpha$}; 
\node at (1.5*\th:1.15*\r) {\small $\alpha$};

\node at (80:2*\x) {\small $\alpha$}; 
\node at (0.5*\th:1.15*\r) {\small $\alpha$};

\node[inner sep=1,draw,shape=circle] at (180:1*\x) {\small $4$};
\node[inner sep=1,draw,shape=circle] at (90:1*\x) {\small $2$};
\node[inner sep=1,draw,shape=circle] at (0:1*\x) {\small $5$};
\node[inner sep=1,draw,shape=circle] at (1.5*\th:1.5*\r) {\small $1$};
\node[inner sep=1,draw,shape=circle] at (0.5*\th:1.5*\r) {\small $3$};

\end{scope}

\end{tikzpicture}
\caption{The arrangements of $\gamma\vert\gamma\vert\gamma$ and $\alpha\vert\gamma\vert\alpha$ and $\alpha\vert\alpha$}
\label{Fig-a5-a4-AAD-albe2-algac-gagaga/algaal-alal}
\end{figure}
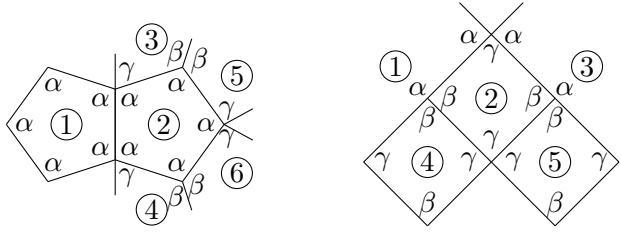

The angle arrangement $\alpha\vert\gamma\vert\alpha$ determines tiles $T_1, T_2, T_3$ in the second picture of Figure \ref{Fig-a5-a4-AAD-albe2-algac-gagaga/algaal-alal}. Then $\alpha_1\beta_2\cdots, \alpha_3\beta_2\cdots=\alpha\beta^2$ determine $T_4, T_5$. This implies $\gamma_2\gamma_4\gamma_5\cdots$, which is not a vertex, a contradiction. So $\alpha\vert\gamma\vert\alpha\cdots$ is not a vertex. So $\alpha\vert\alpha\cdots$ is also not a vertex. Hence $\alpha^2\gamma^2$ is not a vertex.
\end{case*}

\begin{case*}[$\AVC= \{ \alpha\beta^2, \alpha\gamma^4, \alpha\beta\gamma^2 \}$] We have $\alpha\gamma\cdots=\alpha\gamma^4$ and $\alpha^2\gamma\cdots$ is not a vertex and $\beta^2\cdots=\alpha\beta^2$. 

Reverse the deduction in the second picture of Figure \ref{Fig-a5-a4-AAD-albe2-algac-gagaga/algaal-alal}, the angle arrangement of $\gamma\vert\gamma\vert\gamma $ leads to $\alpha_1\alpha_3\gamma_2\cdots$, a contradiction. Hence $\alpha\gamma^4$ is not a vertex. So $\alpha\gamma\cdots=\alpha\beta\gamma^2$. 

The vertices $\alpha\beta^2, \alpha\beta\gamma^2$ imply 
\begin{align*}
\beta=\pi - \tfrac{1}{2}\alpha, \quad
\gamma = \tfrac{1}{2}\pi - \tfrac{1}{4}\alpha.
\end{align*}
Substituting the above into \eqref{Eq-cot-al-be-ga}, we get
\begin{align*}
\alpha \approx 0.62526\pi, \quad
\beta \approx 0.68737\pi, \quad
\gamma \approx 0.34369\pi, \quad
x \approx 0.14901\pi.
\end{align*}
With the above angle values and $\alpha\gamma\cdots=\alpha\beta\gamma^2$, one can determine all vertices satisfying \eqref{Eq-cot-al-be-ga}. Hence we get
\begin{align*}
\AVC = \{ \alpha\beta^2, \alpha\beta\gamma^2 \}.
\end{align*}


Suppose there is one pentagon with five $\vert\alpha\vert\beta\vert\gamma\vert\gamma\vert$'s, then we determine the tiling in Figure \ref{Fig-a5-a4-Tiling-albe2-albega2-Pen5albegaga}. 

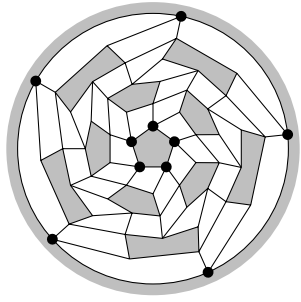
\begin{figure}[h!] 
\centering
\begin{tikzpicture}

\tikzmath{
\r=0.6;
\th=360/5;
}

\begin{scope}[]

\fill[gray!50]
	(0,0) circle (3.25*\r)
;

\fill[white]
	(0,0) circle (3*\r)
;

\fill[gray!50]
	(90:0.5*\r) -- (90+\th:0.5*\r) -- (90+2*\th:0.5*\r) -- (90+3*\th:0.5*\r) -- (90+4*\th:0.5*\r)
;

\foreach \a in {0,...,4} {

\fill[gray!50, rotate=\a*\th]
	(90-\th:1*\r) -- (\th/6:1.5*\r) -- (0.5*\th:1.5*\r) -- (5*\th/6:1.5*\r)  -- (90-0.5*\th:1*\r) -- cycle
;

\fill[gray!50, rotate=\a*\th]
	(90-\th/12:2*\r) --  (5*\th/6:2*\r)  --  (0.5*\th:2*\r)  -- (42:2.5*\r) --  (90-\th/6:2.5*\r)  -- cycle
;

}

\foreach \a in {0,...,4}
{

\fill[rotate=\a*\th]
	(90:0.5*\r) circle (0.12*\r)
	(\th/12:3*\r) circle (0.12*\r)
;


\draw[rotate=\a*\th]
	(90:0.5*\r) -- (90-\th:0.5*\r)  
	(90-0.5*\th:1*\r) -- (90-\th:1*\r) 
	(\th/6:1.5*\r) -- (0.5*\th:1.5*\r)  
	(0.5*\th:1.5*\r) -- (5*\th/6:1.5*\r) 
	(90-0.5*\th:1*\r) -- (5*\th/6:1.5*\r) 
	(90:1*\r) -- (90-\th/12:1.5*\r) 
	%
	(90:0.5*\r) -- (90-0.5*\th:1*\r) 
	(90-\th:0.5*\r) -- (90-\th:1*\r) 
	(90:1*\r) -- (5*\th/6:1.5*\r) 
	%
	(-\th/6:2*\r) -- (\th/6:2*\r) 
	(0.5*\th:2*\r) -- (5*\th/6:2*\r) 
	(90-\th/6:2.5*\r) -- (42:2.5*\r) 
	(0.5*\th:2*\r) -- (42:2.5*\r) 
	(\th/6:2*\r) -- (\th/12:2.5*\r) 
	(90-\th/12:1.5*\r) --  (5*\th/6:2*\r) 
	(0.5*\th:1.5*\r) -- (0.5*\th:2*\r) 
	%
	(0.5*\th:1.5*\r) -- (\th/6:2*\r) 
	%
	(-\th/6:1.5*\r) -- (-\th/6:2*\r) 
	%
	%
	(0.5*\th:2*\r) -- (\th/12:2.5*\r) 
	(90-\th/6:2.5*\r) -- (90-\th/6:3*\r) 
	(42:2.5*\r) -- (\th/12:3*\r) 
;


	(54:0.6*\r) circle (0.05*\r)
	(30:0.8*\r) circle (0.05*\r)
	(6:0.8*\r) circle (0.05*\r)
	(-6:1.1*\r) circle (0.05*\r)
	(6:1.3*\r) circle (0.05*\r)
	(-6:1.6*\r) circle (0.05*\r)
	(-18:1.6*\r) circle (0.05*\r)
	(-30:1.8*\r) circle (0.05*\r)
	(6:1.8*\r) circle (0.05*\r)
	(18:1.6*\r) circle (0.05*\r)
	(30:1.8*\r) circle (0.05*\r)
	(15:2.1*\r) circle (0.05*\r)
	(30:2.2*\r) circle (0.05*\r)
	(12:2.7*\r) circle (0.05*\r)
	(0:2.7*\r) circle (0.05*\r)
	(-30:2.8*\r) circle (0.05*\r);
	
	(-18:0.6*\r) -- (6:0.8*\r) -- (30:0.8*\r) -- (54:0.6*\r)
	(6:0.8*\r) -- (-6:1.1*\r) -- (6:1.3*\r) -- (-6:1.6*\r) -- (6:1.8*\r) -- (18:1.6*\r) -- (30:1.8*\r) -- (42:1.8*\r) -- (54:1.6*\r) -- (66:1.6*\r)
	(30:1.8*\r)  -- (15:2.1*\r) -- (30:2.2*\r) -- (12:2.7*\r) -- (0:2.7*\r) -- (-30:2.8*\r) -- (-60:2.7*\r);


}

\draw
	(0,0) circle (3*\r)
;

\end{scope}

\end{tikzpicture}
\caption{The tilings with $\alpha\beta^2, \alpha\beta\gamma^2 $, where $\vert\alpha\vert\beta\vert\gamma\vert\gamma\vert$ denoted by a \quotes{\textbullet}}
\label{Fig-a5-a4-Tiling-albe2-albega2-Pen5albegaga}
\end{figure}

Next, we assume that there is no pentagon with five $\vert \alpha\vert \beta \vert \gamma \vert \gamma \vert$'s. The same deduction in Figure \ref{Fig-a5-a4-AAD-albegac-alga} and no $\alpha^2\cdots$ show that any pentagon without five $\vert \alpha\vert \beta \vert \gamma \vert \gamma \vert$'s has at least one $\alpha\beta^2$. The unique angle arrangement of $\alpha\beta^2$ determines tiles $T_1, T_2, T_3$ in Figure \ref{Fig-a5-a4-AAD-albe2-albega2-Pen-albe2}. By mirror symmetry and no $\alpha^2\cdots$, we know that $T_4$ is a rhombus and we may assume $T_4$ with $\beta_4, \gamma_4$ as configured. 

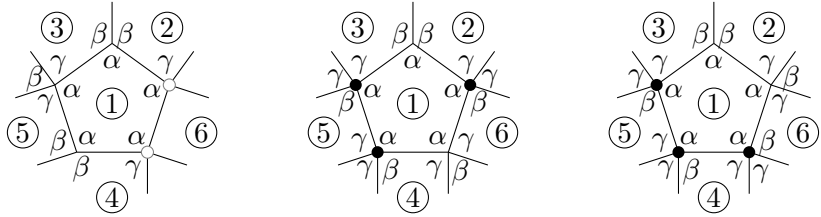
\begin{figure}[h!] 
\centering
\begin{tikzpicture}

\tikzmath{
\s=1;
\r=0.8;
\g=5;
\ph=360/\g;
\Ph=(\g-2)*180/\g;
\x=\r*cos(\ph/2);
\y=\r*sin(\ph/2);
\rr=2*\y/sqrt(2);
\h=4;
\th=360/\h;
\xx=\rr*cos(\th/2);
\R=sqrt(\r^2+(\r)^2 - 2*\r*\r*cos(180-\Ph/2));
\lxx = 2*\xx*cos(\Ph/2);
}

\begin{scope}[] 

\foreach \p in {0,...,4} {

\draw[rotate=\p*\ph]
	(90-1*\ph:\r) -- (90:\r)
;


%
\node at (90+\p*\ph:0.7*\r) {\small $\alpha$};

}

\foreach \p in {1,2,4} {

\draw[rotate=\p*\ph]
	(90+2*\ph:\r) -- ([shift={(90+2*\ph:\r)}]270:\lxx)
;

}

\foreach \p in {1,3,4} {

\draw[rotate=\p*\ph]
	(90+3*\ph:\r) -- ([shift={(90+3*\ph:\r)}]270:\lxx)
;

}

\draw[]
	(90:\r) -- (90:\r+\lxx)
;

\draw[]
	(90+3*\ph:\r) -- ([shift={(90+3*\ph:\r)}]270:\lxx)
;

\node at (90-0.15*\ph:1.15*\r) {\small $\beta$};
\node at (90-0.75*\ph:1.1*\r) {\small $\gamma$};

\node at (90+0.15*\ph:1.15*\r) {\small $\beta$};
\node at (90+0.75*\ph:1.1*\r) {\small $\gamma$};



\node at (90+1.25*\ph:1.1*\r) {\small $\gamma$};

\node at (90+\ph:1.35*\r) {\small $\beta$};



\node at (90+1.75*\ph:1.1*\r) {\small $\beta$};

\node at (90+2.15*\ph:1.2*\r) {\small $\beta$};
\node at (90+2.75*\ph:1.15*\r) {\small $\gamma$};

\node [shape = circle, draw=gray, fill = white, minimum size = 0.16 cm, inner sep=0pt] at (90-\ph:\r) {};
\node [shape = circle, draw=gray, fill = white, minimum size = 0.16 cm, inner sep=0pt] at (90-2*\ph:\r) {};

\node[inner sep=1,draw,shape=circle] at (0,0) {\small $1$};
\node[inner sep=1,draw,shape=circle] at (90-0.5*\ph:\x+0.75*\r) {\small $2$};
\node[inner sep=1,draw,shape=circle] at (90+0.5*\ph:\x+0.75*\r) {\small $3$};
\node[inner sep=1,draw,shape=circle] at (90+2.5*\ph:\x+0.75*\r) {\small $4$};
\node[inner sep=1,draw,shape=circle] at (90+1.5*\ph:\x+0.75*\r) {\small $5$};
\node[inner sep=1,draw,shape=circle] at (90-1.5*\ph:\x+0.75*\r) {\small $6$};

\end{scope}

\begin{scope}[xshift=4*\s cm] 

\foreach \p in {0,...,4} {

\draw[rotate=\p*\ph]
	(90-1*\ph:\r) -- (90:\r)
;


%
\node at (90+\p*\ph:0.7*\r) {\small $\alpha$};

}

\foreach \p in {1,2,4} {

\draw[rotate=\p*\ph]
	(90+2*\ph:\r) -- ([shift={(90+2*\ph:\r)}]270:\lxx)
;

}

\foreach \p in {1,3,4} {

\draw[rotate=\p*\ph]
	(90+3*\ph:\r) -- ([shift={(90+3*\ph:\r)}]270:\lxx)
;

}

\draw[]
	(90:\r) -- (90:\r+\lxx)
;

\draw[]
	(90+2*\ph:\r) -- ([shift={(90+2*\ph:\r)}]270:\lxx)
	(90+3*\ph:\r) -- ([shift={(90+3*\ph:\r)}]270:\lxx)
;

\node at (90-0.15*\ph:1.15*\r) {\small $\beta$};
\node at (90-0.75*\ph:1.1*\r) {\small $\gamma$};

\node at (90+0.15*\ph:1.15*\r) {\small $\beta$};
\node at (90+0.75*\ph:1.1*\r) {\small $\gamma$};

\node at (90-\ph:1.35*\r) {\small $\gamma$};

\node at (90-1.25*\ph:1.1*\r) {\small $\beta$};
\node at (90-1.75*\ph:1.1*\r) {\small $\gamma$};

\node at (90+1.25*\ph:1.1*\r) {\small $\beta$};

\node at (90+\ph:1.35*\r) {\small $\gamma$};

\node at (90+2*\ph:1.35*\r) {\small $\gamma$};

\node at (90-2.025*\ph:1.4*\r) {\small $\beta$};

\node at (90+1.75*\ph:1.1*\r) {\small $\gamma$};

\node at (90+2.25*\ph:1.2*\r) {\small $\beta$};
\node at (90+2.75*\ph:1.15*\r) {\small $\gamma$};

\node [shape = circle, fill = black, minimum size = 0.16 cm, inner sep=0pt] at (90-\ph:\r) {};
\node [shape = circle, fill = black, minimum size = 0.16 cm, inner sep=0pt] at (90+\ph:\r) {};
\node [shape = circle, fill = black, minimum size = 0.16 cm, inner sep=0pt] at (90+2*\ph:\r) {};

\node[inner sep=1,draw,shape=circle] at (0,0) {\small $1$};
\node[inner sep=1,draw,shape=circle] at (90-0.5*\ph:\x+0.75*\r) {\small $2$};
\node[inner sep=1,draw,shape=circle] at (90+0.5*\ph:\x+0.75*\r) {\small $3$};
\node[inner sep=1,draw,shape=circle] at (90+2.5*\ph:\x+0.75*\r) {\small $4$};
\node[inner sep=1,draw,shape=circle] at (90+1.5*\ph:\x+0.75*\r) {\small $5$};
\node[inner sep=1,draw,shape=circle] at (90-1.5*\ph:\x+0.75*\r) {\small $6$};

\end{scope}

\begin{scope}[xshift=8*\s cm] 

\foreach \p in {0,...,4} {

\draw[rotate=\p*\ph]
	(90-1*\ph:\r) -- (90:\r)
;


%
\node at (90+\p*\ph:0.7*\r) {\small $\alpha$};

}

\foreach \p in {1,2,4} {

\draw[rotate=\p*\ph]
	(90+2*\ph:\r) -- ([shift={(90+2*\ph:\r)}]270:\lxx)
;

}

\foreach \p in {1,3,4} {

\draw[rotate=\p*\ph]
	(90+3*\ph:\r) -- ([shift={(90+3*\ph:\r)}]270:\lxx)
;

}

\draw[]
	(90:\r) -- (90:\r+\lxx)
;

\draw[]
	(90+2*\ph:\r) -- ([shift={(90+2*\ph:\r)}]270:\lxx)
	(90+3*\ph:\r) -- ([shift={(90+3*\ph:\r)}]270:\lxx)
;

\node at (90-0.15*\ph:1.15*\r) {\small $\beta$};
\node at (90-0.75*\ph:1.1*\r) {\small $\gamma$};

\node at (90+0.15*\ph:1.15*\r) {\small $\beta$};
\node at (90+0.75*\ph:1.1*\r) {\small $\gamma$};

\node at (90-\ph:1.35*\r) {\small $\beta$};

\node at (90-1.25*\ph:1.1*\r) {\small $\gamma$};
\node at (90-1.75*\ph:1.1*\r) {\small $\beta$};

\node at (90+1.25*\ph:1.1*\r) {\small $\beta$};

\node at (90+\ph:1.35*\r) {\small $\gamma$};

\node at (90+2*\ph:1.35*\r) {\small $\gamma$};

\node at (90-2.025*\ph:1.4*\r) {\small $\gamma$};

\node at (90+1.75*\ph:1.1*\r) {\small $\gamma$};

\node at (90+2.25*\ph:1.2*\r) {\small $\beta$};
\node at (90+2.75*\ph:1.15*\r) {\small $\gamma$};

\node [shape = circle, fill = black, minimum size = 0.16 cm, inner sep=0pt] at (90-2*\ph:\r) {};
\node [shape = circle, fill = black, minimum size = 0.16 cm, inner sep=0pt] at (90+\ph:\r) {};
\node [shape = circle, fill = black, minimum size = 0.16 cm, inner sep=0pt] at (90+2*\ph:\r) {};

\node[inner sep=1,draw,shape=circle] at (0,0) {\small $1$};
\node[inner sep=1,draw,shape=circle] at (90-0.5*\ph:\x+0.75*\r) {\small $2$};
\node[inner sep=1,draw,shape=circle] at (90+0.5*\ph:\x+0.75*\r) {\small $3$};
\node[inner sep=1,draw,shape=circle] at (90+2.5*\ph:\x+0.75*\r) {\small $4$};
\node[inner sep=1,draw,shape=circle] at (90+1.5*\ph:\x+0.75*\r) {\small $5$};
\node[inner sep=1,draw,shape=circle] at (90-1.5*\ph:\x+0.75*\r) {\small $6$};

\end{scope}

\end{tikzpicture}
\caption{The five vertices at a pentagon with $\alpha\beta^2$}
\label{Fig-a5-a4-AAD-albe2-albega2-Pen-albe2}
\end{figure}

If there is another $\alpha\beta^2$, then $\alpha_1\beta_4\cdots=\alpha\beta^2$ determines $T_5$ in the first picture of Figure \ref{Fig-a5-a4-AAD-albe2-albega2-Pen-albe2}. The same argument also shows that $T_6$ is a rhombus. Then one of $\alpha_1\gamma_2\cdots, \alpha_1\gamma_4\cdots$ is $\vert \alpha \vert \beta \vert \gamma \vert \gamma\vert$.

If there is no more $\alpha\beta^2$, then $\alpha_1\beta_4\cdots=\alpha\beta\gamma^2$ determines $T_5$ in the second and third picture of Figure \ref{Fig-a5-a4-AAD-albe2-albega2-Pen-albe2}. As $T_6$ is a rhombus, the two angle configurations determine the tile in the second and the third picture respectively and we have exactly three $\vert \alpha \vert \beta \vert \gamma \vert \gamma\vert$'s in both pictures.

The second picture of Figure \ref{Fig-a5-a4-AAD-albe2-albega2-Pen-albe2} can be translated into the first picture of Figure \ref{Fig-a5-a4-AAD-albe2-albega2-one-albegaga}. We also know $T_7, T_8$ from $\alpha_1\beta_6\gamma_2\gamma_7$ and $\alpha_1\beta_5\gamma_3\gamma_8$ respectively. Then $\beta_2\beta_7\cdots$, $\beta_3\beta_8\cdots=\alpha\beta^2$ determine $T_9, T_{10}$, which implies $\gamma_2\gamma_3\cdots=\alpha\gamma^2$ or $\alpha^2\gamma^2\cdots$. None of them is a vertex. Hence the three $\vert\alpha\vert\beta\vert\gamma\vert\gamma\vert$'s at a pentagon must be consecutive.

If there is exactly one $\vert\alpha\vert\beta\vert\gamma\vert\gamma\vert$ at a pentagon, then by the symmetry of the first picture of Figure \ref{Fig-a5-a4-AAD-albe2-albega2-Pen-albe2}, we may assume $\alpha_1\gamma_2\cdots=\vert \alpha \vert \gamma \vert \beta \vert \gamma \vert$. Then focusing on $T_1, T_2, T_3$, we translating the picture into Figure \ref{Fig-a5-a4-AAD-albe2-albega2-one-albegaga}. Then $\alpha_1\beta_7\gamma_2\gamma_6, \alpha_1\beta_8\gamma_3\gamma_5$ determine $T_7, T_8$. The mirror symmetry of $T_1, T_2, T_3$ and $\gamma_2\gamma_3\cdots=\alpha\beta\gamma^2$ determine $T_9, T_{10}$. This implies that $T_{10}$ has at least two $\vert\alpha\vert\beta\vert\gamma\vert\gamma\vert$'s.

\begin{figure}[h!] 
\centering
\begin{tikzpicture}

\tikzmath{
\s=1;
\r=1.2;
\th=360/3;
\x=\r*cos(0.5*\th);
}

\begin{scope}

\foreach \a in {0,1,2} {

\draw[rotate=\th*\a]
	(0:0) -- (90:\r) 
;

}

\foreach \a in {0,1} {

\draw[rotate=\th*\a]
	(90:\r) -- (90-0.5*\th:2*\x)
	(90+2*\th:\r) -- (90+2.5*\th:2*\x)
;

}

\foreach \aa in {-1, 1} {

\tikzset{xscale=\aa}

\foreach \a in {0,1} {

\draw[rotate=\th*\a]
	(90-0.5*\th:2*\x) -- (90-0.5*\th:3*\x)
;

}

\node at (80-\th:1.15*\r) {\small $\beta$};

}

\draw[]
	%
	(90-1*\th:2*\x) -- (90-1*\th:3*\x)
	(90+1*\th:2*\x) -- (90+1*\th:3*\x)
	%
	%
	(90-\th:2*\x) -- (270+0.175*\th: 2.5*\x)
	(90+\th:2*\x) -- (270-0.175*\th: 2.5*\x)
	(270-0.175*\th: 2.5*\x) -- (270+0.175*\th: 2.5*\x)
;

\draw[dashed]
	(90:\r) -- (90:\r+\x)
;

\node at ([shift={(0,-\x)}]90:0.65*\x) {\small $\alpha$}; 
\node at ([shift={(0,-\x)}]-5:1.2*\x) {\small $\alpha$};
\node at ([shift={(0,-\x)}]185:1.2*\x) {\small $\alpha$};
\node at ([shift={(0,-\x)}]270+0.25*\th:1.2*\x) {\small $\alpha$};
\node at ([shift={(0,-\x)}]270-0.25*\th:1.2*\x) {\small $\alpha$};

\node at ([shift={(30:\x)}]30:0.625*\x) {\small $\beta$}; 
\node at ([shift={(30:\x)}]210:0.625*\x) {\small $\beta$};
\node at ([shift={(30:\x)}]120:1.2*\x) {\small $\gamma$};
\node at ([shift={(30:\x)}]-60:1.2*\x) {\small $\gamma$};

\node at ([shift={(150:\x)}]150:0.625*\x) {\small $\beta$}; 
\node at ([shift={(150:\x)}]330:0.625*\x) {\small $\beta$};
\node at ([shift={(150:\x)}]240:1.2*\x) {\small $\gamma$};
\node at ([shift={(150:\x)}]60:1.2*\x) {\small $\gamma$};

\node at (97.5:1.1*\r) {\small $\alpha$}; 
\node at (142.5:1.1*\r) {\small $\alpha$}; 

\node at (82.5:1.1*\r) {\small $\alpha$};  
\node at (37.5:1.1*\r) {\small $\alpha$};


\node at (-22.5:1.1*\r) {\small $\gamma$}; 
\node at (20:1.1*\r) {\small $\beta$};  

\node at (180+22.5:1.1*\r) {\small $\gamma$}; 
\node at (180-20:1.1*\r) {\small $\beta$};  

\node[inner sep=1,draw,shape=circle] at (270:0.6*\r) {\small $1$};
\node[inner sep=1,draw,shape=circle] at (30:0.5*\r) {\small $2$};
\node[inner sep=1,draw,shape=circle] at (150:0.5*\r) {\small $3$};

\node[inner sep=0.25,draw,shape=circle] at (120:1.5*\r) {\footnotesize $10$};
\node[inner sep=1,draw,shape=circle] at (60:1.5*\r) {\small $9$};
\node[inner sep=1,draw,shape=circle] at (-10:1.5*\r) {\small $7$};
\node[inner sep=1,draw,shape=circle] at (190:1.5*\r) {\small $8$};

\end{scope}

\begin{scope}[xshift=5*\s cm]

\foreach \a in {0,1,2} {

\draw[rotate=\th*\a]
	(0:0) -- (90:\r) 
;

}

\foreach \a in {0,1} {

\draw[rotate=\th*\a]
	(90:\r) -- (90-0.5*\th:2*\x)
	(90+2*\th:\r) -- (90+2.5*\th:2*\x)
;

}

\foreach \aa in {-1, 1} {

\tikzset{xscale=\aa}

\draw[]	
	(90-0.5*\th:2*\x) -- ([shift={(90-0.5*\th:2*\x)}]-30:\x)
;

\foreach \a in {0,1} {

\draw[rotate=\th*\a]
	(90-0.5*\th:2*\x) -- (90-0.5*\th:3*\x)
;

}

\node at (-22.5:1.1*\r) {\small $\beta$}; 
\node at (15:1*\r) {\small $\gamma$}; 
\node at (82.5-\th:1.15*\r) {\small $\gamma$};  

}

\draw[]
	(90:\r) -- (90:\r+\x)
	(90-1*\th:2*\x) -- (90-1*\th:3*\x)
	(90+1*\th:2*\x) -- (90+1*\th:3*\x)
	%
	%
	(90-\th:2*\x) -- (270+0.175*\th: 2.5*\x)
	(90+\th:2*\x) -- (270-0.175*\th: 2.5*\x)
	(270-0.175*\th: 2.5*\x) -- (270+0.175*\th: 2.5*\x)
;

\node at ([shift={(0,-\x)}]90:0.65*\x) {\small $\alpha$}; 
\node at ([shift={(0,-\x)}]-5:1.2*\x) {\small $\alpha$};
\node at ([shift={(0,-\x)}]185:1.2*\x) {\small $\alpha$};
\node at ([shift={(0,-\x)}]270+0.25*\th:1.2*\x) {\small $\alpha$};
\node at ([shift={(0,-\x)}]270-0.25*\th:1.2*\x) {\small $\alpha$};

\node at ([shift={(30:\x)}]30:0.625*\x) {\small $\beta$}; 
\node at ([shift={(30:\x)}]210:0.625*\x) {\small $\beta$};
\node at ([shift={(30:\x)}]120:1.2*\x) {\small $\gamma$};
\node at ([shift={(30:\x)}]-60:1.2*\x) {\small $\gamma$};

\node at ([shift={(150:\x)}]150:0.625*\x) {\small $\beta$}; 
\node at ([shift={(150:\x)}]330:0.625*\x) {\small $\beta$};
\node at ([shift={(150:\x)}]240:1.2*\x) {\small $\gamma$};
\node at ([shift={(150:\x)}]60:1.2*\x) {\small $\gamma$};

\node at (97.5:1.1*\r) {\small $\alpha$}; 
\node at (142.5:1.1*\r) {\small $\alpha$}; 

\node at (82.5:1.1*\r) {\small $\beta$};  
\node at (37.5:1.1*\r) {\small $\gamma$};


\node at (180-22.5:1.3*\r) {\small $\gamma$};  


\node[inner sep=1,draw,shape=circle] at (270:0.6*\r) {\small $1$};
\node[inner sep=1,draw,shape=circle] at (30:0.5*\r) {\small $2$};
\node[inner sep=1,draw,shape=circle] at (150:0.5*\r) {\small $3$};

\node[inner sep=0.25,draw,shape=circle] at (120:1.5*\r) {\footnotesize $10$};
\node[inner sep=1,draw,shape=circle] at (60:1.5*\r) {\small $9$};
\node[inner sep=1,draw,shape=circle] at (-10:1.5*\r) {\small $7$};
\node[inner sep=1,draw,shape=circle] at (190:1.5*\r) {\small $8$};

\end{scope}

\end{tikzpicture}
\caption{The arrangement of $\alpha\beta^2$}
\label{Fig-a5-a4-AAD-albe2-albega2-one-albegaga}
\end{figure}
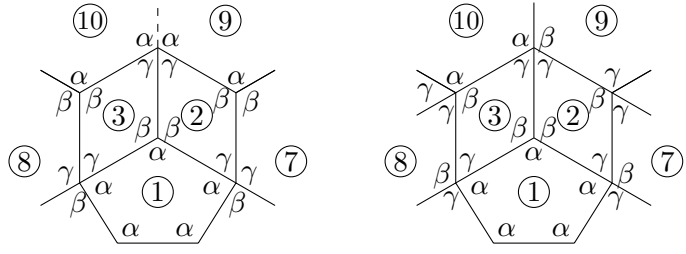


The above discussion entails that there is a pentagon with exactly three consecutive $\vert\alpha\vert\beta\vert\gamma\vert\gamma\vert$'s.


The vertices $\alpha\beta^2, \alpha\beta\gamma^2$ imply $\beta=2\gamma$. Then every rhombus in the tilings can be subdivided into two equilateral triangles with three $\gamma$'s. This means that the underlying dihedral tiling is the snub dodecahedron. To obtain the remaining tilings, we merge the pairs of equilateral triangles in the snub dodecahedron by deleting their common edge in ways that there is a pentagon with exactly three $\vert\alpha\vert\beta\vert\gamma\vert\gamma\vert$'s.

In the snub dodecahedron in Figure \ref{Fig-a5-a4-Tilings-albe2-albega2-Pen3albegaga}, we denote the pentagons as $P_1, P_2, ..., P_{12}$. The edges $e_i$'s to be deleted are labeled by $i=1,2,..., 40$ in both pictures, so are the rhombus $Q_1, Q_2, ..., Q_{40}$. Up to symmetry, we may assume that the central pentagon $P_1$ in Figure \ref{Fig-a5-a4-Tilings-albe2-albega2-Pen3albegaga} has exactly three $\vert\alpha\vert\beta\vert\gamma\vert\gamma\vert$'s and we determine $Q_1, Q_2, ..., Q_6$. By $\alpha_4\beta_3\gamma_4\cdots=\alpha\beta\gamma^2$, we further determine $Q_7$. We repeat the same argument and determine $Q_8, ..., Q_{27}$. Then $P_{12}$ has two adjacent $\vert \alpha \vert \beta \vert \gamma \vert \gamma \vert$'s. This means that there is another $\vert \alpha \vert \beta \vert \gamma \vert \gamma \vert$ on either side of them. Suppose we have the third $\vert \alpha \vert \beta \vert \gamma \vert \gamma \vert$ in the first picture. Then we further determine $Q_{28}$. Repeat the same argument of $\alpha\beta\gamma\cdots=\alpha\beta\gamma^2$, we further determine $Q_{29}, ..., Q_{40}$ and obtain the tiling in the first picture. Suppose we have the third $\vert \alpha \vert \beta \vert \gamma \vert \gamma \vert$ in the second picture. Then we further determine $Q_{28}, ..., Q_{40}$ and obtain the tiling in the second picture.

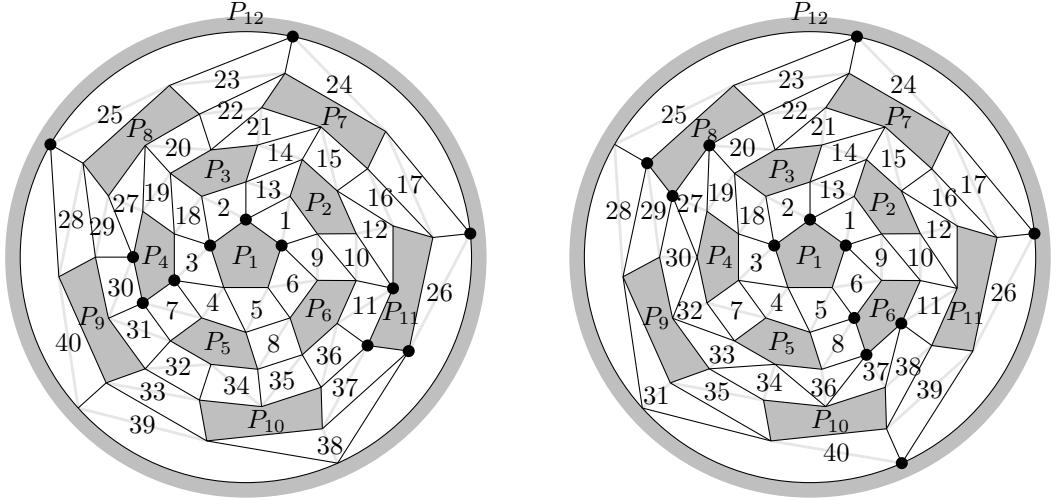
\begin{figure}[h!]
\centering
\begin{tikzpicture}[>=latex,scale=1]

\tikzmath{
\s=1;
\r=1;
\rr=0.08*\r;
\th=360/5;
}

\begin{scope}[scale=1] 

\fill[gray!50]
	(0,0) circle (3.2*\r)
;

\fill[white]
	(0,0) circle (3*\r)
;

\fill[gray!50]
	(90:0.5*\r) -- (90+\th:0.5*\r) -- (90+2*\th:0.5*\r) -- (90+3*\th:0.5*\r) -- (90+4*\th:0.5*\r)
;

\foreach \a in {0,...,4} {

\fill[gray!50, rotate=\a*\th]
	(90-\th:1*\r) -- (\th/6:1.5*\r) -- (0.5*\th:1.5*\r) -- (5*\th/6:1.5*\r)  -- (90-0.5*\th:1*\r) -- cycle
;

\fill[gray!50, rotate=\a*\th]
	(90-\th/12:2*\r) --  (5*\th/6:2*\r)  --  (0.5*\th:2*\r)  -- (42:2.5*\r) --  (90-\th/6:2.5*\r)  -- cycle
;

}

\draw[gray!20, line width=1.05]
	(90-\th:0.5*\r) -- (90-0.5*\th:1*\r) 
	(90:0.5*\r) -- (90+0.5*\th:1*\r) 
	(90+\th:0.5*\r) -- (90+1.5*\th:1*\r) 
	(90+2*\th:0.5*\r) -- (90+2*\th:1*\r) 
	(90-2*\th:0.5*\r) -- (90-2.5*\th:1*\r) 
	(90-2*\th:0.5*\r) -- (90-1.5*\th:1*\r) 
	(90:1*\r) -- (90-0.5*\th:1*\r) 
	(90-\th:1*\r) -- (90-1.5*\th:1*\r) 
	(90+\th:1*\r) -- (90+0.5*\th:1*\r) 
	(90-2*\th:1*\r) -- (5*\th/6-2*\th:1.5*\r) 
	(90+2*\th:1*\r) -- (5*\th/6+2*\th:1.5*\r) 
	(\th/6:1.5*\r) -- (-\th/6:1.5*\r) 
	(\th/6+\th:1.5*\r) -- (-\th/6+\th:1.5*\r) 
	(\th/6+2*\th:1.5*\r) -- (-\th/6+2*\th:1.5*\r) 
	(5*\th/6-1*\th:2*\r) -- (0.5*\th-1*\th:1.5*\r) 
	(\th/6:1.5*\r) -- (\th/6:2*\r) 
	(5*\th/6:2*\r) -- (0.5*\th:1.5*\r) 
	(\th/6:2*\r) -- (0.5*\th:2*\r) 
	(5*\th/6+\th:2*\r) -- (0.5*\th+\th:1.5*\r) 
	(\th/6+\th:1.5*\r) -- (\th/6+\th:2*\r) 
	(\th/6+\th:2*\r) -- (0.5*\th+\th:2*\r) 
	(\th/6+2*\th:1.5*\r) -- (\th/6+2*\th:2*\r) 
	(\th/6+2*\th:2*\r) -- (0.5*\th+2*\th:2*\r) 
	(5*\th/6+2*\th:2*\r) -- (0.5*\th+2*\th:1.5*\r) 
	(90-\th/12+2*\th:1.5*\r) --  (5*\th/6+2*\th:2*\r) 
	(0.5*\th+3*\th:1.5*\r) -- (\th/6+3*\th:2*\r) 
	(5*\th/6+3*\th:2*\r) -- (0.5*\th+3*\th:1.5*\r) 
	(90-\th/12+3*\th:1.5*\r) --  (5*\th/6+3*\th:2*\r) 
	(0.5*\th-1*\th:1.5*\r) -- (\th/6-1*\th:2*\r) 
	(42:2.5*\r) -- (\th/12:2.5*\r) 
	(42+\th:2.5*\r) -- (\th/12+\th:2.5*\r) 
	(42+2*\th:2.5*\r) -- (\th/12+2*\th:2.5*\r) 
	(0.5*\th+3*\th:2*\r) -- (\th/12+3*\th:2.5*\r) 
	(0.5*\th-1*\th:2*\r) -- (\th/12-1*\th:2.5*\r) 
	(90-\th/6-2*\th:2.5*\r) -- (90-\th/6-2*\th:3*\r) 
	(42:2.5*\r) -- (90-\th/6:3*\r) 
	(42+\th:2.5*\r) -- (90-\th/6+\th:3*\r) 
	(42+2*\th:2.5*\r) -- (90-\th/6+2*\th:3*\r) 
	(42-\th:2.5*\r) -- (90-\th/6-\th:3*\r) 
	(42+3*\th:2.5*\r) -- (\th/12+3*\th:3*\r) 
;

\node at ($(90-\th:0.5*\r) !1/2! (90-0.5*\th:1*\r)$) {\footnotesize $1$};
\node at ($(90:0.5*\r) !1/2! (90+0.5*\th:1*\r)$) {\footnotesize $2$};
\node at ($(90+\th:0.5*\r) !1/2! (90+1.5*\th:1*\r)$) {\footnotesize $3$};
\node at ($(90+2*\th:0.5*\r) !1/2! (90+2*\th:1*\r)$) {\footnotesize $4$};
\node at ($(90-2*\th:0.5*\r) !1/2! (90-2.5*\th:1*\r)$) {\footnotesize $5$};
\node at ($(90-2*\th:0.5*\r) !1/2! (90-1.5*\th:1*\r)$) {\footnotesize $6$};
\node at ($(90-\th:1*\r) !1/2! (90-1.5*\th:1*\r)$) {\footnotesize $9$};
\node at ($(90+\th:1*\r) !1/2! (90+0.5*\th:1*\r)$) {\footnotesize $18$};
\node at ($(90+2*\th:1*\r) !1/2! (5*\th/6+2*\th:1.5*\r)$) {\footnotesize $7$};
\node at ($(90-2*\th:1*\r) !1/2! (5*\th/6-2*\th:1.5*\r)$) {\footnotesize $8$};
\node at ($(\th/6:1.5*\r) !1/2! (-\th/6:1.5*\r)$) {\footnotesize $10$};
\node at ($(\th/6+\th:1.5*\r) !1/2! (-\th/6+\th:1.5*\r)$) {\footnotesize $14$};
\node at ($(\th/6+2*\th:1.5*\r) !1/2! (-\th/6+2*\th:1.5*\r)$) {\footnotesize $19$};

\node at ($(\th/6:1.5*\r) !1/2! (\th/6:2*\r)$) {\footnotesize $12$};
\node at ($(90:1*\r) !1/2! (90-0.5*\th:1*\r)$) {\footnotesize $13$};
\node at ($(5*\th/6-1*\th:2*\r) !1/2! (0.5*\th-1*\th:1.5*\r)$) {\footnotesize $11$};
\node at ($(5*\th/6:2*\r) !1/2! (0.5*\th:1.5*\r)$) {\footnotesize $15$};
\node at ($(\th/6:2*\r) !1/2! (0.5*\th:2*\r)$) {\footnotesize $16$};

\node at ($(5*\th/6+\th:2*\r) !1/2! (0.5*\th+\th:1.5*\r) $) {\footnotesize $20$};
\node at ($(\th/6+\th:1.5*\r) !1/2! (\th/6+\th:2*\r)$) {\footnotesize $21$};
\node at ($(\th/6+\th:2*\r) !1/2! (0.5*\th+\th:2*\r)$) {\footnotesize $22$};
\node at ($(\th/6+2*\th:1.5*\r) !1/2! (\th/6+2*\th:2*\r)$) {\footnotesize $27$};
\node at ($(\th/6+2*\th:2*\r) !1/2! (0.5*\th+2*\th:2*\r)$) {\footnotesize $29$};

\node at ($(5*\th/6+2*\th:2*\r) !1/2! (0.5*\th+2*\th:1.5*\r)$) {\footnotesize $30$};
\node at ($(90-\th/12+2*\th:1.5*\r) !1/2! (5*\th/6+2*\th:2*\r)$) {\footnotesize $31$};
\node at ($(0.5*\th+3*\th:1.5*\r) !1/2! (\th/6+3*\th:2*\r)$) {\footnotesize $32$};
\node at ($(0.5*\th+3*\th:2*\r) !1/2! (\th/12+3*\th:2.5*\r) $) {\footnotesize $33$};
\node at ($(5*\th/6+3*\th:2*\r) !1/2! (0.5*\th+3*\th:1.5*\r)$) {\footnotesize $34$};
\node at ($(90-\th/12+3*\th:1.5*\r) !1/2! (5*\th/6+3*\th:2*\r)$) {\footnotesize $35$};
\node at ($(0.5*\th-1*\th:1.5*\r) !1/2! (\th/6-1*\th:2*\r)$) {\footnotesize $36$};
\node at ($(0.5*\th-1*\th:2*\r) !1/2! (\th/12-1*\th:2.5*\r) $) {\footnotesize $37$};

\node at ($(42:2.5*\r) !1/2! (\th/12:2.5*\r)$) {\footnotesize $17$};
\node at ($(42+\th:2.5*\r) !1/2! (\th/12+\th:2.5*\r)$) {\footnotesize $23$};
\node at ($(42+2*\th:2.5*\r) !1/2! (\th/12+2*\th:2.5*\r)$) {\footnotesize $28$};
\node at ($(90-\th/6-2*\th:2.5*\r) !1/2! (90-\th/6-2*\th:3*\r)$) {\footnotesize $38$};

\node at ($(42-\th:2.5*\r) !1/2! (90-\th/6-\th:3*\r)$) {\footnotesize $26$};
\node at ($(42:2.5*\r) !1/2! (90-\th/6:3*\r)$) {\footnotesize $24$};
\node at ($(42+\th:2.5*\r) !1/2! (90-\th/6+\th:3*\r)$) {\footnotesize $25$};
\node at ($(42+3*\th:2.5*\r) !1/2! (\th/12+3*\th:3*\r) $) {\footnotesize $39$};
\node at ($(42+2*\th:2.5*\r) !1/2! (90-\th/6+2*\th:3*\r)$) {\footnotesize $40$};


\draw[]
	(90+2*\th:0.5*\r) -- (90+2.5*\th:1*\r) 
	(42+3*\th:2.5*\r) -- (90-\th/6+3*\th:3*\r)
;

\foreach \a in {0,1,2,3} {

\draw[rotate=\a*\th]
	(90-2*\th:0.5*\r) -- (90-2*\th:1*\r)
	(90-\th:0.5*\r) -- (90-1.5*\th:1*\r)
	(90-\th/6-\th:2.5*\r) -- (90-\th/6-\th:3*\r)
	(42-\th:2.5*\r) -- (\th/12-\th:3*\r)
;

}

\foreach \a in {0,1,2} {

\draw[rotate=\a*\th]
	(90-\th:1*\r) -- (5*\th/6-\th:1.5*\r)
	(90-\th/12-\th:1.5*\r) --  (5*\th/6-\th:2*\r) 
	(0.5*\th:1.5*\r) -- (\th/6:2*\r) 
	(0.5*\th:2*\r) -- (\th/12:2.5*\r) 
;

}

\foreach \a in {0,1} {

\draw[rotate=\a*\th]
	(90+2*\th:1*\r) -- (90+1.5*\th:1*\r) 
	(\th/6+3*\th:2*\r) -- (0.5*\th+3*\th:2*\r) 
	(\th/6+3*\th:1.5*\r) -- (\th/6+3*\th:2*\r) 
	(\th/6+3*\th:1.5*\r) -- (-\th/6+3*\th:1.5*\r)
	(42+3*\th:2.5*\r) -- (\th/12+3*\th:2.5*\r) 
;

}

\foreach \a in {0,...,4}
{
\begin{scope}[rotate=\a*\th]

\draw
	(90:0.5*\r) -- (90-\th:0.5*\r)  
	(90-0.5*\th:1*\r) -- (90-\th:1*\r) 
	(\th/6:1.5*\r) -- (0.5*\th:1.5*\r) 
	(0.5*\th:1.5*\r) -- (5*\th/6:1.5*\r)  
	(90-0.5*\th:1*\r) -- (5*\th/6:1.5*\r) 
	(90:1*\r) -- (90-\th/12:1.5*\r) 
	(0.5*\th:1.5*\r) -- (0.5*\th:2*\r) 
	(-\th/6:2*\r) -- (\th/6:2*\r) 
	(0.5*\th:2*\r) -- (5*\th/6:2*\r) 
	(90-\th/6:2.5*\r) -- (42:2.5*\r) 
	(0.5*\th:2*\r) -- (42:2.5*\r) 
	(\th/6:2*\r) -- (\th/12:2.5*\r) 
	(-\th/6:1.5*\r) -- (-\th/6:2*\r) 
;

\begin{scope}[red!50]

	(54:0.6*\r) circle (0.05*\r)
	(30:0.8*\r) circle (0.05*\r)
	(6:0.8*\r) circle (0.05*\r)
	(-6:1.1*\r) circle (0.05*\r)
	(6:1.3*\r) circle (0.05*\r)
	(-6:1.6*\r) circle (0.05*\r)
	(-18:1.6*\r) circle (0.05*\r)
	(-30:1.8*\r) circle (0.05*\r)
	(6:1.8*\r) circle (0.05*\r)
	(18:1.6*\r) circle (0.05*\r)
	(30:1.8*\r) circle (0.05*\r)
	(15:2.1*\r) circle (0.05*\r)
	(30:2.2*\r) circle (0.05*\r)
	(12:2.7*\r) circle (0.05*\r)
	(0:2.7*\r) circle (0.05*\r)
	(-30:2.8*\r) circle (0.05*\r);
	
	(-18:0.6*\r) -- (6:0.8*\r) -- (30:0.8*\r) -- (54:0.6*\r)
	(6:0.8*\r) -- (-6:1.1*\r) -- (6:1.3*\r) -- (-6:1.6*\r) -- (6:1.8*\r) -- (18:1.6*\r) -- (30:1.8*\r) -- (42:1.8*\r) -- (54:1.6*\r) -- (66:1.6*\r)
	(30:1.8*\r)  -- (15:2.1*\r) -- (30:2.2*\r) -- (12:2.7*\r) -- (0:2.7*\r) -- (-30:2.8*\r) -- (-60:2.7*\r);

\end{scope}

\end{scope}
}

\draw
	(0,0) circle (3*\r)
;

\fill 
	(90:0.5*\r) circle (\rr)
	(90-\th:0.5*\r) circle (\rr)
	(90+\th:0.5*\r) circle (\rr)
	(90+1.5*\th:1*\r) circle (\rr)
	(5*\th/6+2*\th:1.5*\r) circle (\rr)
	(0.5*\th+2*\th:1.5*\r) circle (\rr)
	(-\th/6:2*\r) circle (\rr)
	(0.5*\th-1*\th:2*\r) circle (\rr)
	(42-\th:2.5*\r) circle (\rr)
	(\th/12:3*\r) circle (\rr)
	(\th/12+\th:3*\r) circle (\rr)
	(\th/12+2*\th:3*\r) circle (\rr)
;

\node at (0,0) {\footnotesize $P_1$};
\node at (0.5*\th:1.2*\r) {\footnotesize $P_2$};
\node at (0.5*\th+\th:1.2*\r) {\footnotesize $P_3$};
\node at (0.5*\th+2*\th:1.2*\r) {\footnotesize $P_4$};
\node at (0.5*\th+3*\th:1.2*\r) {\footnotesize $P_5$};
\node at (0.5*\th-1*\th:1.2*\r) {\footnotesize $P_6$};

\node at (0.8*\th:2.2*\r) {\footnotesize $P_7$};
\node at (0.8*\th+\th:2.2*\r) {\footnotesize $P_8$};
\node at (0.8*\th+2*\th:2.2*\r) {\footnotesize $P_9$};
\node at (0.85*\th+3*\th:2.2*\r) {\footnotesize $P_{10}$};
\node at (0.725*\th-\th:2.2*\r) {\footnotesize $P_{11}$};

\node at (90:3.25*\r) {\footnotesize $P_{12}$};

\end{scope} 

\begin{scope}[xshift=7.5*\s cm] 

\fill[gray!50]
	(0,0) circle (3.2*\r)
;

\fill[white]
	(0,0) circle (3*\r)
;

\fill[gray!50]
	(90:0.5*\r) -- (90+\th:0.5*\r) -- (90+2*\th:0.5*\r) -- (90+3*\th:0.5*\r) -- (90+4*\th:0.5*\r)
;

\foreach \a in {0,...,4} {

\fill[gray!50, rotate=\a*\th]
	(90-\th:1*\r) -- (\th/6:1.5*\r) -- (0.5*\th:1.5*\r) -- (5*\th/6:1.5*\r)  -- (90-0.5*\th:1*\r) -- cycle
;

\fill[gray!50, rotate=\a*\th]
	(90-\th/12:2*\r) --  (5*\th/6:2*\r)  --  (0.5*\th:2*\r)  -- (42:2.5*\r) --  (90-\th/6:2.5*\r)  -- cycle
;

}

\draw[gray!20, line width=1.05]
	(90-\th:0.5*\r) -- (90-0.5*\th:1*\r) 
	(90:0.5*\r) -- (90+0.5*\th:1*\r) 
	(90+\th:0.5*\r) -- (90+1.5*\th:1*\r) 
	(90+2*\th:0.5*\r) -- (90+2*\th:1*\r) 
	(90-2*\th:0.5*\r) -- (90-2.5*\th:1*\r) 
	(90-2*\th:0.5*\r) -- (90-1.5*\th:1*\r) 
	(90:1*\r) -- (90-0.5*\th:1*\r) 
	(90-\th:1*\r) -- (90-1.5*\th:1*\r) 
	(90+\th:1*\r) -- (90+0.5*\th:1*\r) 
	(90-2*\th:1*\r) -- (5*\th/6-2*\th:1.5*\r) 
	(90+2*\th:1*\r) -- (5*\th/6+2*\th:1.5*\r) 
	(\th/6:1.5*\r) -- (-\th/6:1.5*\r) 
	(\th/6+\th:1.5*\r) -- (-\th/6+\th:1.5*\r) 
	(\th/6+2*\th:1.5*\r) -- (-\th/6+2*\th:1.5*\r) 
	(5*\th/6-1*\th:2*\r) -- (0.5*\th-1*\th:1.5*\r) 
	(\th/6:1.5*\r) -- (\th/6:2*\r) 
	(5*\th/6:2*\r) -- (0.5*\th:1.5*\r) 
	(\th/6:2*\r) -- (0.5*\th:2*\r) 
	(42:2.5*\r) -- (\th/12:2.5*\r) 
	(5*\th/6+\th:2*\r) -- (0.5*\th+\th:1.5*\r) 
	(\th/6+\th:1.5*\r) -- (\th/6+\th:2*\r) 
	(\th/6+\th:2*\r) -- (0.5*\th+\th:2*\r) 
	(42+\th:2.5*\r) -- (\th/12+\th:2.5*\r) 
	(42:2.5*\r) -- (90-\th/6:3*\r) 
	(42+\th:2.5*\r) -- (90-\th/6+\th:3*\r) 
	(42-\th:2.5*\r) -- (90-\th/6-\th:3*\r) 
	(\th/6+2*\th:1.5*\r) -- (\th/6+2*\th:2*\r) 
	(42+2*\th:2.5*\r) -- (\th/12+2*\th:3*\r) 
	(0.5*\th+2*\th:2*\r) -- (\th/12+2*\th:2.5*\r) 
	(0.5*\th+2*\th:1.5*\r) -- (0.5*\th+2*\th:2*\r) 
	(90-\th/6+2*\th:2.5*\r) -- (90-\th/6+2*\th:3*\r) 
	(-\th/6+3*\th:1.5*\r) -- (-\th/6+3*\th:2*\r) 
	(\th/6+3*\th:1.5*\r) -- (\th/6+3*\th:2*\r)  
	(0.5*\th+3*\th:1.5*\r) -- (0.5*\th+3*\th:2*\r) 
	(0.5*\th+3*\th:2*\r) -- (\th/12+3*\th:2.5*\r) 
	(-\th/6-1*\th:1.5*\r) -- (-\th/6-1*\th:2*\r) 
	(\th/6-1*\th:1.5*\r) -- (\th/6-1*\th:2*\r)  
	(\th/6-\th:2*\r) -- (0.5*\th-\th:2*\r) 
	(42-1*\th:2.5*\r) -- (\th/12-1*\th:2.5*\r)  
	(42+3*\th:2.5*\r) -- (90-\th/6+3*\th:3*\r) 
;

\node at ($(90-\th:0.5*\r) !1/2! (90-0.5*\th:1*\r)$) {\footnotesize $1$};
\node at ($(90:0.5*\r) !1/2! (90+0.5*\th:1*\r)$) {\footnotesize $2$};
\node at ($(90+\th:0.5*\r) !1/2! (90+1.5*\th:1*\r)$) {\footnotesize $3$};
\node at ($(90+2*\th:0.5*\r) !1/2! (90+2*\th:1*\r)$) {\footnotesize $4$};
\node at ($(90-2*\th:0.5*\r) !1/2! (90-2.5*\th:1*\r)$) {\footnotesize $5$};
\node at ($(90-2*\th:0.5*\r) !1/2! (90-1.5*\th:1*\r)$) {\footnotesize $6$};
\node at ($(90-\th:1*\r) !1/2! (90-1.5*\th:1*\r)$) {\footnotesize $9$};
\node at ($(90+\th:1*\r) !1/2! (90+0.5*\th:1*\r)$) {\footnotesize $18$};
\node at ($(90+2*\th:1*\r) !1/2! (5*\th/6+2*\th:1.5*\r)$) {\footnotesize $7$};
\node at ($(90-2*\th:1*\r) !1/2! (5*\th/6-2*\th:1.5*\r)$) {\footnotesize $8$};
\node at ($(\th/6:1.5*\r) !1/2! (-\th/6:1.5*\r)$) {\footnotesize $10$};
\node at ($(\th/6+\th:1.5*\r) !1/2! (-\th/6+\th:1.5*\r)$) {\footnotesize $14$};
\node at ($(\th/6+2*\th:1.5*\r) !1/2! (-\th/6+2*\th:1.5*\r)$) {\footnotesize $19$};

\node at ($(\th/6:1.5*\r) !1/2! (\th/6:2*\r)$) {\footnotesize $12$};
\node at ($(90:1*\r) !1/2! (90-0.5*\th:1*\r)$) {\footnotesize $13$};
\node at ($(5*\th/6-1*\th:2*\r) !1/2! (0.5*\th-1*\th:1.5*\r)$) {\footnotesize $11$};
\node at ($(5*\th/6:2*\r) !1/2! (0.5*\th:1.5*\r)$) {\footnotesize $15$};
\node at ($(\th/6:2*\r) !1/2! (0.5*\th:2*\r)$) {\footnotesize $16$};

\node at ($(5*\th/6+\th:2*\r) !1/2! (0.5*\th+\th:1.5*\r) $) {\footnotesize $20$};
\node at ($(\th/6+\th:1.5*\r) !1/2! (\th/6+\th:2*\r)$) {\footnotesize $21$};
\node at ($(\th/6+\th:2*\r) !1/2! (0.5*\th+\th:2*\r)$) {\footnotesize $22$};
\node at ($(\th/6+2*\th:1.5*\r) !1/2! (\th/6+2*\th:2*\r)$) {\footnotesize $27$};

\node at ($(0.5*\th+2*\th:2*\r) !1/2! (\th/12+2*\th:2.5*\r) $) {\footnotesize $29$};
\node at ($(0.5*\th+2*\th:1.5*\r) !1/2! (0.5*\th+2*\th:2*\r)$) {\footnotesize $30$};
\node at ($(-\th/6+3*\th:1.5*\r) !1/2! (-\th/6+3*\th:2*\r)$) {\footnotesize $32$};
\node at ($(90-\th/6+2*\th:2.5*\r) !1/2! (90-\th/6+2*\th:3*\r)$) {\footnotesize $31$};

\node at ($(\th/6+3*\th:1.5*\r) !1/2! (\th/6+3*\th:2*\r) $) {\footnotesize $33$};
\node at ($(0.5*\th+3*\th:1.5*\r) !1/2! (0.5*\th+3*\th:2*\r)$) {\footnotesize $34$};
\node at ($(0.5*\th+3*\th:2*\r) !1/2! (\th/12+3*\th:2.5*\r)$) {\footnotesize $35$};
\node at ($(-\th/6-1*\th:1.5*\r) !1/2! (-\th/6-1*\th:2*\r)$) {\footnotesize $36$};
\node at ($(\th/6-1*\th:1.5*\r) !1/2! (\th/6-1*\th:2*\r)$) {\footnotesize $37$};

\node at ($(42:2.5*\r) !1/2! (\th/12:2.5*\r)$) {\footnotesize $17$};
\node at ($(42+\th:2.5*\r) !1/2! (\th/12+\th:2.5*\r)$) {\footnotesize $23$};
\node at ($(42+2*\th:2.5*\r) !1/2! (\th/12+2*\th:3*\r)$) {\footnotesize $28$};
\node at ($(\th/6-\th:2*\r) !1/2! (0.5*\th-\th:2*\r)$) {\footnotesize $38$};

\node at ($(42-\th:2.5*\r) !1/2! (90-\th/6-\th:3*\r)$) {\footnotesize $26$};
\node at ($(42:2.5*\r) !1/2! (90-\th/6:3*\r)$) {\footnotesize $24$};
\node at ($(42+\th:2.5*\r) !1/2! (90-\th/6+\th:3*\r)$) {\footnotesize $25$};
\node at ($(42-1*\th:2.5*\r) !1/2! (\th/12-1*\th:2.5*\r)$) {\footnotesize $39$};
\node at ($(42+3*\th:2.5*\r) !1/2! (90-\th/6+3*\th:3*\r)$) {\footnotesize $40$};

\draw[]
	(90+2*\th:0.5*\r) -- (90+2.5*\th:1*\r) 
	(42+2*\th:2.5*\r) -- (90-\th/6+2*\th:3*\r)
;

\foreach \a in {0,1,2,3} {

\draw[rotate=\a*\th]
	(90-2*\th:0.5*\r) -- (90-2*\th:1*\r)
	(90-\th:0.5*\r) -- (90-1.5*\th:1*\r)
	(90-\th/6-2*\th:2.5*\r) -- (90-\th/6-2*\th:3*\r)
	(42-2*\th:2.5*\r) -- (\th/12-2*\th:3*\r) 
;

}

\foreach \a in {0,1,2} {

\draw[rotate=\a*\th]
	(90-\th:1*\r) -- (5*\th/6-\th:1.5*\r)
	(90-\th/12-\th:1.5*\r) --  (5*\th/6-\th:2*\r) 
	(0.5*\th-\th:1.5*\r) -- (0.5*\th-\th:2*\r) 
	(0.5*\th:1.5*\r) -- (\th/6:2*\r) 
	(-\th/6:1.5*\r) -- (-\th/6:2*\r) 
	(0.5*\th-\th:2*\r) -- (\th/12-\th:2.5*\r)
;

}

\foreach \a in {0,1} {

\draw[rotate=\a*\th]
	(90+2*\th:1*\r) -- (90+1.5*\th:1*\r) 
	%
	%
	(90-\th/12+2*\th:1.5*\r) --  (5*\th/6+2*\th:2*\r)
	(0.5*\th+3*\th:1.5*\r) -- (\th/6+3*\th:2*\r) 
	(\th/6+3*\th:1.5*\r) -- (-\th/6+3*\th:1.5*\r)
	(5*\th/6+2*\th:2*\r) -- (0.5*\th+2*\th:1.5*\r)
	(\th/6+2*\th:2*\r) -- (0.5*\th+2*\th:2*\r) 
	(42+2*\th:2.5*\r) -- (\th/12+2*\th:2.5*\r) 
;

}

\foreach \a in {0,...,4}
{
\begin{scope}[rotate=\a*\th]

\draw
	(90:0.5*\r) -- (90-\th:0.5*\r)  
	(90-0.5*\th:1*\r) -- (90-\th:1*\r) 
	(\th/6:1.5*\r) -- (0.5*\th:1.5*\r) 
	(0.5*\th:1.5*\r) -- (5*\th/6:1.5*\r)  
	(90-0.5*\th:1*\r) -- (5*\th/6:1.5*\r) 
	(90:1*\r) -- (90-\th/12:1.5*\r) 
	(-\th/6:2*\r) -- (\th/6:2*\r) 
	(0.5*\th:2*\r) -- (5*\th/6:2*\r) 
	(90-\th/6:2.5*\r) -- (42:2.5*\r) 
	(0.5*\th:2*\r) -- (42:2.5*\r) 
	(\th/6:2*\r) -- (\th/12:2.5*\r) 
;

\begin{scope}[red!50]

	(54:0.6*\r) circle (0.05*\r)
	(30:0.8*\r) circle (0.05*\r)
	(6:0.8*\r) circle (0.05*\r)
	(-6:1.1*\r) circle (0.05*\r)
	(6:1.3*\r) circle (0.05*\r)
	(-6:1.6*\r) circle (0.05*\r)
	(-18:1.6*\r) circle (0.05*\r)
	(-30:1.8*\r) circle (0.05*\r)
	(6:1.8*\r) circle (0.05*\r)
	(18:1.6*\r) circle (0.05*\r)
	(30:1.8*\r) circle (0.05*\r)
	(15:2.1*\r) circle (0.05*\r)
	(30:2.2*\r) circle (0.05*\r)
	(12:2.7*\r) circle (0.05*\r)
	(0:2.7*\r) circle (0.05*\r)
	(-30:2.8*\r) circle (0.05*\r);
	
	(-18:0.6*\r) -- (6:0.8*\r) -- (30:0.8*\r) -- (54:0.6*\r)
	(6:0.8*\r) -- (-6:1.1*\r) -- (6:1.3*\r) -- (-6:1.6*\r) -- (6:1.8*\r) -- (18:1.6*\r) -- (30:1.8*\r) -- (42:1.8*\r) -- (54:1.6*\r) -- (66:1.6*\r)
	(30:1.8*\r)  -- (15:2.1*\r) -- (30:2.2*\r) -- (12:2.7*\r) -- (0:2.7*\r) -- (-30:2.8*\r) -- (-60:2.7*\r);

\end{scope}

\end{scope}
}

\draw
	(0,0) circle (3*\r)
;

\fill 
	(90:0.5*\r) circle (\rr)
	(90-\th:0.5*\r) circle (\rr)
	(90+\th:0.5*\r) circle (\rr)
	(90-2*\th:1*\r) circle (\rr)
	(0.5*\th-1*\th:1.5*\r) circle (\rr)
	(90-\th/12-2*\th:1.5*\r) circle (\rr)
	(-\th/6+2*\th:2*\r) circle (\rr)
	(\th/6+2*\th:2*\r) circle (\rr) 
	(\th/12+2*\th:2.5*\r) circle (\rr)
	(\th/12:3*\r) circle (\rr)
	(\th/12+\th:3*\r) circle (\rr)
	(\th/12-\th:3*\r) circle (\rr)
;

\node at (0,0) {\footnotesize $P_1$};
\node at (0.5*\th:1.2*\r) {\footnotesize $P_2$};
\node at (0.5*\th+\th:1.2*\r) {\footnotesize $P_3$};
\node at (0.5*\th+2*\th:1.2*\r) {\footnotesize $P_4$};
\node at (0.5*\th+3*\th:1.2*\r) {\footnotesize $P_5$};
\node at (0.5*\th-1*\th:1.2*\r) {\footnotesize $P_6$};

\node at (0.8*\th:2.2*\r) {\footnotesize $P_7$};
\node at (0.8*\th+\th:2.2*\r) {\footnotesize $P_8$};
\node at (0.8*\th+2*\th:2.2*\r) {\footnotesize $P_9$};
\node at (0.85*\th+3*\th:2.2*\r) {\footnotesize $P_{10}$};
\node at (0.725*\th-\th:2.2*\r) {\footnotesize $P_{11}$};

\node at (90:3.25*\r) {\footnotesize $P_{12}$};

\end{scope} 

\end{tikzpicture}
\caption{Dihedral tilings with $\alpha\beta^2, \alpha\beta\gamma^2$ and pentagons with exactly three vertices of angle arrangement $\vert \alpha \vert \beta \vert \gamma \vert \gamma \vert$, the trio $\vert \alpha \vert \beta \vert \gamma \vert \gamma \vert$'s are denoted by \quotes{$\bullet$}}
\label{Fig-a5-a4-Tilings-albe2-albega2-Pen3albegaga}
\end{figure}

\end{case*}

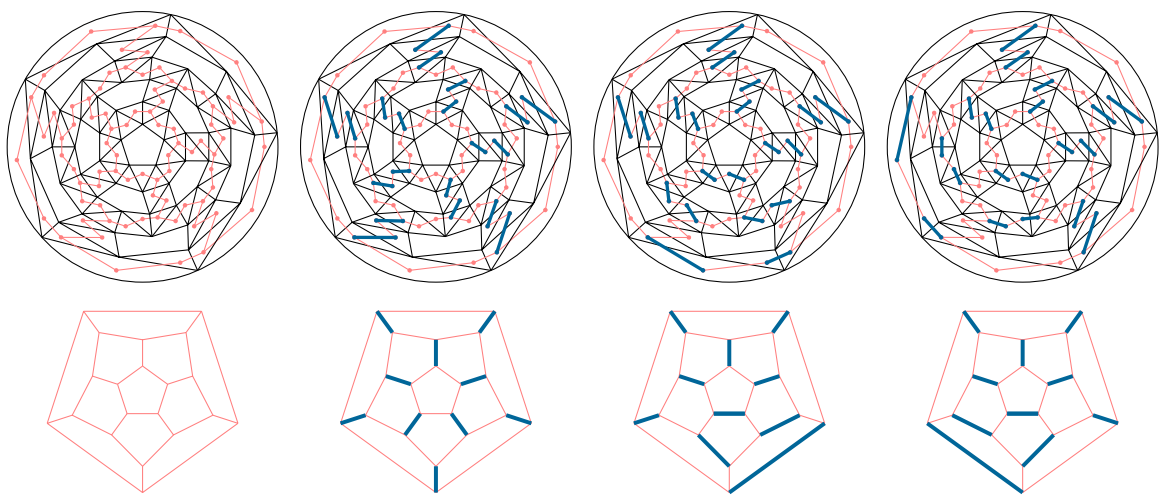
\begin{figure}[h!]
\centering
\begin{tikzpicture}[>=latex,scale=1]

\tikzmath{
\s=0.975;
}

\begin{scope} 

\tikzmath{
\r=0.6;
\th=360/5;
\x=\r*cos(\th/2);
}

\begin{scope}

	(0,0) circle (3.2*\r)
;

	(0,0) circle (3*\r)
;

	(90:0.5*\r) -- (90+\th:0.5*\r) -- (90+2*\th:0.5*\r) -- (90+3*\th:0.5*\r) -- (90+4*\th:0.5*\r)
;

\foreach \a in {0,...,4} {

	(90-\th:1*\r) -- (\th/6:1.5*\r) -- (0.5*\th:1.5*\r) -- (5*\th/6:1.5*\r)  -- (90-0.5*\th:1*\r) -- cycle
;

	(90-\th/12:2*\r) --  (5*\th/6:2*\r)  --  (0.5*\th:2*\r)  -- (42:2.5*\r) --  (90-\th/6:2.5*\r)  -- cycle
;

}

\foreach \a in {0,...,4}
{
\begin{scope}[rotate=\a*\th]

\draw
	(90:0.5*\r) -- (90-\th:0.5*\r)  
	(90:0.5*\r) -- (90-0.5*\th:1*\r) 
	(90-0.5*\th:1*\r) -- (90-\th:0.5*\r) 
	(90-\th:0.5*\r) -- (90-\th:1*\r) 
	(90-0.5*\th:1*\r) -- (90-\th:1*\r) 
	(90-\th:1*\r) -- (90-1.5*\th:1*\r) 
	(\th/6:1.5*\r) -- (0.5*\th:1.5*\r)  
	(0.5*\th:1.5*\r) -- (5*\th/6:1.5*\r) 
	%
	(90-0.5*\th:1*\r) -- (5*\th/6:1.5*\r) 
	(90:1*\r) -- (5*\th/6:1.5*\r) 
	(90:1*\r) -- (90-\th/12:1.5*\r) 
	(90-\th/12:1.5*\r) --  (5*\th/6:2*\r) 
	(-\th/6:2*\r) -- (\th/6:2*\r) 
	(\th/6:2*\r) -- (0.5*\th:2*\r) 
	(0.5*\th:2*\r) -- (5*\th/6:2*\r) 
	(0.5*\th:1.5*\r) -- (0.5*\th:2*\r) 
	(5*\th/6:2*\r) -- (0.5*\th:1.5*\r) 
	(0.5*\th:1.5*\r) -- (\th/6:2*\r) 
	(\th/6:1.5*\r) -- (\th/6:2*\r) 
	(\th/6:1.5*\r) -- (-\th/6:1.5*\r) 
	(-\th/6:1.5*\r) -- (-\th/6:2*\r) 
	(90-\th/6:2.5*\r) -- (42:2.5*\r) 
	(42:2.5*\r) -- (\th/12:2.5*\r) 
	(0.5*\th:2*\r) -- (42:2.5*\r) 
	(0.5*\th:2*\r) -- (\th/12:2.5*\r) 
	(\th/6:2*\r) -- (\th/12:2.5*\r) 
	(90-\th/6:2.5*\r) -- (90-\th/6:3*\r) 
	(42:2.5*\r) -- (90-\th/6:3*\r) 
	(42:2.5*\r) -- (\th/12:3*\r) 
;

\begin{scope}[red!50]

\fill
	(54:0.6*\r) circle (0.05*\r)
	(30:0.8*\r) circle (0.05*\r)
	(6:0.8*\r) circle (0.05*\r)
	(-6:1.1*\r) circle (0.05*\r)
	(6:1.3*\r) circle (0.05*\r)
	(-6:1.6*\r) circle (0.05*\r)
	(-18:1.6*\r) circle (0.05*\r)
	(-30:1.8*\r) circle (0.05*\r)
	(6:1.8*\r) circle (0.05*\r)
	(18:1.6*\r) circle (0.05*\r)
	(30:1.8*\r) circle (0.05*\r)
	(15:2.1*\r) circle (0.05*\r)
	(30:2.2*\r) circle (0.05*\r)
	(12:2.7*\r) circle (0.05*\r)
	(0:2.7*\r) circle (0.05*\r)
	(-30:2.8*\r) circle (0.05*\r);
	
\draw
	(-18:0.6*\r) -- (6:0.8*\r) -- (30:0.8*\r) -- (54:0.6*\r)
	(6:0.8*\r) -- (-6:1.1*\r) -- (6:1.3*\r) -- (-6:1.6*\r) -- (6:1.8*\r) -- (18:1.6*\r) -- (30:1.8*\r) -- (42:1.8*\r) -- (54:1.6*\r) -- (66:1.6*\r)
	(30:1.8*\r)  -- (15:2.1*\r) -- (30:2.2*\r) -- (12:2.7*\r) -- (0:2.7*\r) -- (-30:2.8*\r) -- (-60:2.7*\r);

\end{scope}

\end{scope}
}

\draw
	(0,0) circle (3*\r)
;

\end{scope}

\begin{scope}[xshift=4*\s cm]

	(0,0) circle (3.2*\r)
;

	(0,0) circle (3*\r)
;

	(90:0.5*\r) -- (90+\th:0.5*\r) -- (90+2*\th:0.5*\r) -- (90+3*\th:0.5*\r) -- (90+4*\th:0.5*\r)
;

\foreach \a in {0,...,4} {

	(90-\th:1*\r) -- (\th/6:1.5*\r) -- (0.5*\th:1.5*\r) -- (5*\th/6:1.5*\r)  -- (90-0.5*\th:1*\r) -- cycle
;

	(90-\th/12:2*\r) --  (5*\th/6:2*\r)  --  (0.5*\th:2*\r)  -- (42:2.5*\r) --  (90-\th/6:2.5*\r)  -- cycle
;

}

\foreach \a in {0,...,4}
{
\begin{scope}[rotate=\a*\th]

\draw
	(90:0.5*\r) -- (90-\th:0.5*\r)  
	(90:0.5*\r) -- (90-0.5*\th:1*\r) 
	(90-0.5*\th:1*\r) -- (90-\th:0.5*\r) 
	(90-\th:0.5*\r) -- (90-\th:1*\r) 
	(90-0.5*\th:1*\r) -- (90-\th:1*\r) 
	(90-\th:1*\r) -- (90-1.5*\th:1*\r) 
	(\th/6:1.5*\r) -- (0.5*\th:1.5*\r)  
	(0.5*\th:1.5*\r) -- (5*\th/6:1.5*\r) 
	%
	(90-0.5*\th:1*\r) -- (5*\th/6:1.5*\r) 
	(90:1*\r) -- (5*\th/6:1.5*\r) 
	(90:1*\r) -- (90-\th/12:1.5*\r) 
	(90-\th/12:1.5*\r) --  (5*\th/6:2*\r) 
	(-\th/6:2*\r) -- (\th/6:2*\r) 
	(\th/6:2*\r) -- (0.5*\th:2*\r) 
	(0.5*\th:2*\r) -- (5*\th/6:2*\r) 
	(0.5*\th:1.5*\r) -- (0.5*\th:2*\r) 
	(5*\th/6:2*\r) -- (0.5*\th:1.5*\r) 
	(0.5*\th:1.5*\r) -- (\th/6:2*\r) 
	(\th/6:1.5*\r) -- (\th/6:2*\r) 
	(\th/6:1.5*\r) -- (-\th/6:1.5*\r) 
	(-\th/6:1.5*\r) -- (-\th/6:2*\r) 
	(90-\th/6:2.5*\r) -- (42:2.5*\r) 
	(42:2.5*\r) -- (\th/12:2.5*\r) 
	(0.5*\th:2*\r) -- (42:2.5*\r) 
	(0.5*\th:2*\r) -- (\th/12:2.5*\r) 
	(\th/6:2*\r) -- (\th/12:2.5*\r) 
	(90-\th/6:2.5*\r) -- (90-\th/6:3*\r) 
	(42:2.5*\r) -- (90-\th/6:3*\r) 
	(42:2.5*\r) -- (\th/12:3*\r) 
;

\begin{scope}[red!50]

\fill
	(54:0.6*\r) circle (0.05*\r)
	(30:0.8*\r) circle (0.05*\r)
	(6:0.8*\r) circle (0.05*\r)
	(-6:1.1*\r) circle (0.05*\r)
	(6:1.3*\r) circle (0.05*\r)
	(-6:1.6*\r) circle (0.05*\r)
	(-18:1.6*\r) circle (0.05*\r)
	(-30:1.8*\r) circle (0.05*\r)
	(6:1.8*\r) circle (0.05*\r)
	(18:1.6*\r) circle (0.05*\r)
	(30:1.8*\r) circle (0.05*\r)
	(15:2.1*\r) circle (0.05*\r)
	(30:2.2*\r) circle (0.05*\r)
	(12:2.7*\r) circle (0.05*\r)
	(0:2.7*\r) circle (0.05*\r)
	(-30:2.8*\r) circle (0.05*\r);
	
\draw
	(-18:0.6*\r) -- (6:0.8*\r) -- (30:0.8*\r) -- (54:0.6*\r)
	(6:0.8*\r) -- (-6:1.1*\r) -- (6:1.3*\r) -- (-6:1.6*\r) -- (6:1.8*\r) -- (18:1.6*\r) -- (30:1.8*\r) -- (42:1.8*\r) -- (54:1.6*\r) -- (66:1.6*\r)
	(30:1.8*\r)  -- (15:2.1*\r) -- (30:2.2*\r) -- (12:2.7*\r) -- (0:2.7*\r) -- (-30:2.8*\r) -- (-60:2.7*\r);

\end{scope}

\end{scope}
}

\draw
	(0,0) circle (3*\r)
;

\foreach \a in {0,...,4} {

\fill[teal!80!blue,rotate=\a*\th] 
	(6+\th:0.8*\r) circle (0.05*\r)
	(-6+\th:1.1*\r) circle (0.05*\r)
	(6+\th:1.3*\r) circle (0.05*\r)
	(-6+\th:1.6*\r) circle (0.05*\r)
	(30+1*\th:1.8*\r) circle (0.05*\r)
	(15+1*\th:2.1*\r) circle (0.05*\r)
	(30+1*\th:2.2*\r) circle (0.05*\r) 
	(12+1*\th:2.7*\r) circle (0.05*\r)
;

\draw[teal!80!blue, line width=1.25, rotate=\a*\th]
	(6+\th:0.8*\r) -- (-6+\th:1.1*\r) 
	(6+\th:1.3*\r) -- (-6+\th:1.6*\r) 
	(30+1*\th:1.8*\r) -- (15+1*\th:2.1*\r) 
	(30+1*\th:2.2*\r) -- (12+1*\th:2.7*\r) 
;

}

\end{scope}

\begin{scope}[xshift=8*\s cm]

	(0,0) circle (3.2*\r)
;

	(0,0) circle (3*\r)
;

	(90:0.5*\r) -- (90+\th:0.5*\r) -- (90+2*\th:0.5*\r) -- (90+3*\th:0.5*\r) -- (90+4*\th:0.5*\r)
;

\foreach \a in {0,...,4} {

	(90-\th:1*\r) -- (\th/6:1.5*\r) -- (0.5*\th:1.5*\r) -- (5*\th/6:1.5*\r)  -- (90-0.5*\th:1*\r) -- cycle
;

	(90-\th/12:2*\r) --  (5*\th/6:2*\r)  --  (0.5*\th:2*\r)  -- (42:2.5*\r) --  (90-\th/6:2.5*\r)  -- cycle
;

}

\foreach \a in {0,...,4}
{
\begin{scope}[rotate=\a*\th]

\draw
	(90:0.5*\r) -- (90-\th:0.5*\r)  
	(90:0.5*\r) -- (90-0.5*\th:1*\r) 
	(90-0.5*\th:1*\r) -- (90-\th:0.5*\r) 
	(90-\th:0.5*\r) -- (90-\th:1*\r) 
	(90-0.5*\th:1*\r) -- (90-\th:1*\r) 
	(90-\th:1*\r) -- (90-1.5*\th:1*\r) 
	(\th/6:1.5*\r) -- (0.5*\th:1.5*\r)  
	(0.5*\th:1.5*\r) -- (5*\th/6:1.5*\r) 
	%
	(90-0.5*\th:1*\r) -- (5*\th/6:1.5*\r) 
	(90:1*\r) -- (5*\th/6:1.5*\r) 
	(90:1*\r) -- (90-\th/12:1.5*\r) 
	(90-\th/12:1.5*\r) --  (5*\th/6:2*\r) 
	(-\th/6:2*\r) -- (\th/6:2*\r) 
	(\th/6:2*\r) -- (0.5*\th:2*\r) 
	(0.5*\th:2*\r) -- (5*\th/6:2*\r) 
	(0.5*\th:1.5*\r) -- (0.5*\th:2*\r) 
	(5*\th/6:2*\r) -- (0.5*\th:1.5*\r) 
	(0.5*\th:1.5*\r) -- (\th/6:2*\r) 
	(\th/6:1.5*\r) -- (\th/6:2*\r) 
	(\th/6:1.5*\r) -- (-\th/6:1.5*\r) 
	(-\th/6:1.5*\r) -- (-\th/6:2*\r) 
	(90-\th/6:2.5*\r) -- (42:2.5*\r) 
	(42:2.5*\r) -- (\th/12:2.5*\r) 
	(0.5*\th:2*\r) -- (42:2.5*\r) 
	(0.5*\th:2*\r) -- (\th/12:2.5*\r) 
	(\th/6:2*\r) -- (\th/12:2.5*\r) 
	(90-\th/6:2.5*\r) -- (90-\th/6:3*\r) 
	(42:2.5*\r) -- (90-\th/6:3*\r) 
	(42:2.5*\r) -- (\th/12:3*\r) 
;

\begin{scope}[red!50]

\fill
	(54:0.6*\r) circle (0.05*\r)
	(30:0.8*\r) circle (0.05*\r)
	(6:0.8*\r) circle (0.05*\r)
	(-6:1.1*\r) circle (0.05*\r)
	(6:1.3*\r) circle (0.05*\r)
	(-6:1.6*\r) circle (0.05*\r)
	(-18:1.6*\r) circle (0.05*\r)
	(-30:1.8*\r) circle (0.05*\r)
	(6:1.8*\r) circle (0.05*\r)
	(18:1.6*\r) circle (0.05*\r)
	(30:1.8*\r) circle (0.05*\r)
	(15:2.1*\r) circle (0.05*\r)
	(30:2.2*\r) circle (0.05*\r)
	(12:2.7*\r) circle (0.05*\r)
	(0:2.7*\r) circle (0.05*\r)
	(-30:2.8*\r) circle (0.05*\r);
	
\draw
	(-18:0.6*\r) -- (6:0.8*\r) -- (30:0.8*\r) -- (54:0.6*\r)
	(6:0.8*\r) -- (-6:1.1*\r) -- (6:1.3*\r) -- (-6:1.6*\r) -- (6:1.8*\r) -- (18:1.6*\r) -- (30:1.8*\r) -- (42:1.8*\r) -- (54:1.6*\r) -- (66:1.6*\r)
	(30:1.8*\r)  -- (15:2.1*\r) -- (30:2.2*\r) -- (12:2.7*\r) -- (0:2.7*\r) -- (-30:2.8*\r) -- (-60:2.7*\r);

\end{scope}

\end{scope}
}

\draw
	(0,0) circle (3*\r)
;

\foreach \a in {-1,0,1} {

\fill[teal!80!blue,rotate=\a*\th] 
	(6+\th:0.8*\r) circle (0.05*\r)
	(-6+\th:1.1*\r) circle (0.05*\r)
	(6+\th:1.3*\r) circle (0.05*\r)
	(-6+\th:1.6*\r) circle (0.05*\r)
	(30+1*\th:1.8*\r) circle (0.05*\r)
	(15+1*\th:2.1*\r) circle (0.05*\r)
	(30+1*\th:2.2*\r) circle (0.05*\r) 
	(12+1*\th:2.7*\r) circle (0.05*\r)
;

\draw[teal!80!blue, line width=1.25, rotate=\a*\th]
	(6+\th:0.8*\r) -- (-6+\th:1.1*\r) 
	(6+\th:1.3*\r) -- (-6+\th:1.6*\r) 
	(30+1*\th:1.8*\r) -- (15+1*\th:2.1*\r) 
	(30+1*\th:2.2*\r) -- (12+1*\th:2.7*\r) 
;

}

\draw[teal!80!blue, line width=1.25]
	(6+3*\th:0.8*\r) -- (30+3*\th:0.8*\r)
	(54+3*\th:0.6*\r) -- (6+4*\th:0.8*\r)
	(-6+3*\th:1.6*\r) -- (6+3*\th:1.8*\r) 
	(18+3*\th:1.6*\r) -- (30+3*\th:1.8*\r)
	(-6+4*\th:1.6*\r) -- (6+4*\th:1.8*\r)
	(18+4*\th:1.6*\r) -- (30+4*\th:1.8*\r)
	(0+4*\th:2.7*\r) -- (12+4*\th:2.7*\r)
	(-30+4*\th:2.8*\r) -- (-60+4*\th:2.7*\r)
;

\fill[teal!80!blue] 
	(6+3*\th:0.8*\r) circle (0.05*\r)
	(30+3*\th:0.8*\r) circle (0.05*\r)
	(54+3*\th:0.6*\r) circle (0.05*\r)
	(6+4*\th:0.8*\r) circle (0.05*\r)
	(-6+4*\th:1.6*\r) circle (0.05*\r)
	(6+4*\th:1.8*\r) circle (0.05*\r)
	(18+4*\th:1.6*\r) circle (0.05*\r)
	(30+4*\th:1.8*\r) circle (0.05*\r)
	(-6+3*\th:1.6*\r) circle (0.05*\r)
	(6+3*\th:1.8*\r) circle (0.05*\r)
	(18+3*\th:1.6*\r) circle (0.05*\r)
	(30+3*\th:1.8*\r) circle (0.05*\r)
	(0+4*\th:2.7*\r) circle (0.05*\r)
	(12+4*\th:2.7*\r) circle (0.05*\r)
	(-30+4*\th:2.8*\r) circle (0.05*\r) 
	(-60+4*\th:2.7*\r) circle (0.05*\r)
;

\end{scope} 

\begin{scope}[xshift=12*\s cm]

	(0,0) circle (3.2*\r)
;

	(0,0) circle (3*\r)
;

	(90:0.5*\r) -- (90+\th:0.5*\r) -- (90+2*\th:0.5*\r) -- (90+3*\th:0.5*\r) -- (90+4*\th:0.5*\r)
;

\foreach \a in {0,...,4} {

	(90-\th:1*\r) -- (\th/6:1.5*\r) -- (0.5*\th:1.5*\r) -- (5*\th/6:1.5*\r)  -- (90-0.5*\th:1*\r) -- cycle
;

	(90-\th/12:2*\r) --  (5*\th/6:2*\r)  --  (0.5*\th:2*\r)  -- (42:2.5*\r) --  (90-\th/6:2.5*\r)  -- cycle
;

}

\foreach \a in {0,...,4}
{
\begin{scope}[rotate=\a*\th]

\draw
	(90:0.5*\r) -- (90-\th:0.5*\r)  
	(90:0.5*\r) -- (90-0.5*\th:1*\r) 
	(90-0.5*\th:1*\r) -- (90-\th:0.5*\r) 
	(90-\th:0.5*\r) -- (90-\th:1*\r) 
	(90-0.5*\th:1*\r) -- (90-\th:1*\r) 
	(90-\th:1*\r) -- (90-1.5*\th:1*\r) 
	(\th/6:1.5*\r) -- (0.5*\th:1.5*\r)  
	(0.5*\th:1.5*\r) -- (5*\th/6:1.5*\r) 
	%
	(90-0.5*\th:1*\r) -- (5*\th/6:1.5*\r) 
	(90:1*\r) -- (5*\th/6:1.5*\r) 
	(90:1*\r) -- (90-\th/12:1.5*\r) 
	(90-\th/12:1.5*\r) --  (5*\th/6:2*\r) 
	(-\th/6:2*\r) -- (\th/6:2*\r) 
	(\th/6:2*\r) -- (0.5*\th:2*\r) 
	(0.5*\th:2*\r) -- (5*\th/6:2*\r) 
	(0.5*\th:1.5*\r) -- (0.5*\th:2*\r) 
	(5*\th/6:2*\r) -- (0.5*\th:1.5*\r) 
	(0.5*\th:1.5*\r) -- (\th/6:2*\r) 
	(\th/6:1.5*\r) -- (\th/6:2*\r) 
	(\th/6:1.5*\r) -- (-\th/6:1.5*\r) 
	(-\th/6:1.5*\r) -- (-\th/6:2*\r) 
	(90-\th/6:2.5*\r) -- (42:2.5*\r) 
	(42:2.5*\r) -- (\th/12:2.5*\r) 
	(0.5*\th:2*\r) -- (42:2.5*\r) 
	(0.5*\th:2*\r) -- (\th/12:2.5*\r) 
	(\th/6:2*\r) -- (\th/12:2.5*\r) 
	(90-\th/6:2.5*\r) -- (90-\th/6:3*\r) 
	(42:2.5*\r) -- (90-\th/6:3*\r) 
	(42:2.5*\r) -- (\th/12:3*\r) 
;

\begin{scope}[red!50]

\fill
	(54:0.6*\r) circle (0.05*\r)
	(30:0.8*\r) circle (0.05*\r)
	(6:0.8*\r) circle (0.05*\r)
	(-6:1.1*\r) circle (0.05*\r)
	(6:1.3*\r) circle (0.05*\r)
	(-6:1.6*\r) circle (0.05*\r)
	(-18:1.6*\r) circle (0.05*\r)
	(-30:1.8*\r) circle (0.05*\r)
	(6:1.8*\r) circle (0.05*\r)
	(18:1.6*\r) circle (0.05*\r)
	(30:1.8*\r) circle (0.05*\r)
	(15:2.1*\r) circle (0.05*\r)
	(30:2.2*\r) circle (0.05*\r)
	(12:2.7*\r) circle (0.05*\r)
	(0:2.7*\r) circle (0.05*\r)
	(-30:2.8*\r) circle (0.05*\r);
	
\draw
	(-18:0.6*\r) -- (6:0.8*\r) -- (30:0.8*\r) -- (54:0.6*\r)
	(6:0.8*\r) -- (-6:1.1*\r) -- (6:1.3*\r) -- (-6:1.6*\r) -- (6:1.8*\r) -- (18:1.6*\r) -- (30:1.8*\r) -- (42:1.8*\r) -- (54:1.6*\r) -- (66:1.6*\r)
	(30:1.8*\r)  -- (15:2.1*\r) -- (30:2.2*\r) -- (12:2.7*\r) -- (0:2.7*\r) -- (-30:2.8*\r) -- (-60:2.7*\r);

\end{scope}

\end{scope}
}

\draw
	(0,0) circle (3*\r)
;

\foreach \a in {-1,0,1} {

\fill[teal!80!blue,rotate=\a*\th] 
	(6+\th:0.8*\r) circle (0.05*\r)
	(-6+\th:1.1*\r) circle (0.05*\r)
	(6+\th:1.3*\r) circle (0.05*\r)
	(-6+\th:1.6*\r) circle (0.05*\r)
	(30:1.8*\r) circle (0.05*\r)
	(15:2.1*\r) circle (0.05*\r)
	(30:2.2*\r) circle (0.05*\r) 
	(12:2.7*\r) circle (0.05*\r)
;

\draw[teal!80!blue, line width=1.25, rotate=\a*\th]
	(6+\th:0.8*\r) -- (-6+\th:1.1*\r) 
	(6+\th:1.3*\r) -- (-6+\th:1.6*\r) 
	(30:1.8*\r) -- (15:2.1*\r) 
	(30:2.2*\r) -- (12:2.7*\r) 
;

}

\draw[teal!80!blue, line width=1.25]
	(6+3*\th:0.8*\r) -- (30+3*\th:0.8*\r)
	(54+3*\th:0.6*\r) -- (6+4*\th:0.8*\r)
	(-6+3*\th:1.6*\r) -- (54+2*\th:1.6*\r)
	(42+2*\th:1.8*\r) -- (30+2*\th:1.8*\r) 
	(-6+4*\th:1.6*\r) -- (54+3*\th:1.6*\r)
	(42+3*\th:1.8*\r) -- (30+3*\th:1.8*\r) 
	(0+3*\th:2.7*\r) -- (12+3*\th:2.7*\r)
	(-30+3*\th:2.8*\r) -- (-60+3*\th:2.7*\r)
;

\fill[teal!80!blue] 
	(6+3*\th:0.8*\r) circle (0.05*\r)
	(30+3*\th:0.8*\r) circle (0.05*\r)
	(54+3*\th:0.6*\r) circle (0.05*\r)
	(6+4*\th:0.8*\r) circle (0.05*\r)
	(-6+4*\th:1.6*\r) circle (0.05*\r)
	(54+3*\th:1.6*\r) circle (0.05*\r)
	(42+3*\th:1.8*\r) circle (0.05*\r)
	(30+3*\th:1.8*\r) circle (0.05*\r)
	(-6+3*\th:1.6*\r) circle (0.05*\r)
	(54+2*\th:1.6*\r) circle (0.05*\r)
	(42+2*\th:1.8*\r) circle (0.05*\r)
	(30+2*\th:1.8*\r) circle (0.05*\r)
	(0+3*\th:2.7*\r) circle (0.05*\r)
	(12+3*\th:2.7*\r) circle (0.05*\r)
	(-30+3*\th:2.8*\r) circle (0.05*\r) 
	(-60+3*\th:2.7*\r) circle (0.05*\r)
;

\end{scope}

\end{scope}

\begin{scope}[yshift=-3.35*\s cm] 

\tikzmath{
\r=0.35;
\th=360/5;
\x=\r*cos(\th/2);
}

\begin{scope}[]

\foreach \a in {0,...,4}{

\draw[red!50, rotate=\a*\th]
	(90:\r) -- (90-\th:\r)
	(90:\r) -- (90:2*\r)
	(90:2*\r) -- (90-0.5*\th:\x+2*\r) 
	(90:2*\r) -- (90+0.5*\th:\x+2*\r)
	(90-0.5*\th:\x+2*\r) -- (90-0.5*\th:\x+3*\r)
	(90-0.5*\th:\x+3*\r) --(90+0.5*\th:\x+3*\r)
;

}

\foreach \aa in {-1, 1} {

	(90-\th:\r) -- (90-\th:2*\r)
	(90-2*\th:\r) -- (90+2*\th:\r)
	(90-0.5*\th:\x+2*\r) -- (90-0.5*\th:\x+3*\r) 
;

}

	(90:\r) -- (90:2*\r)
	(90-2*\th:2*\r) -- (90-1.5*\th:\x+2*\r)
	(90+2*\th:2*\r) -- (270:\x+2*\r) 
	(90+1.5*\th:\x+2*\r) -- (90+1.5*\th:\x+3*\r)
	(90-1.5*\th:\x+3*\r) -- (270:\x+3*\r) 
;

\end{scope}

\begin{scope}[xshift=4*\s cm]

\foreach \a in {0,...,4}{

\draw[red!50, rotate=\a*\th]
	(90:\r) -- (90-\th:\r)
	(90:\r) -- (90:2*\r)
	(90:2*\r) -- (90-0.5*\th:\x+2*\r) 
	(90:2*\r) -- (90+0.5*\th:\x+2*\r)
	(90-0.5*\th:\x+2*\r) -- (90-0.5*\th:\x+3*\r)
	(90-0.5*\th:\x+3*\r) --(90+0.5*\th:\x+3*\r)
;

}

\foreach \a in {0,...,4} {

\draw[teal!80!blue, line width=1.5, rotate=\a*\th]
	(90:\r) -- (90:2*\r)
	(90-0.5*\th:\x+2*\r) -- (90-0.5*\th:\x+3*\r)
;

}

\end{scope}

\begin{scope}[xshift=8*\s cm]

\foreach \a in {0,...,4}{

\draw[red!50, rotate=\a*\th]
	(90:\r) -- (90-\th:\r)
	(90:\r) -- (90:2*\r)
	(90:2*\r) -- (90-0.5*\th:\x+2*\r) 
	(90:2*\r) -- (90+0.5*\th:\x+2*\r)
	(90-0.5*\th:\x+2*\r) -- (90-0.5*\th:\x+3*\r)
	(90-0.5*\th:\x+3*\r) --(90+0.5*\th:\x+3*\r)
;

}

\foreach \aa in {-1, 1} {

\draw[teal!80!blue, xscale=\aa, line width=1.5]
	(90-\th:\r) -- (90-\th:2*\r)
	(90-2*\th:\r) -- (90+2*\th:\r)
	(90-0.5*\th:\x+2*\r) -- (90-0.5*\th:\x+3*\r) 
;

}

\draw[teal!80!blue, line width=1.5]
	(90:\r) -- (90:2*\r)
	(90-2*\th:2*\r) -- (90-1.5*\th:\x+2*\r)
	(90+2*\th:2*\r) -- (270:\x+2*\r) 
	(90+1.5*\th:\x+2*\r) -- (90+1.5*\th:\x+3*\r)
	(90-1.5*\th:\x+3*\r) -- (270:\x+3*\r) 
;

\end{scope}

\begin{scope}[xshift=12*\s cm]

\foreach \a in {0,...,4}{

\draw[red!50, rotate=\a*\th]
	(90:\r) -- (90-\th:\r)
	(90:\r) -- (90:2*\r)
	(90:2*\r) -- (90-0.5*\th:\x+2*\r) 
	(90:2*\r) -- (90+0.5*\th:\x+2*\r)
	(90-0.5*\th:\x+2*\r) -- (90-0.5*\th:\x+3*\r)
	(90-0.5*\th:\x+3*\r) --(90+0.5*\th:\x+3*\r)
;

}

\foreach \aa in {-1, 1} {

\draw[teal!80!blue, xscale=\aa, line width=1.5]
	(90-\th:\r) -- (90-\th:2*\r)
	(90-2*\th:\r) -- (90+2*\th:\r)
	(90-0.5*\th:\x+2*\r) -- (90-0.5*\th:\x+3*\r) 
;

}

\draw[teal!80!blue, line width=1.5]
	(90:\r) -- (90:2*\r)
	(90+2*\th:2*\r) -- (90+1.5*\th:\x+2*\r)
	(90-2*\th:2*\r) -- (270:\x+2*\r) 
	(90-1.5*\th:\x+2*\r) -- (90-1.5*\th:\x+3*\r)
	(90+1.5*\th:\x+3*\r) -- (270:\x+3*\r) 
;

\end{scope}

\end{scope}

\end{tikzpicture}
\caption{The correspondence between the dihedral tilings and the perfect matchings of the dodecahedron, \quotes{\textbf{\fontsize{25}{15}\selectfont \textcolor{teal!80!blue}{-}\textcolor{red!50}{-}\textcolor{teal!80!blue}{-}}} and \quotes{\textbf{\fontsize{25}{15}\selectfont \textcolor{red!50}{-}\textcolor{teal!80!blue}{-}\textcolor{red!50}{-}}} in the first row respectively corresponds to \quotes{\textbf{\fontsize{25}{15}\selectfont \textcolor{teal!80!blue}{-}}} and \quotes{\textbf{\fontsize{25}{15}\selectfont \textcolor{red!50}{-}}} in the second row}
\label{Fig-a5-a4-dodeca-matching}
\end{figure}

We sum up that the dihedral tilings with $\alpha\beta^2$ are the three tilings with underlying tiling of the snub dodecahedron as illustrated in the first picture of Figure \ref{Fig-a5-a4-dodeca-matching}. The underlying network with vertices at \quotes{the centre} of each triangle corresponds to the graph of the dodecahedron with each edge subdivided into three. As a quadrilateral is formed via deleting the common edge between two triangles, we highlight the adjacent pairs of such edges where the two non-adjacent end vertices of the \quotes{thick} edges have degree $3$. 

As seen in the second, third, and the fourth picture, such a pair uniquely corresponds to an edge of the dodecahedron and the collection of all such pairs form a perfect matching in the dodecahedron (see the second row). Note that an unmarked edges in the dodecahedron correspond to three consecutive edges in the network (two end vertices have degree $3$) where the middle segment is to be deleted. This means that the three tilings can be derived from the perfect matching of the dodecahedron.
\end{proof}

\begin{prop}\label{Prop-a5-a4-al2be} There is no dihedral tiling with vertex $\alpha^2\beta$.
\end{prop}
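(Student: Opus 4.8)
The plan is to argue by contradiction. Suppose $\alpha^2\beta$ is a vertex of a dihedral tiling. First I would reduce to the situation $\beta>\alpha>\gamma$: if instead $\alpha\ge\beta$, then by Lemma \ref{Lem-albe-alga} the vertex $\alpha\beta\cdots$ exists, and the remark following Proposition \ref{Prop-a5-a4-albega} gives $\alpha\beta\cdots=\alpha\beta\gamma$; but then Proposition \ref{Prop-a5-a4-albega} forces the tiling to be the prism with $\AVC\equiv\{\alpha\beta\gamma\}$, which has no $\alpha^2\beta$, a contradiction. So $\beta>\alpha>\gamma$, and combining $2\alpha+\beta=2\pi$ with $\alpha>\tfrac35\pi$ and $\beta+\gamma>\pi$ gives $\tfrac35\pi<\alpha<\tfrac23\pi$, $\tfrac23\pi<\beta<\tfrac45\pi$, and $2\alpha-\pi<\gamma<\alpha$. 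I would then show $\beta^2\gamma$ is not a vertex: together with $\alpha^2\beta$ it would force $\beta=2\pi-2\alpha$ and $\gamma=4\alpha-2\pi$, and substituting these into \eqref{Eq-cot-al-be-ga} yields an equation whose left side strictly exceeds its right side throughout $(\tfrac35\pi,\tfrac23\pi)$, so it has no admissible root.

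Next I would exploit that no vertex carries two or more $\beta$'s: since $\beta>\alpha$ forces $\alpha+2\beta>2\pi$ and $3\beta>2\pi$, the only candidate vertex with $b\ge2$ is $\beta^2\gamma$, now excluded. Hence $\beta\vert\beta\cdots$ is never a vertex, so by the equivalence recorded in Figure \ref{Fig-adj-am-a4} neither is $\gamma\vert\gamma\cdots$; thus the $\gamma$'s at every vertex are pairwise non-adjacent, that is, $c\le a+b$ in any vertex $\alpha^a\beta^b\gamma^c$. Using $a\le2$, $b\le1$ (as $b\ge2$ would force $\beta^2\gamma$), the constraint $c\le a+b$, and the vertex angle sum, a short enumeration shows that the only possible vertices are $\alpha^2\beta$, $\alpha\beta\gamma^2$ (which forces $\gamma=\tfrac12\alpha$) and $\alpha^2\gamma^2$ (which forces $\gamma=\pi-\alpha$). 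Since $\alpha<\tfrac23\pi$ the latter two cannot both occur, and since $\gamma$ must appear at some vertex while $\alpha^2\beta$ contains none, exactly one of them occurs; so $\AVC\equiv\{\alpha^2\beta,\alpha\beta\gamma^2\}$ or $\AVC\equiv\{\alpha^2\beta,\alpha^2\gamma^2\}$.

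To finish I would rule out both combinations by a single double count. In either case the angle arrangement at every vertex is forced: $\alpha^2\beta=\vert\alpha\vert\alpha\vert\beta\vert$, and, since $\gamma\vert\gamma$ cannot occur, $\alpha\beta\gamma^2=\vert\gamma\vert\alpha\vert\gamma\vert\beta\vert$ and $\alpha^2\gamma^2=\vert\gamma\vert\alpha\vert\gamma\vert\alpha\vert$. Fix a pentagon $P$ and let $k$ be the number of its edges shared with another pentagon. Each such edge has both endpoints of type $\alpha^2\beta$, since the only other possible vertex type has a single $\alpha$ (in $\alpha\beta\gamma^2$) or has its two $\alpha$'s non-adjacent (in $\alpha^2\gamma^2$), so cannot host an edge between two pentagons. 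Since every edge of a rhombus joins a $\beta$-corner to a $\gamma$-corner, each edge of $P$ shared with a rhombus has one endpoint of type $\alpha^2\beta$ (its $\beta$-end) and one of the remaining type (its $\gamma$-end); here one uses that in $\alpha\beta\gamma^2$ the pentagon-corner is flanked by the two $\gamma$-corners, and that $\alpha^2\gamma^2$ has no $\beta$. Writing $t$ for the number of corners of $P$ of type $\alpha^2\beta$, counting incidences (type-$\alpha^2\beta$ corner of $P$, pentagon-pentagon edge of $P$ at it) gives $t=2k$, while counting incidences (edge of $P$, type-$\alpha^2\beta$ endpoint) gives $2t=2k+(5-k)=k+5$. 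Hence $3k=5$, which is impossible, so no pentagon exists — a contradiction.

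The main obstacle is Step 2: the verification that \eqref{Eq-cot-al-be-ga} has no root in $(\tfrac35\pi,\tfrac23\pi)$ when $\beta=2\pi-2\alpha$ and $\gamma=4\alpha-2\pi$, which should follow from a monotonicity/sign analysis of the two sides. One must also check that the vertex enumeration in Step 3 is genuinely exhaustive. Once these are settled, the concluding double count is short.
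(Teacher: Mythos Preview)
Your proposal is correct, and the reduction to the two candidate AVCs $\{\alpha^2\beta,\alpha^2\gamma^2\}$ and $\{\alpha^2\beta,\alpha\beta\gamma^2\}$ parallels the paper's closely (the paper reaches the same two AVCs, though it rules out $\alpha^3\gamma,\alpha^3\gamma^2,\alpha^3\gamma^3$ by checking \eqref{Eq-cot-al-be-ga} directly, whereas your bound $c\le a+b$ together with $\gamma>2\alpha-\pi>\tfrac15\pi$ handles $a=3$ without that computation). Your flagged Step~2 is exactly what the paper also asserts without details, so you are not missing anything relative to the source.

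Where you genuinely differ is the endgame. The paper eliminates both AVCs by a local propagation argument: it starts from the forced arrangement $\gamma\vert\alpha\vert\gamma$ at a pentagon corner, builds tiles $T_1,\dots,T_5$ via repeated use of $\alpha\vert\beta\cdots=\alpha^2\beta$, and finds that the next tile $T_6$ would require two adjacent $\beta$'s. Your double count is a different, and arguably cleaner, global argument: once the arrangements $\alpha^2\beta=\vert\alpha\vert\alpha\vert\beta\vert$, $\alpha^2\gamma^2=\vert\alpha\vert\gamma\vert\alpha\vert\gamma\vert$, $\alpha\beta\gamma^2=\vert\gamma\vert\alpha\vert\gamma\vert\beta\vert$ are forced, every corner of a pentagon of type $\alpha^2\beta$ lies on exactly one pentagon--pentagon edge and one pentagon--rhombus edge, while every corner of the other type lies on two pentagon--rhombus edges; your two incidence counts then give $t=2k$ and $2t=k+5$, hence $3k=5$. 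This avoids any figure or tile-by-tile casework and treats both AVCs simultaneously. The paper's approach, on the other hand, is consistent with the constructive tile-determination style used throughout and reuses machinery already set up in earlier propositions.
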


\begin{proof} By Propositions \ref{Prop-a5-a4-albega}, \ref{Prop-a5-a4-al3}, \ref{Prop-a5-a4-be3}, \ref{Prop-a5-a4-albe2}, we may assume that the only degree $3$ vertices are $\alpha^2\beta, \beta^2\gamma$. Suppose both $\alpha^2\beta, \beta^2\gamma$ are vertices. By $\beta>\alpha>\gamma$ and $\alpha^2\beta$, we get $\beta>\frac{2}{3}\pi > \alpha$. Meanwhile, the two vertices imply
\begin{align*}
\beta =2\pi - 2\alpha, \quad
\gamma = 4\alpha- 2\pi.
\end{align*}
Substituting the above into \eqref{Eq-cot-al-be-ga}, one can show that there is no solution for $\alpha \in (0, \pi)$. 

So $\alpha^2\beta$ being a vertex implies that $\beta^2\gamma$ is not a vertex. Combined with $\beta>\alpha>\gamma$ and the rhombus angle sum, we have $R(\beta^2)<R(\alpha\beta)=\alpha$ and $R(\beta^2)<2\gamma$. Then $\beta^2\cdots$ is not a vertex. By no $\beta^2\cdots$, we also know that $\gamma \vert \gamma\cdots$ is not a vertex. So $\gamma^c, \alpha\gamma^c, \beta\gamma^c$ are not vertices. 

On the other hand, by $\alpha^2\beta$ and $\beta>\alpha>\gamma$, we get $\beta>\frac{2}{3}\pi>\alpha$. Then $\alpha>\frac{3}{5}\pi$ and $\beta>\frac{2}{3}\pi$ and no $\gamma^c, \alpha\gamma^c, \beta\gamma^c, \beta^2\cdots$ give the vertices 
\begin{align*}
\AVC = \{ \alpha^2\beta, \alpha^{3}\gamma^c, \alpha^{2}\gamma^c, \alpha\beta\gamma^c \}.
\end{align*}
By no $\gamma\vert\gamma\cdots$, we have $\alpha^3\gamma^c=\vert\alpha\vert\alpha\vert\alpha\vert\gamma\vert, \vert\alpha\vert\alpha\vert \gamma \vert\alpha\vert\gamma\vert, \vert\alpha\vert\gamma\vert\alpha\vert\gamma\vert\alpha\vert \gamma \vert =\alpha^3\gamma, \alpha^3\gamma^2$, $\alpha^3\gamma^3$ and $\alpha^2\gamma^c=\vert\alpha\vert\gamma\vert\alpha\vert\gamma\vert$ and $\alpha\beta\gamma^c=\vert\alpha\vert\gamma\vert\beta\vert\gamma\vert=\alpha\beta\gamma^2$. Hence the $\AVC$ is updated as follows,
\begin{align*}
\AVC = \{ \alpha^2\beta, \alpha^{3}\gamma, \alpha^{3}\gamma^2, \alpha^{3}\gamma^3, \alpha^{2}\gamma^2, \alpha\beta\gamma^2 \}.
\end{align*}
Then we have $\alpha\gamma\cdots=\alpha^{3}\gamma, \alpha^{3}\gamma^2, \alpha^{3}\gamma^3, \alpha^{2}\gamma^2, \alpha\beta\gamma^2$. Combining one of them with $\alpha^2\beta$, we get the angle formulae in terms of $\alpha$. Substituting the angle formulae into \eqref{Eq-cot-al-be-ga}, the only two vertices with solutions for $\alpha \in (\frac{3}{5}\pi, \frac{2}{3}\pi)$ are $\alpha^2\gamma^2, \alpha\beta\gamma^2$. By $\beta>\alpha>\gamma$, the vertices $\alpha^2\gamma^2, \alpha\beta\gamma^2$ are mutually exclusive. Hence we get the updated vertices below,
\begin{align*}
\AVC &= \{ \alpha^2\beta, \alpha^{2}\gamma^2  \}; \\
\AVC &= \{ \alpha^2\beta, \alpha\beta\gamma^2 \}.
\end{align*}
For both $\AVC$'s, by no $\gamma \vert \gamma\cdots$, we have $\alpha^2\gamma^2 = \vert \alpha \vert \gamma \vert \alpha \vert \gamma \vert$ and $\alpha\beta\gamma^2 = \vert \alpha \vert \gamma \vert \beta \vert \gamma \vert$. It implies $\alpha\vert \alpha \cdots, \alpha\vert\beta\cdots=\alpha^2\beta$ and both $\alpha^2\gamma^2, \alpha\beta\gamma^2$ have angle arrangement $\gamma \vert \alpha \vert \gamma$.

The angle arrangement $\gamma \vert \alpha \vert \gamma$ determines tiles $T_1, T_2, T_3$ in Figure \ref{Fig-a5-a4-AAD-al2be-al2ga2/albega2-alal/gaalga}. Then $\alpha_2\beta_1\cdots=\alpha^2\beta$ determines the angles in $T_4$. By mirror symmetry, we also get the angles in $T_5$. By $\alpha_2 \vert \alpha_4\cdots, \alpha_2 \vert \alpha_5\cdots=\alpha^2\beta$, we get two adjacent $\beta$'s in $T_6$, a contradiction. 

\begin{figure}[h!] 
\centering
\begin{tikzpicture}

\tikzmath{
\s=1;
\r=0.8;
\g=5;
\ph=360/\g;
\x=\r*cos(\ph/2);
\y=\r*sin(\ph/2);
\rr=2*\y/sqrt(2);
\h=4;
\th=360/\h;
\xx=\rr*cos(\th/2);
}

\begin{scope}[] 

\foreach \p in {0,...,4} {

\draw[rotate=\p*\ph]
	(270:\r) -- (270-1*\ph:\r) 
;

\node at (270+\p*\ph:0.7*\r) {\small $\alpha$};

}

\foreach \p in {1,...,4} {

\draw[rotate=\p*\ph]
	(270:\r) -- (270:1.75*\r)
;

}

\draw[]
	(270:\r) -- ([shift={(270:\r)}]270-0.5*\ph: 0.75*\r)
	(270:\r) -- ([shift={(270:\r)}]270+0.5*\ph: 0.75*\r)
;

\foreach \a in {-1, 1} {

\tikzset{xscale=\a}

\node at (1*\r, -0.6*\r) {\small $\beta$};
\node at (0.3*\r, -1.1*\r) {\small $\gamma$};

\node at (1.125*\r, -0.15*\r) {\small $\alpha$};
\node at (0.85*\r, 0.75*\r) {\small $\alpha$};

\node at (0.5*\r, 1*\r) {\small $\beta$};

}

\node[inner sep=1,draw,shape=circle] at (0,0) {\small $2$};
\node[inner sep=1,draw,shape=circle] at (270-0.6*\ph:1.5*\r) {\small $1$};
\node[inner sep=1,draw,shape=circle] at (270+0.6*\ph:1.5*\r) {\small $3$};
\node[inner sep=1,draw,shape=circle] at (90+1*\ph:1.5*\r) {\small $4$};
\node[inner sep=1,draw,shape=circle] at (90-1*\ph:1.5*\r) {\small $5$};
\node[inner sep=1,draw,shape=circle] at (90:1.5*\r) {\small $6$};

\end{scope} 

\end{tikzpicture}
\caption{The angle arrangement $ \gamma \vert \alpha\vert \gamma $}
\label{Fig-a5-a4-AAD-al2be-al2ga2/albega2-alal/gaalga}
\end{figure}
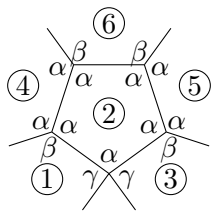



Hence there is no dihedral tiling for $\AVC = \{ \alpha^2\beta, \alpha^2\gamma^2 \}, \{ \alpha^2\beta, \alpha\beta\gamma^2 \}$.
\end{proof}

\begin{prop}\label{Prop-a5-a4-be2ga} The infinite family of dihedral tilings with vertex $\beta^2\gamma$ is in Figure \ref{Fig-a5-a4-Tilings-be2ga-albegac}.
\end{prop}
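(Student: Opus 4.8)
The plan is to first pin down the angles, then the $\AVC$, then the combinatorial structure, and finally the geometric realisation. Since Propositions \ref{Prop-a5-a4-albega}, \ref{Prop-a5-a4-al2ga}, \ref{Prop-a5-a4-al3}, \ref{Prop-a5-a4-be3}, \ref{Prop-a5-a4-albe2} and \ref{Prop-a5-a4-al2be} have already disposed of every other degree $3$ vertex, I may assume $\alpha\beta\gamma, \alpha^2\gamma, \alpha^3, \beta^3, \alpha\beta^2, \alpha^2\beta$ are not vertices, so that $\beta^2\gamma$ is the only degree $3$ vertex (one exists by Lemma \ref{Lem-deg3}). From $\beta^2\gamma$ I get $2\beta+\gamma=2\pi$, and with $\beta>\alpha>\gamma$, $\alpha>\tfrac35\pi$, $\beta+\gamma>\pi$ this gives $\tfrac23\pi<\beta<\pi$ and $0<\gamma<\tfrac23\pi$. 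Since $R(\beta^2)=\gamma$ is the smallest angle, $\beta^2\cdots=\beta^2\gamma$. Lemma \ref{Lem-albe-alga} provides a vertex $\alpha\beta\cdots$; a short angle-sum check (using the bounds above to kill $\alpha^2\beta\cdots$, $\alpha\beta^2\cdots$, $\alpha^2\beta^2$, $\alpha^2\beta\gamma$, and so on, via \eqref{Eq-al+be+ga<=2pi}) forces $\alpha\beta\cdots=\alpha\beta\gamma^c$ for some $c\ge2$, and comparing vertex angle sums via \eqref{rational-VertexAngSum} shows this $c$ is the same at every such vertex, so $\alpha=\pi-(c-\tfrac12)\gamma$.

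Next I would determine the $\AVC$. Substituting $\beta=\pi-\tfrac12\gamma$ and $\alpha=\pi-(c-\tfrac12)\gamma$ into \eqref{Eq-cot-al-be-ga} with $m=5$ turns the cotangent identity into one transcendental equation in $\gamma$, which for each fixed $c\ge2$ has a unique admissible solution; this fixes all angle values as functions of $c$. Feeding these values back into \eqref{rational-VertexAngSum} leaves only finitely many candidate vertices, and after discarding those barred by positivity of $\alpha,\beta,\gamma$ and by \eqref{Eq-al+be+ga<=2pi}, one is left with $\beta^2\gamma$, $\alpha\beta\gamma^c$ and a few parasitic candidates such as $\beta\gamma^k$, $\gamma^k$ and $\alpha^2\gamma^{2c-1}$ whose angle sums happen to work out. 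These I would eliminate by adjacency: $\alpha\gamma\cdots$ is a vertex by Lemma \ref{Lem-albe-alga}, and since $\alpha$ can occur only in $\alpha\beta\gamma^c$ this forces $\alpha\beta\cdots=\alpha\gamma\cdots=\alpha\beta\gamma^c$; then using the $\alpha\vert\beta \leftrightarrow \alpha\vert\gamma$ and $\beta\vert\beta \leftrightarrow \gamma\vert\gamma$ exchange rules of Figure \ref{Fig-adj-am-a4} one shows a $\beta\gamma^k$ or $\alpha^2\gamma^{2c-1}$ vertex would force two adjacent $\beta$'s (or a forbidden $\alpha$-configuration) at a neighbouring vertex. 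The outcome is $\AVC\equiv\{\beta^2\gamma,\alpha\beta\gamma^c\}$.

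With the $\AVC$ in hand I would reconstruct the tiling. Every vertex of every pentagon is $\alpha\beta\gamma^c$, so around a pentagon the five flanking rhombi each contribute exactly one $\beta$, and a standard angle arrangement deduction pins down a ring of five rhombi; then, using $\beta^2\gamma$ together with the runs of $\gamma$'s in $\alpha\beta\gamma^c$, the rhombi are forced to continue as four latitudinal strips, each a copy of the $(2c-1)$-rhombus strip $\mathcal{T}$ with $\gamma^c$ at one end and $\gamma^{c-1}$ at the other, which close up on a second pentagon at the antipode, giving exactly the tiling of Figure \ref{Fig-a5-a4-Tilings-be2ga-albegac} with $f=8c-2$. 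For existence I would then exhibit this tiling directly, as four copies of $\mathcal{T}$ glued between two regular pentagons (Figure \ref{Fig-a5-a4-EMTs}), with $\mathcal{T}$ obtained from the monohedral tiling $E_{\square}4$ of \cite{cly}; equivalently one can run a lune-based geometric realisation as in the proof of Proposition \ref{Prop-a5-a4-albega}, solving for the common latitude of the two pentagons. The hard part will be the combinatorial propagation: ruling out all branchings so that the ring of five rhombi is forced to grow into the four rigid strips, and simultaneously excluding the parasitic vertices whose angle sums are consistent — that is where the bulk of the adjacency casework lives.
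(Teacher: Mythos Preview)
Your outline is broadly right, but the treatment of the ``parasitic'' vertices is where it diverges from the paper and where it has a real gap.

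First, your sentence ``since $\alpha$ can occur only in $\alpha\beta\gamma^c$ this forces $\alpha\beta\cdots=\alpha\gamma\cdots=\alpha\beta\gamma^c$'' is circular: that $\alpha$ occurs only in $\alpha\beta\gamma^c$ is exactly what you are trying to prove at that stage. The paper's device here is the propagation argument of Figure~\ref{Fig-a5-a4-AAD-albegac-alga}: from any $\alpha\vert\gamma$, the fact $\alpha\beta\cdots=\alpha\beta\gamma^c$ forces the rhombus on the other side of the pentagon edge, and walking around the pentagon gives $\alpha\vert\gamma\cdots=\beta\vert\alpha\vert\gamma\cdots$. This immediately kills every $\alpha^a\gamma^c$ (not just $\alpha^2\gamma^{2c-1}$), with no appeal to the final $\AVC$.

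Second, you list $\gamma^k$ among the parasites but your adjacency sketch only addresses $\beta\gamma^k$ and $\alpha^2\gamma^{2c-1}$. The paper handles $\gamma^k$ by a different and rather clean observation: if $\gamma^k$ is a vertex, then each $\gamma\vert\gamma$ there forces, via $\beta^2\cdots=\beta^2\gamma$, a full ``timezone'' of rhombi (Figure~\ref{Fig-a4-be2-gac-Timezones}), and iterating around $\gamma^k$ closes up into a \emph{monohedral} earth map tiling with no pentagon at all. So $\gamma^k$ is excluded not by an angle or adjacency contradiction but because it forces monohedrality.

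Third, the paper does \emph{not} eliminate $\beta\gamma^c$ from the $\AVC$ before constructing. It works with $\AVC=\{\beta^2\gamma,\beta\gamma^c,\alpha\beta\gamma^c\}$, notes that $\alpha$ must appear somewhere so $\alpha\beta\gamma^c$ is a vertex, and then shows the tiling is \emph{forced} outward from a pentagon (five $\alpha\beta\gamma^c$'s, then the $\gamma^c$-tails build the $\mathcal{T}$-blocks, then the second pentagon). That $\beta\gamma^c$ never appears is a consequence of the construction, not a precondition. Your plan to kill $\beta\gamma^k$ by an adjacency contradiction may well work, but it is extra labour the paper avoids.

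For the geometric realisation, the paper does not reuse the lune argument of Proposition~\ref{Prop-a5-a4-albega}. Instead it writes $\beta=\pi-\tfrac12\gamma$, $\alpha=\pi-(c-\tfrac12)\gamma$, solves \eqref{Eq-cot-al-be-ga} for $\alpha(\gamma)$, and shows $c(\gamma)=\tfrac1\gamma(\pi-\alpha(\gamma))+\tfrac12$ is continuous and strictly decreasing on $(0,\tfrac25\pi)$ with $c(\tfrac25\pi)=1$ and $c(\gamma)\to+\infty$ as $\gamma\to0^+$; the Intermediate Value Theorem then gives a unique $\gamma_c$ for every integer $c\ge2$.
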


Each member of the infinite family has $2$ pentagons and $4(2c-1)$ rhombi, where $c\ge2$ is the number of $\gamma$'s in the vertex $\alpha\beta\gamma^c$. They are illustrated in Figure \ref{Fig-a5-a4-EMTs}.

\begin{proof} By Proposition \ref{Prop-a5-a4-al2be}, we know that $\alpha^2\beta$ is not a vertex. As $\gamma$ is the smallest angle, by $\beta^2\gamma$ we get $\beta^2\cdots=\beta^2\gamma$ and $\beta>\frac{2}{3}\pi$. By $\alpha>\frac{3}{5}\pi$ and $\beta>\frac{2}{3}\pi$ and $\beta>\alpha>\gamma$, we get all the vertices below,
\begin{align}\label{Eq-AVC-be2ga-full}
\AVC = \{ \beta^2\gamma, \gamma^c, \alpha^{a\le3}\gamma^c, \beta\gamma^c, \alpha\beta\gamma^c \}.
\end{align}
From the above, we know that $\alpha^2\beta\cdots$ is not a vertex and $\alpha\beta\cdots = \alpha\beta\gamma^c$. Lemma \ref{Lem-albe-alga} implies that $\alpha\beta\gamma^c$ is a vertex.

We will first show that $\gamma^c, \alpha^a\gamma^c$ are not vertices for dihedral tilings. 

By $\alpha\beta\cdots=\alpha\beta\gamma^c$, the same deduction on $\alpha\vert\gamma\cdots$ in Figure \ref{Fig-a5-a4-AAD-albegac-alga} determines $\alpha\vert\gamma\cdots = \beta \vert \alpha \vert \gamma\cdots =\alpha\beta\gamma^c$. By $\alpha\vert\gamma\cdots = \beta \vert \alpha \vert \gamma\cdots$, we further know that $\alpha^a\gamma^c$ is not a vertex.

If $\gamma^c$ is a vertex, then by $\beta^2\cdots=\beta^2\gamma$, the angle arrangement $\gamma\vert\gamma$ determines three tiles as illustrated in Figure \ref{Fig-a4-be2-gac-Timezones}. This process continues at $\gamma^c$ and we determine a monohedral earth map tiling for each fixed $c\ge3$. Further details about the earth map tilings can be seen in \cite{cly}. Hence $\gamma^c$ is not a vertex for dihedral tilings.

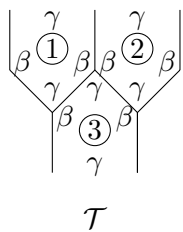
\begin{figure}[h!]
\centering
\begin{tikzpicture}[>=latex,scale=1]

\tikzmath{ 
\s=1;
\r=0.8;
\x = sqrt(2*\r^2);
\y = sqrt(\r^2 - (\x/2)^2);
\yy = 2*\r + \y;
\l=2*\r*sin(67.5);
\L=sqrt(\r^2+\l^2-2*\r*\l*cos(135+22.5));
\A=acos((\L^2+\l^2-\r^2)/(2*\L*\l));
}

\begin{scope}[xshift=0 cm] 

	(\x,0) -- (\x,\r) -- (90-22.5:\l) -- (90-22.5+\A:\L) -- (-0.5*\x, 2*\r+\y) -- (-0.5*\x, \r+\y) -- (0*\x, \r) -- (0,0) -- (\x, 0)
;

\tikzmath{ 
\tz=1;
\tzz=\tz-1;
\tzzz=\tzz-1;
}

\draw[]
	(90:\r) -- (90+22.5:\l)
	(90+22.5:\l) -- (90+22.5-\A:\L)
	(1*\x,0) -- (1*\x, \r)
;

\foreach \a in {0,...,\tz} {

\draw[xshift=\x*\a cm]
	(0:0) -- (90:\r)
	(90:\r) -- (90-22.5:\l)
	(90-22.5:\l) -- (90-22.5+\A:\L)
;
	
}

\foreach \a in {0,...,\tzz} {

\draw[xshift=\x*\a cm]
	(90-22.5:\l) -- (\x,\r)
;

}

\foreach \c in {-1,0,...,\tzz} {

\node at (1*\x+\c*\x, 2*\r + 0.75*\y) {\small $\gamma$}; 
\node at (1*\x+\c*\x, 1*\r + 0.5*\y) {\small $\gamma$}; 
\node at (0.65*\x+\c*\x, 1.1*\r + 1*\y) {\small $\beta$}; 
\node at (1.35*\x+\c*\x, 1.1*\r + 1*\y) {\small $\beta$}; 

}

\foreach \c in {0,...,\tzz} {

\node at (0.5*\x+\c*\x, 0.1*\r) {\small $\gamma$}; 
\node at (0.5*\x+\c*\x, 1*\r + 0.5*\y) {\small $\gamma$}; 
\node at (0.15*\x+\c*\x, 0.9*\r) {\small $\beta$}; 
\node at (0.85*\x+\c*\x, 0.9*\r) {\small $\beta$}; 

}

\node at (\y,-0.75*\r) {\small $\mathcal{T}$};

\node[inner sep=1,draw,shape=circle] at (0,\r+0.75*\x) {\small $1$};
\node[inner sep=1,draw,shape=circle] at (\x,\r+0.75*\x) {\small $2$};
\node[inner sep=1,draw,shape=circle] at (0.5*\x,0.5*\x) {\small $3$};

\end{scope} 

\end{tikzpicture}
\caption{A timezone given by $T_1, T_3$ and $\mathcal{T}=c-\frac{1}{2}$ timezones, $c=2$}
\label{Fig-a4-be2-gac-Timezones}
\end{figure}

For $c\ge2$, we use $\mathcal{T}$ to denote a block of $c-\frac{1}{2}$ timezones which consist of $c-1$ timezones and one extra tile. Examples with $c=2,3$ are illustrated in Figure \ref{Fig-a4-be2-gac-Timezones} and the first picture of Figure \ref{Fig-a5-a4-EMTs} respectively.

By no $\gamma^c, \alpha^a\gamma^c$, we get
\begin{align}\label{Eq-AVC-be2ga-begac-albegac}
\AVC = \{ \beta^2\gamma, \beta\gamma^c, \alpha\beta\gamma^c \}.
\end{align} 
Since $\alpha$ appears at some vertex, we know that $\alpha\beta\gamma^c$ is a vertex. By the deduction in Figure \ref{Fig-a5-a4-AAD-albegac-alga}, we get the central square in Figure \ref{Fig-a5-a4-Tilings-be2ga-albegac}. For any fixed $c\ge2$, we can fill the remaining $\gamma$'s of $\alpha\beta\gamma^c$ at the vertices $\alpha_1\beta_2\gamma_3\cdots, \alpha_1\beta_3\gamma_4\cdots$, $\alpha_1\beta_4\gamma_5\cdots, \alpha_1\beta_5\gamma_6\cdots$, $\alpha_1\beta_6\gamma_2\cdots$ of Figure \ref{Fig-a5-a4-AAD-albegac-alga}. Then the $\gamma^c$-part of each $\alpha\beta\gamma^c$ determine $\mathcal{T}$ in Figure \ref{Fig-a5-a4-Tilings-be2ga-albegac}. The tiling is then completed by another square in the exterior of the \quotes{circular boundary} in Figure \ref{Fig-a5-a4-Tilings-be2ga-albegac}.

\begin{figure}[h!] 
\centering
\begin{tikzpicture}

\tikzmath{
\s=1;
\r=1;
\g=5;
\ph=360/\g;
\x=\r*cos(\ph/2);
\y=\r*sin(\ph/2);
\rr=2*\y/sqrt(2);
\h=4;
\th=360/\h;
\xx=\rr*cos(\th/2);
\R=sqrt(2*\r^2 - 2*\r^2*cos(180-\ph/2));
\bR = \ph/4;
\l = \r*cos(\ph/2);
}

\begin{scope}

\foreach \p in {0,...,4} {

\draw[rotate=\p*\ph]
	(90-1*\ph:\r) -- (90:\r)
	(90:\r) -- (90+\bR:\R)
	(90+\bR:\R) -- (90:\r+2*\l)
	(90+\bR+2*\ph:\R) -- ([shift={(90+\bR+2*\ph:\R)}]0:\r/2)
	(90:\r+2*\l) -- ([shift={(90:\r+2*\l)}]270+\ph/2:\r/2)
;

\node at (90+\p*\ph:0.7*\r) {\small $\alpha$};

\node at (90-0.1*\ph+\p*\ph:1.2*\r) {\small $\gamma^c$};
\node at (90-0.775*\ph+\p*\ph:1.1*\r) {\small $\beta$};
\node at (90+0.6*\bR+\p*\ph:0.975*\R) {\small $\beta$};
\node at (90-0.7*\ph+\p*\ph:0.9*\R) {\small $\gamma$};

\node at (90+1.15*\bR+\p*\ph:1.08*\R) {\small $\beta$};

\node at (90+\p*\ph: \r+1.65*\l) {\small $\gamma$};
\node at (90-0.125*\ph+\p*\ph: \r+1.78*\l) {\small $\beta$};

\node[rotate=\ph/4+\p*\ph] at (90+0.15*\ph+\p*\ph:\r+1.75*\l) {\footnotesize $\gamma^{c-1}$};

\node at (90-0.325*\ph+\p*\ph:1.75*\r) {\small $\mathcal{T}$};

}

\draw (0,0) circle (\r+2*\l);

\end{scope}

\begin{scope}[xshift=7*\s cm]

\foreach \p in {0,...,4} {

\draw[rotate=\p*\ph]
	(90-1*\ph:\r) -- (90:\r)
	(90:\r) -- (90+\bR:\R)
	(90+\bR:\R) -- (90:\r+2*\l)
	(90:\r) -- (90-\bR:\R)
	(90-\bR:\R) -- (90:\r+2*\l)
	(90+\bR:\R) -- (90-\bR+\ph:\R) 
;

\node at (90+\p*\ph:0.75*\r) {\small $\alpha$};

\node at (90+\p*\ph:1.25*\r) {\small $\gamma$};
\node at (90+\p*\ph:0.75*\r+2*\l) {\small $\gamma$};
\node at (90-0.15*\ph+\p*\ph:0.95*\R) {\small $\beta$};
\node at (90+0.15*\ph+\p*\ph:0.975*\R) {\small $\beta$};

\node at (90-0.175*\ph+\p*\ph:1.05*\r) {\small $\gamma$};
\node at (90+0.175*\ph+\p*\ph:1.05*\r) {\small $\beta$};
\node at (90-1.2*\bR+\p*\ph:0.88*\R) {\small $\beta$};
\node at (90+1.2*\bR+\p*\ph:0.88*\R) {\small $\gamma$};

\node at (90-\bR+\p*\ph:1.1*\R) {\small $\gamma$};
\node at (90+1.1*\bR+\p*\ph:1.1*\R) {\small $\beta$};
\node at (90-0.12*\ph+\p*\ph:0.85*\r+1.95*\l) {\small $\beta$};
\node at (90+0.1*\ph+\p*\ph:0.85*\r+2*\l) {\small $\gamma$};

\node at (90+\p*\ph:1.2*\r+2*\l) {\small $\alpha$};

}

\draw (0,0) circle (\r+2*\l);

\node[inner sep=1,draw,shape=circle] at (0,0) {\small $1$};
\node[inner sep=1,draw,shape=circle] at (90-0.5*\ph:\x+\r/2) {\small $2$};
\node[inner sep=1,draw,shape=circle] at (90+3.5*\ph:\x+\r/2) {\small $3$};
\node[inner sep=1,draw,shape=circle] at (90+2.5*\ph:\x+\r/2) {\small $4$};
\node[inner sep=1,draw,shape=circle] at (90+1.5*\ph:\x+\r/2) {\small $5$};
\node[inner sep=1,draw,shape=circle] at (90+0.5*\ph:\x+\r/2) {\small $6$};

\end{scope}

\end{tikzpicture}
\caption{The infinite family of tilings with $\beta^2\gamma, \alpha\beta\gamma^c $}
\label{Fig-a5-a4-Tilings-be2ga-albegac}
\end{figure}

For $c=2$, we illustrate the minimal member of the family in the second picture of Figure \ref{Fig-a5-a4-Tilings-be2ga-albegac}. 

\subsubsection*{Geometric Realisation}

Lastly, we show the geometric existence of these tilings. By $\beta^2\gamma, \alpha\beta\gamma^c$, we have
\begin{align} \label{Eq-be2ga-be-al}
\alpha = \pi - (c-\tfrac{1}{2})\gamma, \quad
\beta= \pi - \tfrac{1}{2}\gamma.
\end{align}
The geometric existence of the tilings means $\pi > \beta > \alpha > \frac{3}{5}\pi > \gamma$ and \eqref{Eq-cot-al-be-ga} is satisfied and $0 < \cos x < 1$. By the second identity of \eqref{Eq-be2ga-be-al}, the right hand side of \eqref{Eq-cot-al-be-ga} gives $\cos x = \frac{1}{2} (1 - \tan^2 \frac{1}{4}\gamma) \in (0, 1)$ for $\gamma \in (0, \pi)$. It suffices to check the first two conditions.

By $\gamma > 0$ and $c\ge2$ and \eqref{Eq-be2ga-be-al}, we have $\beta>\alpha$. Meanwhile, for $\alpha=\alpha(\gamma) \in (0, \pi)$, by the second equation in \eqref{Eq-be2ga-be-al} and \eqref{Eq-cot-al-be-ga} we get 
\begin{align*}
\alpha(\gamma) = 2 \sin^{-1} \frac{2 \cos \frac{1}{5}\pi}{ ( 3 -\tan^2 \frac{1}{4}\gamma )^{\frac{1}{2}} }.
\end{align*}
Then by the first equation of \eqref{Eq-be2ga-be-al}, we get
\begin{align} \label{Eq-be2ga-albegac-c(ga)}
c=c(\gamma) = \tfrac{1}{\gamma}( \pi - \alpha(\gamma) ) + \tfrac{1}{2}.
\end{align}
For $\alpha > \frac{3}{5}\pi$ and $c\ge2$, the first equation of \eqref{Eq-be2ga-be-al} also implies $\gamma \in (0, \frac{2}{5}\pi)$. The inequality $\alpha(\gamma)>\frac{3}{5}\pi$ is equivalent to 
\begin{align*}
\sin^{-1} \frac{2 \cos \frac{1}{5}\pi}{ (3 -\tan^2 \frac{1}{4}\gamma)^{\frac{1}{2}} }>\tfrac{3}{10}\pi. 
\end{align*}
As $\sin \theta$ is strictly increasing on $(0,\frac{1}{2}\pi)$ and $\cos \frac{1}{5}\pi = \sin \frac{3}{10}\pi = \frac{1}{4}(1+\sqrt{5})$, it is equivalent to $2 >( 3 -\tan^2 \frac{1}{4}\gamma )^{\frac{1}{2}}$, which is true for $\gamma \in (0, \pi)$. Hence we always have $\alpha(\gamma)>\gamma$. 

It remains to show that, for any integer $c\ge2$, there is a $\gamma_c \in (0, \frac{2}{5}\pi)$ satisfying $c(\gamma_c)=c$. One can show that \eqref{Eq-be2ga-albegac-c(ga)} is continuous and decreasing on $(0, \frac{2}{5}\pi)$ and $c(\frac{2}{5}\pi) =1 < 2$ and 
\begin{align*}
\lim_{\gamma \to 0^+} c(\gamma)= + \infty.
\end{align*}
Then Intermediate Value Theorem implies that the desired $\gamma_c$ exists and is unique.

Therefore we conclude that the infinite family of tilings in Figure \ref{Fig-a5-a4-Tilings-be2ga-albegac} exist.
\end{proof}

\section{Tilings by Rhombi and $m$-gons with $m\ge6$} \label{Sec-a6+-a4-tilings}

\begin{prop}\label{Prop-am-a4-be2ga} There is no dihedral tiling with angle sum $2\beta+\gamma=2\pi$ for $m\ge6$.
\end{prop}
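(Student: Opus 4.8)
The plan is to obtain a contradiction directly from the two trigonometric constraints recorded in Section~\ref{Sec-basic} — the rhombus relation and the spherical cosine law on the $m$-gon, combined in \eqref{Eq-cot-al-be-ga} — so that no vertex or combinatorial analysis is needed at all. Suppose, for contradiction, that a dihedral tiling by a regular $m$-gon with $m\ge6$ and a rhombus with $2\beta+\gamma=2\pi$ exists. First I would extract what the hypothesis says about the rhombus: from $2\beta+\gamma=2\pi$ together with the standing assumption $\beta>\gamma$ we get $3\beta>2\pi$, and from $\gamma>0$ we get $2\beta<2\pi$; hence $\beta\in(\tfrac23\pi,\pi)$, so $\cos\beta<-\tfrac12<0$ (in particular $-\cos\beta<1$), and $\tfrac12\gamma=\pi-\beta\in(0,\tfrac\pi3)$.

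Then I would evaluate the two sides of \eqref{Eq-cot-al-be-ga}. On the right, using $\cot\tfrac12\gamma=\cot(\pi-\beta)=-\cot\beta$ and the half-angle identity $\cot\tfrac12\beta\cdot\cot\beta=\dfrac{\cos\beta}{1-\cos\beta}$, one gets
\[
\cot\tfrac12\beta\,\cot\tfrac12\gamma \;=\; -\,\frac{\cos\beta}{1-\cos\beta},
\]
which is positive (since $\cos\beta<0<1-\cos\beta$) and, because $-\cos\beta<1$, satisfies $-\dfrac{\cos\beta}{1-\cos\beta}<\tfrac12$. On the left, writing $\csc^2\tfrac12\alpha=1+\cot^2\tfrac12\alpha$,
\[
\cot^2\tfrac12\alpha+\cos\tfrac2m\pi\,\csc^2\tfrac12\alpha \;=\; (1+\cos\tfrac2m\pi)\cot^2\tfrac12\alpha+\cos\tfrac2m\pi \;\ge\; \cos\tfrac2m\pi,
\]
and for $m\ge6$ we have $\cos\tfrac2m\pi\ge\cos\tfrac\pi3=\tfrac12$. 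Hence the left side of \eqref{Eq-cot-al-be-ga} is $\ge\tfrac12$ while the right side is $<\tfrac12$, which is impossible; this contradiction finishes the proof.

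I do not expect a genuine obstacle here: the whole argument is two one-line estimates pivoting on the value $\tfrac12$. The only point meriting a moment's care is the borderline case $m=6$, where $\cos\tfrac2m\pi=\tfrac12$ exactly — but this is still fine, since the left side is $\ge\tfrac12$ while the right side is \emph{strictly} below $\tfrac12$. (If one preferred a route mirroring the earlier propositions, one could instead note that $\alpha\beta\cdots$ is a vertex by Lemma~\ref{Lem-albe-alga}, deduce it must be $\alpha\beta\gamma^{c}$ for some $c\ge1$ because $\alpha,\beta>\tfrac23\pi$, solve $\alpha=\pi-(2c-1)(\pi-\beta)$ from the vertex angle sum, and substitute into \eqref{Eq-cot-al-be-ga}; the resulting identity again forces a square of a cosine to exceed $1$. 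The direct estimate above is shorter and uses strictly less.)
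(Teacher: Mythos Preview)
Your proof is correct and follows essentially the same strategy as the paper: both bound the common value $\cos x$ from above by $\tfrac12$ via the rhombus relation and from below by $\cos\tfrac{2}{m}\pi\ge\tfrac12$ via the $m$-gon, reaching the same contradiction at the pivot value $\tfrac12$. The one difference is how the $m$-gon bound is justified: the paper invokes the separate geometric fact that the edge length satisfies $x<\tfrac{2}{m}\pi$ (since the perimeter of a convex spherical $m$-gon is at most the great circle), whereas you extract the same inequality purely algebraically from the left side of \eqref{Eq-cot-al-be-ga} by writing it as $(1+\cos\tfrac{2}{m}\pi)\cot^2\tfrac12\alpha+\cos\tfrac{2}{m}\pi\ge\cos\tfrac{2}{m}\pi$. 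Your route is self-contained within \eqref{Eq-cot-al-be-ga} and avoids the extra geometric input; the paper's route makes the underlying reason (edge-length bound) more visible. Both are equally short.
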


\begin{proof} By $2\beta+\gamma=2\pi$, we get
\begin{align*}
\beta = \pi - \tfrac{1}{2}\gamma.
\end{align*} 
Substituting the above into \eqref{Eq-cot-al-be-ga}, we get
\begin{align} \label{Eq-am-a4-be2ga-cosx}
\cos x = \tfrac{1}{2}(1 - \tan^2 \tfrac{1}{4}\gamma).
\end{align}

An $m$-gon with maximum perimeter is the circumference. By $\alpha<\pi$ and $m \ge 6$, this implies $x < \frac{2}{m}\pi \le \frac{1}{3}\pi$. Since $\cos \theta$ is strictly decreasing on $(0, \frac{1}{2}\pi)$, we further get $\cos x > \cos \frac{2}{m}\pi$. Combined with \eqref{Eq-am-a4-be2ga-cosx}, we get
\begin{align*}
\tfrac{1}{2} > \tfrac{1}{2}(1 - \tan^2 \tfrac{1}{4}\gamma) > \cos \tfrac{2}{m}\pi,
\end{align*} 
which implies $m<6$, a contradiction. Hence there is no tiling.
\end{proof}

\begin{prop} There is no dihedral tiling with vertex $\alpha^2\gamma$ for $m\ge6$.
\end{prop}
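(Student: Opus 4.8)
The plan is to argue by contradiction, mirroring the proof of Proposition \ref{Prop-a5-a4-al2ga}. Suppose $\alpha^2\gamma$ is a vertex, so $2\alpha+\gamma=2\pi$. Since $m\ge 6$ forces $\alpha>(1-\tfrac{2}{m})\pi\ge\tfrac{2}{3}\pi$, this is consistent only with $\gamma<\tfrac{2}{3}\pi<\alpha$, in line with $\gamma$ being the smallest angle. By Lemma \ref{Lem-albe-alga}, $\alpha\beta\cdots$ is a vertex, and I would split into the two cases $\alpha\ge\beta$ and $\alpha<\beta$.

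In the case $\alpha<\beta$, combining $\beta>\alpha$ with $2\alpha+\gamma=2\pi$ gives $\alpha+\beta+\gamma>2\alpha+\gamma=2\pi$, contradicting \eqref{Eq-al+be+ga<=2pi}. In the case $\alpha\ge\beta$, the remark following Proposition \ref{Prop-a5-a4-albega} (which uses only Lemma \ref{Lem-albe-alga}, $\beta+\gamma>\pi$, and $\alpha,\beta>\gamma$) shows $\alpha\beta\cdots=\alpha\beta\gamma$, so $\alpha+\beta+\gamma=2\pi$; subtracting $2\alpha+\gamma=2\pi$ yields $\beta=\alpha$, hence $2\beta+\gamma=2\pi$, which is impossible for $m\ge 6$ by Proposition \ref{Prop-am-a4-be2ga}. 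Either way we reach a contradiction, so $\alpha^2\gamma$ is not a vertex.

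I do not anticipate a genuine obstacle: the argument is essentially the $m=5$ argument of Proposition \ref{Prop-a5-a4-al2ga}, the only subtlety being that the classification of tilings with vertex $\alpha\beta\gamma$ for $m\ge 6$ (Proposition \ref{Prop-am-a4-albega}) is not yet available at this point, so the borderline configuration $\alpha=\beta$ cannot be discarded by appealing to it directly and must instead be routed through the already-established Proposition \ref{Prop-am-a4-be2ga}. Should one prefer a self-contained treatment of that borderline case, one can substitute $\beta=\alpha=\pi-\tfrac12\gamma$ into \eqref{Eq-cot-al-be-ga}: with $t=\cot^2\tfrac12\alpha\ge 0$ this reduces to $t\,(3+2\cos\tfrac{2}{m}\pi)=1-2\cos\tfrac{2}{m}\pi$, whose right-hand side is $\le 0$ for $m\ge 6$ while the left-hand side is $\ge 0$ and vanishes only at the impossible value $\alpha=\pi$ — again a contradiction.
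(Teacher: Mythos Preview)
Your proof is correct and reaches the same endpoint as the paper---routing the contradiction through Proposition \ref{Prop-am-a4-be2ga} via $\alpha=\beta$---but the paper's argument is more direct and avoids your case split entirely. The paper observes that $\alpha^2\gamma$ already forces $\alpha+\gamma>\pi$ (since $\alpha<\pi$ and $2\alpha+\gamma=2\pi$); together with $\beta+\gamma>\pi$ this gives $R(\alpha\beta)=2\pi-\alpha-\beta<2\gamma$ immediately, without assuming anything about the relative size of $\alpha$ and $\beta$. Since $\alpha,\beta>\gamma$, this pins down $\alpha\beta\cdots=\alpha\beta\gamma$, and then $2\alpha+\gamma=\alpha+\beta+\gamma$ yields $\alpha=\beta$, invoking Proposition \ref{Prop-am-a4-be2ga}. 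Your route handles the case $\alpha<\beta$ separately via \eqref{Eq-al+be+ga<=2pi}, which works fine but is unnecessary once one notices $\alpha+\gamma>\pi$. Your alternative self-contained treatment of the borderline $\alpha=\beta$ case is a nice touch, though the paper is content to cite Proposition \ref{Prop-am-a4-be2ga}.
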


\begin{proof} By $\alpha^2\gamma$, we have $\alpha<\pi$ and $\alpha+\gamma>\pi$. Then $\alpha+\gamma,\beta+\gamma>\pi$ implies $R(\alpha\beta)<2\gamma$. By $\alpha,\beta>\gamma$, we get $\alpha\beta\cdots=\alpha\beta\gamma$. Lemma \ref{Lem-albe-alga} implies that $\alpha\beta\cdots=\alpha\beta\gamma$ is a vertex. Then $\alpha^2\gamma, \alpha\beta\gamma$ implies $\alpha=\beta$. Hence we have $2\beta+\gamma$. Therefore Proposition \ref{Prop-am-a4-be2ga} implies that there is no dihedral tiling.
\end{proof}

\begin{prop}\label{Prop-am-a4-albega} The dihedral tilings with vertex $\alpha\beta\gamma$ and $m\ge6$ are in Figure \ref{Fig-a5-a4-Tilings-albega}.
\end{prop}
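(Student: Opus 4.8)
The plan is to run the proof of Proposition~\ref{Prop-a5-a4-albega} again, cashing in the simplifications peculiar to $m\ge6$: by the degree-$3$ classification \eqref{Eq-am-a4-deg3-list} none of $\alpha^3,\beta^3,\alpha^2\beta,\alpha\beta^2$ is a vertex, so the long ``monohedral dodecahedron'' detour of the pentagon case simply disappears. From the hypothesis I have $\alpha+\beta+\gamma=2\pi$, and I would combine this with $\alpha>(1-\tfrac2m)\pi\ge\tfrac23\pi$, $\alpha<\pi$, $\beta+\gamma>\pi$ and $\beta>\gamma$.

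\textbf{Step 1: $\AVC=\{\alpha\beta\gamma\}$.} Subtracting $\alpha+\beta+\gamma=2\pi$ from the vertex equation of a candidate $\alpha^{a}\beta^{b}\gamma^{c}$ gives $(a-1)\alpha+(b-1)\beta+(c-1)\gamma=0$. I would first use this (with $\beta>\gamma$ and $\alpha<\beta+\gamma$) to show that the only candidates with $b\ge2$ are $\beta^{2}$ (whence $\beta=\pi$, impossible), $\beta^{3}$ (impossible by \eqref{Eq-am-a4-deg3-list}) and $\beta^{2}\gamma$; and $\beta^{2}\gamma$ forces $\alpha=\beta$, i.e.\ the angle sum $2\beta+\gamma=2\pi$, contradicting Proposition~\ref{Prop-am-a4-be2ga}. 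Thus $\beta^{2}\cdots$ is not a vertex, hence neither is $\gamma\vert\gamma\cdots$ by the equivalence in the second picture of Figure~\ref{Fig-adj-am-a4}. A vertex carrying a $\beta$ is then either $\alpha\beta\gamma$ or $\beta\gamma^{k}$ with $k\ge2$, and the latter contains $\gamma\vert\gamma$; a $\beta$-free vertex $\alpha^{a}\gamma^{c}$ is excluded by the same identity together with $\alpha>\tfrac23\pi$ (which rules out $\alpha^{a}$ and, in the remaining case $\alpha^{2}\gamma$, again gives $\alpha=\beta$ and Proposition~\ref{Prop-am-a4-be2ga}) and the absence of $\gamma\vert\gamma\cdots$. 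Hence $\AVC=\{\alpha\beta\gamma\}$.

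\textbf{Step 2: the tiling is the prism.} Now every vertex has degree $3$, with one $m$-gon corner and two rhombus corners; in particular $\alpha^{2}\cdots$ is never a vertex, so every neighbour of an $m$-gon is a rhombus. Fix an $m$-gon $T_1$ (one exists because $\alpha$ occurs). Each of its $m$ vertices is $\alpha\beta\gamma$, so across each edge of $T_1$ lies a rhombus, and consecutive such rhombi share an edge: they form a ring $R_1,\dots,R_m$, each $R_j$ having exactly one outer edge. At an outer vertex, $R_j$ and $R_{j+1}$ meet at an $\alpha\beta\gamma$ whose single $\alpha$ comes from the $m$-gon across $R_j$'s outer edge; since that $m$-gon is also the one meeting the other end of the same outer edge, running around the ring shows all these $m$-gons coincide in one $m$-gon $T_2$, which is then itself bounded by $R_1,\dots,R_m$. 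So the tiling is the prism of two $m$-gons and $m$ rhombi, i.e.\ the family of Figure~\ref{Fig-am-a4-Tilings-prism}; this is exactly the deduction already drawn, for $m=5$, in the first picture of Figure~\ref{Fig-a5-a4-Tiling-albega}.

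\textbf{Step 3: existence, and the main obstacle.} No fresh existence argument is needed: the geometric realisation at the end of the proof of Proposition~\ref{Prop-a5-a4-albega} already subdivides the cylinder between two congruent regular $m$-gons, placed symmetrically about the equator, into $m$ rhombi for every $m\ge3$ and every $r\in(\cot^{-1}\sin\tfrac{\pi}{m},\tfrac12\pi)$. I expect the one point that needs genuine care to be the closing-up in Step~2 --- that the $m$-gon across the outer edge of $R_j$ is the same tile that supplies $\alpha$ at both of its endpoints --- which I would pin down through the ``exactly one $\alpha$ per vertex'' observation; once that is in place, the rest is routine bookkeeping parallel to Proposition~\ref{Prop-a5-a4-albega}.
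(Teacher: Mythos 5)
Your proposal is correct and takes essentially the same route as the paper: rule out $\beta^2\cdots$ and hence $\gamma\vert\gamma\cdots$, conclude $\AVC=\{\alpha\beta\gamma\}$, assemble the prism, and invoke the geometric realisation already recorded in the proof of Proposition \ref{Prop-a5-a4-albega}. The only local difference is that you eliminate the $\beta$-free vertices $\alpha^{a}\gamma^{c}$ by direct enumeration using the identity $(a-1)\alpha+(b-1)\beta+(c-1)\gamma=0$ together with $\alpha>\tfrac{2}{3}\pi$, whereas the paper dispatches them in one stroke with the Counting Lemma (Lemma \ref{Lem-Counting}); both arguments are sound.
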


\begin{proof} By $\beta+\gamma > \pi$, we get $R(\beta^2)<2\gamma$. Then $\alpha,\beta>\gamma$ implies $\beta^2\cdots=\alpha\beta^2, \beta^3, \beta^2\gamma$. By \eqref{Eq-am-a4-deg3-list}, we know that $\alpha\beta^2, \beta^3$ are not vertices and by Proposition \ref{Prop-am-a4-be2ga} we also know that $ \beta^2\gamma$ is not a vertex. Hence $\beta^2\cdots$ is not a vertex.

By no $\beta^2\cdots$, we know that $\gamma\vert\gamma\cdots$ is not a vertex. Then $\beta\gamma^c$ is not a vertex. By $\alpha\beta\gamma$ and no $\beta^2\cdots, \beta\gamma^c$, we get $\beta\cdots=\beta\gamma\cdots=\alpha\beta\gamma\cdots=\alpha\beta\gamma$. Counting Lemma implies $\gamma\cdots=\alpha\beta\gamma$. So the other vertices consist of only $\alpha$'s. For $m\ge6$, we have $\alpha>(1-\frac{2}{m})\pi \ge \frac{2}{3}\pi$ and $\alpha^a$ is not a vertex. Hence we have the vertices below,
\begin{align*}
\AVC = \{ \alpha\beta\gamma \}.
\end{align*}
Starting at an $\alpha\beta\gamma$, by $\alpha\beta\cdots=\alpha\gamma\cdots=\beta\gamma\cdots=\alpha\beta\gamma$ we determine a tiling in the first picture of Figure \ref{Fig-a5-a4-Tilings-albega}. The tiling for $m=6$ is given in the second picture.

\begin{figure}[h!] 
\centering
\begin{tikzpicture}

\tikzmath{
\s=1;
\r=0.8;
\g=6;
\ph=360/\g;
\x=\r*cos(\ph/2);
\y=\r*sin(\ph/2);
\rr=2*\y/sqrt(2);
\h=4;
\th=360/\h;
\xx=\rr*cos(\th/2);
\ps=360/7;
}

\begin{scope}[] 

\foreach \a in {0,1,2,5,6} {

\draw[rotate=\a*\ps]
	(90:\r) -- (90:2*\r)
;

\node at (90+\a*\ps:0.8*\r) {\small $\alpha$};

\node at (90-0.225*\ps+\a*\ps:1.2*\r) {\small $\beta$};
\node at (90+0.2*\ps+\a*\ps:1.2*\r) {\small $\gamma$};

\node at (90-0.15*\ps+\a*\ps:1.75*\r) {\small $\gamma$};
\node at (90+0.15*\ps+\a*\ps:1.75*\r) {\small $\beta$};

\node at (90+\a*\ps:2.2*\r) {\small $\alpha$};

}

\draw (0,0) circle (\r);

\draw (0,0) circle (2*\r);

\node at (270:1.5*\r) {\small $\cdots$};

\end{scope} 

\begin{scope}[xshift=5.5*\s cm]

\foreach \p in {0,...,5} {

\draw[rotate=\p*\ph]
	(90-1*\ph:\r) -- (90:\r)
	(90:\r) -- (90:2*\r)
	(90-1*\ph:2*\r) -- (90:2*\r)
;

\node at (90+\p*\ph:0.7*\r) {\small $\alpha$};

\node at (90-0.18*\ph+\p*\ph:1.175*\r) {\small $\beta$};
\node at (90+0.15*\ph+\p*\ph:1.175*\r) {\small $\gamma$};

\node at (90-0.1*\ph+\p*\ph:1.65*\r) {\small $\gamma$};
\node at (90+0.15*\ph+\p*\ph:1.625*\r) {\small $\beta$};

\node at (90+\p*\ph:2.2*\r) {\small $\alpha$};

}

\end{scope}

\end{tikzpicture}
\caption{The dihedral tilings by rhombi and $m$-gon ($m\ge6$) with $\alpha\beta\gamma$}
\label{Fig-a5-a4-Tilings-albega}
\end{figure}
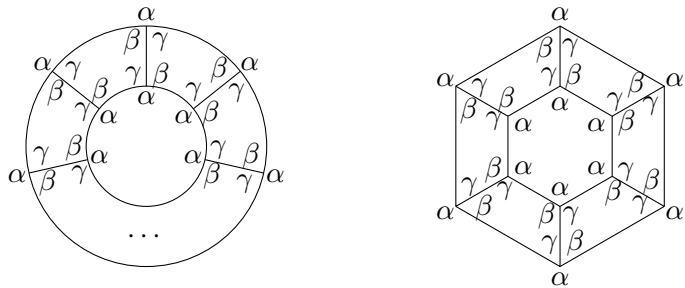

The existence of these tilings has been discussed in the Geometric Realisation of Proposition \ref{Prop-a5-a4-albega}.
\end{proof}

\end{document}